\def\r{\rightarrow}
\newcommand{\fdem}{\hspace*{\fill}~$\Box$\par\endtrivlist\unskip}
\newcommand{\E}{\mathbb{E}}     
\renewcommand{\P}{\mathbb{P}}     
\renewcommand{\L}{\mathbb{L}}
\newcommand{\N}{\mathbb{N}}     
\newcommand{\Z}{\mathbb{Z}}
\newcommand{\R}{\mathbb{R}}     
\newcommand{\C}{\mathbb{C}}
\newcommand{\diag}{\mathop{\rm diag}}
\renewcommand{\r}{\mathop{\rightarrow}}
\newcommand{\cB}{\mbox{$\cal B$}}
\newcommand{\cC}{\mbox{$\cal C$}}
\newcommand{\cE}{\mbox{$\cal E$}}
\newcommand{\cF}{\mbox{$\cal F$}}
\newcommand{\cG}{\mbox{$\cal G$}}
\newcommand{\cH}{\mbox{$\cal H$}}
\newcommand{\cR}{\mbox{$\cal R$}}
\newtheorem{theo}{Theorem}
\newtheorem{pro}{Proposition}
\newenvironment{proof}[1]{\textit{Proof#1.\,}}{\fdem}
\newtheorem{lem}{Lemma}
\newtheorem{rem}{Remark}
\newtheorem{cor}{Corollary}
\newtheorem{alem}{Lemma}[section]
\newtheorem{asublem}{Sublemma}[section]
\newtheorem{apro}{Proposition}[section]
\newtheorem{arem}{Remark}[section]
\title{Multidimensional renewal theory in the non-centered case. Application to strongly ergodic Markov chains. }
\author{D. GUIBOURG and L. HERV\'E \footnote{Universit\'e
Europ\'eenne de Bretagne, I.R.M.A.R. (UMR-CNRS 6625), Institut National des Sciences 
Appliqu\'ees de Rennes. Denis.Guibourg@ens.insa-rennes.fr, Loic.Herve@insa-rennes.fr} 
 }
\begin{document}
\maketitle

AMS subject classification : 60J10-60K05-47A55

Keywords : Fourier techniques, spectral method. 

\vskip 2mm

\noindent{\bf Abstract.} {\scriptsize \it Let $(S_n)_{n\geq0}$ be a $\R^d$-valued random walk ($d\geq2$). Using Babillot's method \cite{bab}, we give general conditions on the characteristic function of $S_n$ under which $(S_n)_{n\geq0}$ satisfies the same renewal theorem as in the independent case (i.e.~the same conclusion as in the case when the increments of $(S_n)_{n\geq0}$ are assumed to be independent and identically distributed). 
This statement is applied to additive functionals of strongly ergodic Markov chains under the non-lattice condition and (almost) optimal moment conditions.}

\vskip 1mm

\section{Introduction}
Let $(S_n)_{n\geq0}$ be a $\R^d$-valued random walk. Renewal theory gives the behavior, as  $\|a\|\r+\infty$, of the positive measures  $U_a(\cdot)$ defined on the Borel $\sigma$-algebra $B(\R^d)$ of $\R^d$ as follows~: 
\begin{equation} \label{intro-Ua}
\forall a\in\R^d,\ \forall A\in B(\R^d),\quad U_a(A) = \sum_{n=1}^{+\infty}\E\big[1_A(S_n-a)\big]. 
\end{equation}
To define the renewal measure $U_a(\cdot)$, the sequence $(S_n)_{n\geq0}$ has to be transient: for independent or  Markov random walks, this leads to consider the following cases: \\[0.12cm]{\it 
1. $d\geq3$ and $\E[S_1]=0$ (centered case),  \\[0.12cm]
2. $d\geq1$ and $\E[S_1]\neq0$ (non-centered case): in this case, the behavior of $U_a(\cdot)$ is specified when $\|a\|\r+\infty$ in the direction of $\E[S_1]$. } \\[0.12cm]
The behavior of $U_a(\cdot)$ also depends on the usual lattice or non-lattice conditions. 

This work is the continuation of \cite{denis} (case $d=1$) and \cite{guiher} (centered case in dimension $d\geq3$). More specifically, in this paper, we consider the non-centered case in dimension $d\geq2$, and we present some general assumptions involving the characteristic function of $S_n$, 
under which we have the same conclusion as in the classical renewal theorem for random walks with independent and identically distributed (i.i.d.) increments. By using the weak spectral method \cite{fl}, this result is then applied to additive functionals of strongly ergodic Markov chains. This work is greatly inspired by Babillot's paper \cite{bab}. Before presenting our results, we give a brief review of well-known multidimensional renewal theorems in both independent and Markov settings, as well as some general comments on Fourier's method. To that effect we introduce some notations which will be repeatedly used afterwards:  

\noindent -$\ $ $\langle\cdot,\cdot\rangle$ is the canonical scalar product on $\R^d$, \\
-$\ $ $\|\cdot\|$ is the associated euclidean norm on $\R^d$, \\
-$\ $ $L_d(\cdot)$ is the Lebesgue-measure on $\R^d$,\\ 
-$\ $ $\cC_c(\R^d,\C)$ is the set of complex-valued continuous compactly supported functions on $\R^d$, \\
-$\ $ the Fourier transform of any Lebesgue-integrable function $f : \R^d\r\C$ is defined as follows: 
$\forall t\in\R^d,\ \hat{f}(t) := L_d( e^{-i\langle t, \cdot\rangle}f)$, \\
-$\ $ $\cH$ is the set of complex-valued continuous Lebesgue-integrable functions on $\R^d$, whose Fourier transform is compactly supported and infinitely differentiable on $\R^d$, \\
-$\ $ for any $R>0$, we denote by $B_R:=B(0,R)$ the open ball: 
$B(0,R) := \{t\in\R^d : \|t\|<R\}$, 
-$\ $ for any $0<r<b$, we denote by $K_{r,b}$ the annulus $K_{r,b} := \{t\in\R^d : r<\|t\|<b\}$. 

\noindent {\it Renewal theory for random walks with i.i.d.~non-centered increments.} \\[0.1cm]
Let $(X_n)_{n\geq1}$ be a sequence of i.i.d.~non-centered random variables (r.v) taking values in $\R^d$, and let $S_n = X_1+\ldots+X_n$. In dimension $d\geq2$, the renewal theorem was first established by Ney and Spitzer \cite{ney-spit} in the lattice case. Extension to the non-lattice case was obtained by Doney \cite{doney} under Cramer's condition, and by Stam \cite{stam} under the weaker non-lattice condition. Setting $m_d := \max(\frac{d-1}{2},2)$, Stam's statement writes as follows:   
\begin{equation} \label{intro-ren-decentre} 
\E\big[\|X_1\|^{m_d}\big] < \infty,\ \vec{m} := \E[X_1]\neq0\ \Rightarrow\ \forall g\in\cC_c(\R^d,\R),\  \lim_{\tau\r+\infty} \tau^{\frac{d-1}{2}}\, U_{\tau\vec m}(g) = C\, L_d(g)
\end{equation}
where $C$ is a positive constant depending on the first and second moments of $X_1$. In the lattice case, Property~(\ref{intro-ren-decentre}) still holds, but $L_d(\cdot)$ must be replaced with the product of counting  and Lebesgue measures both defined on some sublattices of $\R^d$. Stam's proof is based on the local limit theorem (LLT) due to Spitzer \cite[Th.~P7.10]{spitzer}\footnote{This LLT, established by Fourier techniques, extends to $d\geq 2$ the one-dimensional result of \cite{smith}.}.  More precisely, this LLT is applied  to study the difference  
$$\mbox{$\sum_{n=1}^{+\infty}n^{(d-1)/2}\big(\E[g(S_n-a)] - \E[g(T_n-a)]\big)$},$$ 
where the r.v.~$T_n$ are defined as the partial sums of a i.i.d.~sequence of Gaussian r.v.~having the same first and second moments as $X_1$. Then (\ref{intro-ren-decentre}) is deduced from the Gaussian case. 

\noindent {\it Fourier techniques in renewal theory (Breiman's method).} \\[0.1cm]
The weak convergence in (\ref{intro-ren-decentre}) can be established by investigating the behavior of $U_a(h)$ for $h\in\cH$. In fact the inverse Fourier formula gives (without any assumption on the model):  
\begin{equation} \label{intro-fle-fourier}
\mbox{$\forall h\in\cH,\quad \E\big[h(S_n-a)\big] = 
(2\pi)^{-d}\int_{\R^d}\hat h(t)\, \E[e^{i\langle t,S_n \rangle}]\, e^{-i\langle t,a \rangle}\, dt$}.
\end{equation}
This is the starting point of Fourier's method in probability theory. In the i.i.d.~case, using~(\ref{intro-fle-fourier}) and the formula $\E[e^{i\langle t,S_n \rangle}] = \E[e^{i\langle t,X_1 \rangle}]^n$, the potential $U_a(h)$ given by (\ref{intro-Ua}) is equal to the following integral: 
\begin{equation} \label{intro-Ua-fourier}
\mbox{$U_a(h) = I(a) := 
(2\pi)^{-d}\int_{K}\hat h(t)\, \frac{\phi(t)}{1-\phi(t)} \, e^{-i\langle t,a \rangle}\, dt\quad$ with $\quad \phi(t) = \E[e^{i\langle t,X_1 \rangle}]$},
\end{equation}
where $K$ is the support of $\hat h$. More precisely, since $\E[X_1]\neq0$ and $d\geq 2$, the integrand in $I(a)$ is integrable at $0$. Thus $I(a)$ is well-defined provided that $|\phi(t)| < 1$ for all $t\neq0$: this is the non-lattice condition. Fourier's method also applies to the lattice case by considering a periodic summation in (\ref{intro-Ua-fourier}). The renewal theorem then follows from the study of the integrals $I(\tau\vec m)$ when  $\tau\r+\infty$. This method, introduced by Breiman \cite{bre} in dimension $d=1$, was extended to $d\geq 2$ by Babillot \cite{bab} in the general setting of Markov random walks (see below). 

\noindent {\it Renewal theory for Markov random walks.} \\[0.1cm]
Let $(E,\cE)$ denote a measurable space, and let $(X_n,S_n)_{n\in\N}$ be an $E\times \R^d$-valued Markov random walk (MRW), namely: $(X_n,S_n)_{n\in\N}$ is a Markov chain and its transition kernel $P$ satisfies the following additive property (in the second component): 
\begin{equation} \label{def-mrw}
\forall(x,s)\in E\times\R^d,\ \forall A\in\cE,\ \forall B\in B(\R^d),\ \ \ P\big((x,s),A\times B\big) = 
P\big((x,0),A\times (B-s)\big).
\end{equation}
As usual we set $S_0=0$. When $(X_n)_{n\geq0}$ is strongly ergodic and $S_1$ is non-centered, Babillot gives in \cite{bab} some (operator-type) moment and non-lattice conditions for the additive component $(S_n)_n$ to satisfy the renewal conclusion in~(\ref{intro-ren-decentre}). Recall that the strong ergodicity condition states that the transition kernel $Q$ of $(X_n)_{n\geq0}$ admits an invariant probability measure $\pi$, and that there exists a Banach space $(\cB,\|\cdot\|_{\cal B})$, composed of $\pi$-integrable functions on $E$ and containing the function $1_E$, such that $\pi$ defines a continuous linear form on $\cB$ and 
\begin{equation} \label{K1}
\lim_{n\r+\infty} \sup_{f\in{\cal B},\, \|f\|\leq1} \|Q^nf - \pi(f)1_E\|_{\cal B} = 0.
\end{equation}
The proof in \cite{bab} is based on Fourier techniques and the usual Nagaev-Guivarc'h spectral method involving the semi-group of Fourier operators associated with $(X_n,S_n)_{n\in\N}$, namely: 
$$\forall n\in\N,\ \forall t\in\R^d,\ \forall x\in E,\  \ 
\big(Q_n(t)f\big)(x) := \E_{(x,0)}\big[e^{i \langle t , S_n \rangle} f(X_n)\big],$$ 
where $\E_{(x,0)}$ denotes the expectation under the initial distribution $(X_0,S_0)\sim\delta_{(x,0)}$. The operators  $Q_n(t)$ act (for instance) on the space of bounded measurable functions $f : E\rightarrow{\mathbb C}$. The semi-group property writes as follows: $\forall(m,n)\in\N^2,\ Q_{m+n}(t) = Q_m(t)\circ Q_n(t)$. In particular we have $Q_n(t) = Q_1(t)^n$. This property is the substitute for  MRWs of the formula $\E[e^{i\langle t,S_n \rangle}] = \E[e^{i\langle t,X_1 \rangle}]^n$ of the i.i.d.~case. 

\noindent {\it The content of the paper.} \\[0.1cm]
\indent Section~\ref{sect-met-fourier} focuses on Fourier's method. More specifically we consider a general sequence $(X_n,S_n)_{n\in\N}$ (not necessarily a MRW) of random variables taking values in $E\times\R^d$. In substance our  non-centered condition writes as follows: 
$\vec m := \lim_n\E[S_n]/n$ exists in $\R^d$ and is nonzero. Let $f : E\r [0,+\infty)$ be such that $\E[f(X_n)] < \infty$ for every $n\geq 1$. Under a general hypothesis, called $\cR(m)$, on the functions $t\mapsto \E\big[f(X_n)\, e^{i\langle t, S_n\rangle}\big]$, Theorem~\ref{ren-theo} states that there exists some positive constant $C$ (specified later) such that we have 
$$\forall g\in\cC_c(\R^d,\R),\quad \tau^{\frac{d-1}{2}}\sum_{n=1}^{+\infty}\E\big[f(X_n)\, g(S_n - a)\big]\longrightarrow C\, L_d(g)$$
when $a := a(\tau)\in\R^d$ goes to infinity "around the direction $\vec m$" in the sense (specified later) defined in \cite{thirion} (for instance, take $a:=\tau\vec m$ with $\tau\r +\infty$). Actually Hypothesis~$\cR(m)$, introduced in \cite{guiher} (centered case), contains the tailor-made conditions to prove renewal theorems via Fourier's method. The proof of Theorem~\ref{ren-theo} borrows the lines of \cite{bab} with the following improvements. First, the distribution-type arguments and the modified Bessel functions used in \cite{bab} are replaced  with elementary computations. Second, the (asymmetric) dyadic decomposition, partially developed in \cite{bab,bab2} to study integrals of type (\ref{intro-Ua-fourier}), is detailed in this work.

Section~\ref{sect-markov} is devoted to the Markov context. Specifically, we assume that $(X_n)_{n\in\N}$ is a Markov chain satisfying one of the three following classical strong ergodicity assumptions: \\[0.15cm] {\it 
- $\ (X_n)_{n\geq0}$ is $\rho$-mixing (see \cite{rosen}), \\
- $\ (X_n)_{n\geq0}$ is $V$-geometrically ergodic (see \cite{mey}), \\ 
- $\ (X_n)_{n\geq0}$ is a strictly contractive Lipschitz iterative model (see \cite{duf}).} \\[0.15cm] 
Let $\xi$ be a $\R^d$-valued measurable function, and let $S_n = \xi(X_1)+\ldots+\xi(X_n)$. Then 
the sequence $(X_n,S_n)_{n\in\N}$ is a special instance of MRW. As already used in \cite{guiher}, 
the weak spectral method \cite{fl} allows us to reduce Hypothesis~$\cR(m)$ to a non-lattice condition and to some (almost) optimal moment conditions on $\xi$, which are much weaker than those in \cite{bab}. 

Theorem~\ref{ren-theo} should supply further interesting applications, not only in Markov models but also in dynamical systems associated with quasi-compact Perron-Frobenius operators. On that subject, recall that the renewal theorems yield the asymptotic behavior of counting functions arising in the geometry of groups, as already developed for instance in \cite{lalley,broi,thirion}. 
\section{Renewal theory in the non-centered case (Fourier method)} \label{sect-met-fourier}
For any $A\subset\R^d$, $g : A\r\C$, and $\tau\in(0,1]$, we define the following quantities in $[0,+\infty]$: 
$$\big\|g\big\|_{0,A} = \sup_{x\in A}|g(x)|\ \ \ \mbox{and}\ \ \ 
\big[g\big]_{\tau,A}:=\sup\big\{\frac{|g(x)-g(y)|}{\|x-y\|^{\tau}},\ (x,y)\in A^2,\ x\not=y\big\}.$$
We say that $g$ is $\tau$-Hölder on $A$ if $[g]_{\tau,A}<\infty$. Moreover, for any open subset ${\cal O}$ of $\R^d$ and every $m\in\N^*$, we denote by $\cC_b^m({\cal O},\C)$ the vector space composed of $m$-times continuously differentiable functions $f : {\cal O}\r \C$ with bounded 
partial derivatives on ${\cal O}$. If $m\in(0,+\infty)\setminus\N$, we set $\tau := m-\lfloor m\rfloor$ where $\lfloor m\rfloor$ is the integer part of $m$, and we denote by $\cC_b^m({\cal O},\C)$ the vector space composed of functions $f : {\cal O}\r \C$ satisfying the three following conditions: { \it \\[0.12cm]
\indent $f$ is $\lfloor m\rfloor$-times continuously differentiable on ${\cal O}$, \\[0.12cm] 
\indent Each partial derivative of order $j=0,\ldots,\lfloor m\rfloor$ of $f$ is bounded on ${\cal O}$, \\[0.12cm]
\indent Each partial derivative of order $\lfloor m\rfloor$ of $f$ is $\tau$-hölder on ${\cal O}$. } 

\noindent Define $\nabla f := (\frac{\partial f}{\partial x_i})_{1\leq i\leq d}$ if $m\geq 1$, and  $Hess\, f := (\frac{\partial^2 f}{\partial x_i\partial x_j})_{1\leq i,j\leq d}$ (Hessian matrix) if $m\geq2$. 

Let $(\Omega,\cF,\P)$ be a probability space. We denote by $(E,\cE)$ a measurable space, and we consider a sequence $(X_n,S_n)_{n\geq0}$ of $E\times\R^d$-valued random variables defined on $\Omega$. Throughout, we assume $d\geq 2$. 
 
\noindent{\bf Hypothesis $\cR(m)$.}  {\it Given $m\in[2,+\infty)$ and $f : E\r[0,+\infty)$  a measurable function satisfying
\begin{equation} \label{cond-f-Rm}
\forall n\geq 1,\quad \E[f(X_n)] < \infty, 
\end{equation}
we say that Hypothesis $\cR(m)$ holds if the following conditions are fulfilled: }
\begin{enumerate}[(i)] {\it  
\item There exists $R>0$ such that, for all $t\in B_R$ and all $n\geq 1$, we have: 
\begin{equation} \label{X_n-S-n}
\E\big[f(X_n)\, e^{i\langle t, S_n\rangle}\big] = \lambda(t)^n\, L(t) + R_n(t), 
\end{equation}
where $\lambda(0)=1$, the functions $\lambda(\cdot)$ and $L(\cdot)$ are in $\cC_b^m\big(B_R,\C\big)$, and the series $\sum_{n\geq1}R_n(\cdot)$ uniformly converges on the open ball $B_R$ and defines a function in $ \cC_b^m\big(B_R,\C\big)$. 
\item  For all $0<r<b$, the series $\sum_{n\geq1}\E\big[f(X_n)\, e^{i\langle \cdot, S_n\rangle}\big]$ uniformly converges on the annulus $K_{r,b}$ and  defines a function in $ \cC_b^m\big(K_{r,b},\C\big)$.}
\end{enumerate}

Under Hypothesis $\cR(m)$, we set 
$$\vec m := -i\nabla\lambda(0) \quad \text{ and } \quad \Sigma := -Hess\, \lambda(0).$$
Below we assume that $\vec m \neq 0$: this is our non-centered condition. In fact, under Hypothesis~$\cR(m)$  and additional mild conditions (see \cite[Prop.~1]{guiher}), we have $\vec m = \lim_n\E[S_n]/n$, so that $\vec m$ may be viewed as a nonzero mean vector in $\R^d$. 
Below we also assume that the symmetric matrix $\Sigma$ is positive-definite. In the Markov setting of Section~\ref{sect-markov}, $\Sigma$ is linked to some covariance matrix, see (\ref{sigma}). 

\noindent{\bf  Hypothesis (H).} {\it Setting $m_d:=\max(\frac{d-1}{2},2)$, there exists a real number $m > m_d$ such that Hypothesis~$\cR(m)$ holds. We have $L(0)\not=0$, $\vec m \neq 0$, and $\Sigma := -Hess\, \lambda(0)$ is positive-definite.}
\begin{theo} \label{ren-theo} 
Assume that Hypothesis {\bf (H)} holds. Then, for each function $\mathfrak{a}\,:[0,+\infty)\to\R^d$ such that  
\begin{equation} \label{atau}
\mathfrak{A} := \lim_{\tau\to +\infty}\frac{\mathfrak{a}(\tau)-\tau \vec m}{\sqrt{\tau}}\ \ \mbox{exists in}\ \R^d,
\end{equation}
the family $\{V_{\tau}(\cdot),\, \tau\in(0,+\infty)\}$  of positive measures on $\R^d$ defined by 
\begin{equation} \label{serie-renou}
\forall A\in B(\R^d),\quad V_{\tau}(A) := (2\pi\tau)^{\frac{d-1}{2}}\sum_{n=1}^{+\infty}\E\big[f(X_n)\, 1_A(S_n - \mathfrak{a}(\tau))\big]
\end{equation}
weakly converges to $C\, L_d(\cdot)$ as $\tau \to +\infty$, where $C := C(L,\vec{m},\Sigma,\mathfrak{A})\in(0,+\infty)$ is given by: 
$$C(L,\vec{m},\Sigma,\mathfrak{A})\, :=\, L(0)\, \frac{\det S}{\|S\vec{m}\|}\, 
\exp\bigg(\frac{\big\langle S\vec{m}, S\mathfrak{A}\big\rangle^2 - \|S\vec{m}\|^2\|S\mathfrak{A}\|^2}{2\|S\vec{m}\|^2}\bigg)\quad \text{ with } \quad S:= \Sigma^{-\frac{1}{2}}.$$
\end{theo}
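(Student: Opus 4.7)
I would follow the Fourier-analytic route laid out in the introduction, inspired by \cite{bab}. \emph{Reduction to $h\in\cH$.} By a sandwich argument between $g\in\cC_c(\R^d,\R)$ and elements of $\cH$, together with a uniform upper bound on $V_\tau(\mathbf{1}_K)$ for $K$ compact (which itself follows from the same Fourier analysis applied to one positive $h\in\cH$), it suffices to show $V_\tau(h)\to C\, L_d(h)$ for every $h\in\cH$. For such an $h$, the inverse Fourier formula (\ref{intro-fle-fourier}) (extended with the weight $f(X_n)$) and the uniform convergence of the series in Hypothesis~$\cR(m)$ on $\supp\hat h$ justify the identity
\[
V_\tau(h)\;=\;\frac{(2\pi\tau)^{(d-1)/2}}{(2\pi)^d}\int_{\R^d}\hat h(t)\,\Phi(t)\,e^{-i\langle t,\mathfrak{a}(\tau)\rangle}\,dt,\qquad \Phi(t):=\sum_{n\geq 1}\E\bigl[f(X_n)\,e^{i\langle t,S_n\rangle}\bigr].
\]

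\emph{Isolating the singular part.} Via a $\cC^\infty$ partition of unity, decompose $\hat h=\chi\hat h+(1-\chi)\hat h$ with $\chi\equiv 1$ near $0$ and $\supp\chi\subset B_R$. Hypothesis~$\cR(m)$(i) rewrites $\Phi(t)=L(t)\lambda(t)/(1-\lambda(t))+G(t)$ on $B_R$ with $G\in\cC_b^m(B_R,\C)$, while Hypothesis~$\cR(m)$(ii) yields $\Phi\in\cC_b^m$ on any annulus $K_{r,b}$ containing $\supp((1-\chi)\hat h)$. The contributions from $G$ and from the annulus are thus Fourier transforms at $\mathfrak{a}(\tau)$ of compactly supported $\cC_b^m$ functions; integration by parts up to order $\lfloor m\rfloor$ combined with a H\"older estimate on the top derivative yields the bound $O(\|\mathfrak{a}(\tau)\|^{-m})=O(\tau^{-m})$, which, since $m>m_d\geq(d-1)/2$, is $o(\tau^{-(d-1)/2})$. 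Only the singular term
\[
J_\tau\;=\;\frac{(2\pi\tau)^{(d-1)/2}}{(2\pi)^d}\int_{B_R}\chi(t)\,\hat h(t)\,\frac{L(t)\,\lambda(t)}{1-\lambda(t)}\,e^{-i\langle t,\mathfrak{a}(\tau)\rangle}\,dt
\]
survives in the limit.

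\emph{Main obstacle: asymptotics of $J_\tau$.} The Taylor expansion $1-\lambda(t)=-i\langle\vec m,t\rangle+\tfrac12\langle\Sigma t,t\rangle+o(\|t\|^2)$ produces a simple-pole singularity at $t=0$ along the hyperplane $\langle\vec m,t\rangle=0$, so $J_\tau$ is a genuinely singular oscillatory integral --- this is the hard step. I would rescale $t=s/\sqrt\tau$: by (\ref{atau}) the phase becomes $\sqrt\tau\,\langle\vec m,s\rangle+\langle\mathfrak A,s\rangle+o(1)$, while $\tau(1-\lambda(s/\sqrt\tau))=-i\sqrt\tau\,\langle\vec m,s\rangle+\tfrac12\langle\Sigma s,s\rangle+o(1)$, producing a Gaussian-type integrand. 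To control uniformly the error between the exact integrand and this approximation throughout $B_R$, I would implement the asymmetric dyadic decomposition of \cite{bab,bab2}: in coordinates adapted to $\vec m$, slice $B_R$ into shells of transverse scale $2^{-k}$ and longitudinal scale $2^{-2k}$, which is precisely the parabolic scaling matched to the two orders of cancellation in $1-\lambda$. The regularity $m>m_d$ makes the shell-by-shell errors summable, and the estimate on each shell reduces (unlike in \cite{bab}, where distribution arguments and Bessel functions are invoked) to an elementary calculation.

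\emph{Identification of the constant.} Once the limit may be taken inside the rescaled integral, $\lim_\tau J_\tau$ reduces to an explicit Gaussian evaluation on $\R^d$. Changing variables $u=Ss$ with $S=\Sigma^{-1/2}$ (Jacobian $\det S$), orthogonally decomposing $u$ along $S\vec m$ and its $(d-1)$-dimensional orthogonal complement, integrating the longitudinal coordinate via the Sokhotski--Plemelj-type identity $(-ix+0^+)^{-1}=i\,\mathrm{p.v.}(1/x)+\pi\,\delta(x)$ (the sign being selected by the direction of $\mathfrak{a}(\tau)$, which the asymmetric dyadic decomposition is designed to record), and finally completing the square in the remaining $(d-1)$-dimensional transverse Gaussian against the linear term $\langle\mathfrak A,s\rangle$, produce in succession the factors $L(0)$, $\det S/\|S\vec m\|$, and $\exp\bigl((\langle S\vec m,S\mathfrak A\rangle^2-\|S\vec m\|^2\|S\mathfrak A\|^2)/(2\|S\vec m\|^2)\bigr)$. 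Multiplication by $\hat h(0)=L_d(h)$ then yields $C(L,\vec m,\Sigma,\mathfrak A)\,L_d(h)$, concluding the proof.
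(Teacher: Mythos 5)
Your outline matches the paper's architecture in its broad strokes: reduce to $h\in\cH$ by a weak-convergence sandwich argument, isolate the singular part by Hypothesis~$\cR(m)$, bound the regular remainders by Fourier decay of compactly supported $\cC_b^m$ functions (Proposition~\ref{Cm-elementaire-infini} in the paper), and control the pole via the asymmetric (parabolic) dyadic decomposition (Propositions~\ref{estimat-fourrier-q}--\ref{estimat-fourrier-q-bis}); you even identify the correct anisotropic scales $2^{-k}$ transverse, $2^{-2k}$ longitudinal, which is exactly $D_k(x)=(x_1/4^k,x'/2^k)$. The genuine divergence is in how you extract the asymptotics of the main singular integral $I_1(\mathfrak a(\tau))$. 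You propose rescaling $t=s/\sqrt\tau$ inside the integral and then evaluating the longitudinal pole by a Sokhotski--Plemelj-type identity; this is close to Babillot's original argument and is precisely the distribution-theoretic step the paper sets out to remove (cf.\ its introduction: ``the distribution-type arguments and the modified Bessel functions used in \cite{bab} are replaced with elementary computations''). The paper instead proves Proposition~\ref{equivalent} by comparison with the fixed explicit profile $H(v)=e^{-(\mu^2v_1^2+\|v'\|^4)/2}$: the difference $\Delta$ between $h/(-i\mu v_1+\|v\|^2)$ and the $H$-based reference is shown to have Fourier transform $o(\|a\|^{-(d-1)/2})$ by another application of Propositions~\ref{estimat-fourrier-q}--\ref{estimat-fourrier-q-bis}, while the reference integral $J_\mu(\tau)$ (Lemma~\ref{equivalent-particulier}) is computed in closed form by Fubini using $1/(-iv_1+\|v'\|^2)=\int_0^\infty e^{(iv_1-\|v'\|^2)u}\,du$, integrating out $v_1$ as an ordinary Gaussian integral, and only then rescaling the transverse coordinates by $\sqrt{2(\tau+\mathfrak u_1(\tau))}$. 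Your route should also lead to the stated constant, but note (a) you must justify dominated convergence for the rescaled oscillatory integral over the growing ball $B_{R\sqrt\tau}$ --- you delegate this to the dyadic decomposition, which is right in spirit but not spelled out, and (b) the sign selection in your Sokhotski--Plemelj step comes not from the dyadic decomposition but from $|\lambda(t)|<1$ on $B_R\setminus\{0\}$ (Remark~\ref{fonction-v}), which is what fixes the Laplace transform representation the paper uses in its place.
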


Condition~(\ref{atau}), introduced in \cite{thirion}, specifies what we called "around the direction $\vec m$" in Introduction. Obviously $\mathfrak{a}(\tau) = \tau \vec m$ satisfies (\ref{atau}).
\begin{rem} \label{rem-lattice}
Theorem~\ref{ren-theo} may be extended to the lattice case: Hypothesis $\cR(m)(ii)$ must be adapted, and $L_d(\cdot)$ is replaced with the product of the counting measure and the Lebesgue measure both associated with some sublattices of $\R^d$, see \cite[Sect.~2.5]{denis-these}.  
\end{rem}
The next subsections are devoted to the proof of  Theorem~\ref{ren-theo}. 
\subsection{Some reductions and Fourier techniques}
\noindent{\scriptsize $\bullet$} {\it Change of coordinates.} Let $\vec e_1$ denote the first vector of the canonical basis of $\R^d$, and let $T$ be any isometric linear map in $\R^d$ such that $T(\vec m)=\|\vec m\|\vec e_1$. Up to replace $S_n$ with $TS_n$, one may assume without loss of generality that $\vec{m}=\|\vec m\|\vec e_1$. 
This leads to replace $\lambda(\cdot),L(\cdot),R_n(\cdot),h(\cdot),\Sigma\ $ with 
$\ \lambda\circ T^{-1},L\circ T^{-1},R_n\circ T^{-1},h\circ T^{-1}, T\circ\Sigma\circ T^{-1}$. 

\noindent{\scriptsize $\bullet$} {\it The function $w$.} For all $x=(x_1,x_2,\ldots,x_d)\in\R^d$, we set $x' := (x_2,\ldots,x_d)\in\R^{d-1}$. The following function $w(\cdot)$ will play an important role: 
\begin{equation} \label{w} 
\forall x\in\R^d,\quad w(x)=-ix_1+\|x'\|^2.
\end{equation}
\begin{rem} \label{intdeunsurw} 
We have: $\forall x\in\R^d,\ |w(x)| \geq |x_1|^{3/4}\, \|x'\|^{1/2}$. Thus $1/w$ is integrable at 0. 
\end{rem}
\begin{rem} \label{fonction-v} 
Some simple facts on the function $\lambda(\cdot)$ of (\ref{X_n-S-n}) can be deduced from Hypothesis~{\bf (H)}. First, since $m_d\geq 2$, we have 
\begin{equation} \label{DL-lambda}
\mbox{$\lambda(t)=1+i\|\vec m\|t_1-\frac{1}{2}\langle \Sigma t,t\rangle + o(\|t\|^2)$}.
\end{equation}
Second, since $L(\cdot)$ is continuous on $B_R$ and $L(0)\not=0$, one may suppose (up to reduce $R$) that we have: $\forall t\in B_R,\ L(t)\not=0$. By Hypothesis~$\cR(m)(ii)$, the last property then implies that, for all $t\in B_R\setminus\{0\}$, the series  $\sum_{n\geq 1}\lambda(t)^n$ converges. Hence: 
$$\forall t\in B_R\setminus\{0\},\quad |\lambda(t)|<1.$$
Third the function 
$v_0(\cdot):=1-\lambda(\cdot)$ is in $ \cC_b^{m}\big(B_R,\C)$, and thanks to $-i\nabla\lambda(0)=\vec m=\|\vec m\|\vec e_1$, we have 
$$\forall j\in\{2,\ldots,d\},\quad \frac{\partial v_0}{\partial x_j}(0)=0.$$ 
Finally, up to reduce $R$, we deduce from (\ref{DL-lambda}) that there exist positive constants $\alpha$, $\beta$ 
such that: 
$$\forall t\in B_R,\quad \alpha|w(t)|\leq |v_0(t)|\leq \beta|w(t)|.$$
\end{rem}
\noindent{\scriptsize $\bullet$} {\it Use of the space $\cH$.} Thanks to the well-known results on weak convergence of positive measures (see \cite{bre}), Theorem~\ref{ren-theo} will hold provided that we prove the following property:
\begin{equation} \label{ren-h} 
\forall h\in\cH,\quad \lim_{\tau\to +\infty}\,(2\pi \tau)^{\frac{d-1}{2}}\sum_{n=1}^{+\infty}\E\big[\, f(X_n)\, h(S_n-\mathfrak{a}(\tau))\, \big] =
C(L,\vec{m},\Sigma,\mathfrak{A})\,\int_{\R^d} h(x)dx. 
\end{equation}
\noindent{\scriptsize $\bullet$} {\it Integral decomposition.}  Let $h\in\cH$ be fixed and let $b>0$ such that $\hat h(t)=0$ when $\|t\|>b$. Next consider any real number $\rho$ such that $0<\rho<\min(R,b)$ and any function $\chi\in \cC_b^\infty(\R^d,\R)$ compactly supported in $B_R$ such that $\chi(t)=1$ when $\|t\|\leq \rho$.  
For each $t\in\R^d$ we set $E_n(t) := \E[f(X_n)\, e^{i\langle t, S_n\rangle}]$. The inverse Fourier formula gives  
$$(2\pi)^{d}\, \E\big[f(X_n)\, h(S_n-a)\big] = 
\int_{\R^d}\hat h(t)\, E_n(t)\, e^{-i\langle t,a \rangle}\, dt.$$
Under Hypothesis {\bf (H)}, we prove below that, for every $a\in\R^d$, the following series  
$$I(a) := (2\pi)^d\, \sum_{n=1}^{+\infty}\E\big[f(X_n)\, h(S_n-a)\big]$$
converges, and that $I(a)$ decomposes as the sum of three integrals called $E(a)$, $E_1(a)$ and $I_1(a)$. The integrals $E(a)$ and $E_1(a)$ are error terms, while $I_1(a)$ is the main part of $I(a)$. In fact we have 
\begin{equation} \label{dec-int-fourier-E(a)}
I(a) = E(a) + \int_{B_R} \chi(t)\, \hat h(t)\, \frac{\lambda(t)}{1-\lambda(t)}\, L(t))\, e^{-i\langle t,a\rangle}\, dt
\end{equation}
with  
$$E(a) := \int_{\R^d} \hat h(t) \bigg( 1_{B_R}(t)\, \chi(t)\sum_{n\geq1}R_n(t) + 
1_{K_{\rho,b}}(t)\big(1-\chi(t)\big)\sum_{n\geq1}E_n(t)\bigg)\, e^{-i\langle t,a\rangle}\, dt.$$
Next we obtain 
\begin{equation} \label{dec-int-fourier}
I(a) = E(a) + E_1(a) + I_1(a) 
\end{equation}
with 
\begin{eqnarray}
E_1(a) &:=& 
\int_{\|t\| \leq R} \chi(t)\, \frac{\hat h(t)\lambda(t)\, L(t) - \hat h(0)\, L(0)}{1-\lambda(t)}\, 
e^{-i\langle t,a\rangle}\, dt \nonumber \\
&\ & 
\ +\ \ \hat h(0)\, L(0)\, \int_{\|t\| \leq R} \chi(t)\, \frac{\lambda(t) - 1-i\|\vec m\|t_1 + \frac{1}{2} \langle \Sigma t,t\rangle}
{(1-\lambda(t))(-i\|\vec m\|t_1+\frac{1}{2}\langle \Sigma t,t\rangle)}\, e^{-i\langle t,a\rangle}\, dt \label{E1}
\end{eqnarray}
and 
$$I_1(a) :=  
\, \hat h(0)\, L(0)\, \int_{\|t\| \leq R} \frac{\chi(t)}{-i\|\vec m\|t_1+\frac{1}{2}\langle \Sigma t,t\rangle}\, e^{-i\langle t,a\rangle}\, dt.$$

Such equalities are established in \cite[p.~389]{guiher} (centered case). By using Hypothesis {\bf (H)}, the proof of (\ref{dec-int-fourier-E(a)}) borrows the same lines. To obtain (\ref{dec-int-fourier}), use the fact that for $\|t\| \leq R$, $t\not=0$, we have $|\sum_{n=1}^N\lambda(t)^n| \leq 2/(b|w(t)|)$, and the fact that $1/w$ is integrable at 0 (cf.~Remarks~\ref{intdeunsurw}-\ref{fonction-v}).

Property~(\ref{ren-h}) then follows from (\ref{dec-int-fourier}) and the next properties (\ref{J(a)-K(a)}) (\ref{I1(a)-I3(a)}) and (\ref{I2(a)}).  
\subsection{Study of the first error term $E(a)$} \label{sub-Ja-Ka}
Here we prove that we have when $\|a\| \r+\infty$: 
\begin{equation} \label{J(a)-K(a)} 
E(a) = o(\|a\|^{-\frac{d-1}{2}}). 
\end{equation}
For $u\in \cC_b^ m(\R^d,\C)$ and $\alpha= (\alpha_1,\ldots,\alpha_d)\in\N^d$ such that $|\alpha|:=\sum_{i=1}^d\alpha_i \leq \lfloor m\rfloor$, we denote by  $\displaystyle{\partial^{\alpha}}$ the derivative operator defined by~:  
$$\partial^{\alpha} := \frac{\partial^{\vert\alpha\vert}}{\partial x_1^{\alpha_1}\ldots \partial x_d^{\alpha_d}} = \partial_1^{\alpha_1}\ldots\partial_d^{\alpha_d}\ \ \ \mbox{where}\ \ \ \partial_j := \frac{\partial}{\partial x_j}.$$
The following proposition is classical. Let ${\cal O}$ be a bounded open subset of $\R^d$. 
\begin{pro} \label{Cm-elementaire-infini} Let $m\in(0,+\infty)$ and $\tau=m-\lfloor m\rfloor$. Assume that $u$ is a function in $\cC_{b}^{\lfloor m\rfloor}(\R^d,\C)$ compactly supported in $\overline{{\cal O}}$ and that its restriction to ${\cal O}$ is in $\cC_b^m(\cal O,\C)$. Then the following properties hold:
\begin{enumerate}[(i)]
	\item $u\in\cC_b^m({\R^d},\C)$, and for $\alpha\in\N^d$, $|\alpha|=\lfloor m\rfloor$, we have:  $[\partial^{\alpha}u]_{\tau,\R^d}=[\partial^{\alpha}u]_{\tau,\overline{\cal O}} = 
[\partial^{\alpha}u]_{\tau,\cal O}$
\item $\exists C\in(0,+\infty),\ \forall a\in\R^d, \quad \|a\|^m|\hat{u}(a)|\leq C\big(\|u\|_{0,\R^d}+\sum_{|\alpha|=\lfloor m\rfloor}[\partial^{\alpha}u]_{\tau,\R^d}\big)$. 
\end{enumerate}
\end{pro}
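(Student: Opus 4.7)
\textbf{Proof plan for Proposition~\ref{Cm-elementaire-infini}.}

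For part (i), the plan is to show that the Hölder seminorm of $\partial^{\alpha}u$ for $|\alpha|=\lfloor m\rfloor$ survives when one passes from ${\cal O}$ to $\R^{d}$. Because $u$ vanishes on the open set $\R^{d}\setminus\overline{{\cal O}}$, every partial derivative $\partial^{\alpha}u$ vanishes there as well; by continuity on $\R^d$ (which holds since $u\in\cC_{b}^{\lfloor m\rfloor}(\R^{d},\C)$), one gets $\partial^{\alpha}u=0$ on the closure, in particular on $\partial{\cal O}$. Given two distinct points $x,y\in\R^{d}$, I would split into three cases: both in $\overline{{\cal O}}$ (use density of ${\cal O}$ and continuity of $\partial^{\alpha}u$ to transfer the seminorm from ${\cal O}$ to $\overline{\cal O}$); both outside ${\cal O}$ (quotient vanishes); and the mixed case $x\in{\cal O}$, $y\notin{\cal O}$, where one picks on the segment $[x,y]$ the first point $z$ at which one exits ${\cal O}$, noting $\partial^{\alpha}u(z)=0$ and $\|x-z\|\le\|x-y\|$, and then approximates $z$ from inside ${\cal O}$ by $x_{k}=x+(1-1/k)(z-x)\in{\cal O}$ so that the estimate $|\partial^{\alpha}u(x)-\partial^{\alpha}u(x_k)|\le[\partial^{\alpha}u]_{\tau,{\cal O}}\|x-x_k\|^{\tau}$ passes to the limit. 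This gives $[\partial^{\alpha}u]_{\tau,\R^{d}}\le[\partial^{\alpha}u]_{\tau,{\cal O}}$, and the converse is obvious; the agreement with $[\partial^{\alpha}u]_{\tau,\overline{\cal O}}$ follows in the same way.

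For part (ii), the plan is the standard combination of integration by parts with the translation trick for Hölder regularity. For $|\alpha|=\lfloor m\rfloor$, integration by parts (legitimate thanks to the compact support) gives $\widehat{\partial^{\alpha}u}(a)=(ia)^{\alpha}\hat{u}(a)$. To gain the extra $\|a\|^{\tau}$ I would exploit the identity $e^{-i\langle a,\pi a/\|a\|^{2}\rangle}=-1$ to write
\[
2\,\widehat{\partial^{\alpha}u}(a)=\int_{\R^{d}}\bigl(\partial^{\alpha}u(x)-\partial^{\alpha}u(x-\pi a/\|a\|^{2})\bigr)e^{-i\langle a,x\rangle}\,dx.
\]
Since the integrand is supported in $\overline{{\cal O}}\cup(\overline{{\cal O}}+\pi a/\|a\|^{2})$, a set of measure $\le 2L_d(\overline{{\cal O}})$, the Hölder bound supplied by (i) yields $|\widehat{\partial^{\alpha}u}(a)|\le C\,[\partial^{\alpha}u]_{\tau,\R^{d}}\,\|a\|^{-\tau}$ for $\|a\|\ge 1$. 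Using the elementary inequality $\|a\|^{\lfloor m\rfloor}\le C_{d}\sum_{|\alpha|=\lfloor m\rfloor}|a^{\alpha}|$ and $(ia)^\alpha\hat u(a)=\widehat{\partial^\alpha u}(a)$, one multiplies through to obtain $\|a\|^{m}|\hat{u}(a)|\le C\sum_{|\alpha|=\lfloor m\rfloor}[\partial^{\alpha}u]_{\tau,\R^{d}}$ for $\|a\|\ge 1$. For $\|a\|<1$, the crude bound $|\hat{u}(a)|\le L_{d}(\overline{{\cal O}})\,\|u\|_{0,\R^{d}}$ gives $\|a\|^{m}|\hat{u}(a)|\le L_{d}(\overline{{\cal O}})\,\|u\|_{0,\R^{d}}$. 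Combining the two ranges produces the announced bound.

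The main obstacle is the mixed case in part (i): one needs to verify that Hölder regularity on the open set ${\cal O}$ is really inherited across $\partial{\cal O}$, even though a priori the Hölder inequality is assumed only between pairs of points of ${\cal O}$. The segment-crossing argument above works provided the segment $[x,z)$ is contained in ${\cal O}$, which is guaranteed by choosing $z$ to be the \emph{first} exit point from ${\cal O}$ along $[x,y]$. Everything in part (ii) is a routine packaging of the translation trick once part (i) is in hand.
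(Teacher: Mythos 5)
The paper labels this proposition as ``classical'' and supplies no proof, so there is nothing in the text to compare your attempt against; what follows is an assessment of your argument on its own merits. The overall strategy is the standard one: propagate the H\"older seminorm across the boundary by a segment argument, and then obtain the Fourier decay via integration by parts combined with the translation-by-$\pi a/\|a\|^2$ trick. Part (ii) is a correct and complete packaging of this method, including the low-frequency regime.

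The one place that needs tightening is the mixed case of part (i). You take $x\in{\cal O}$, $y\notin{\cal O}$, and let $z$ be the first point on $[x,y]$ at which one exits ${\cal O}$, asserting $\partial^{\alpha}u(z)=0$. This assertion is not automatic: $z\in\partial{\cal O}$, and what we actually know is that $\partial^{\alpha}u$ vanishes on the closure of $\R^{d}\setminus\overline{\cal O}$, which is $\partial\overline{\cal O}\cup(\R^{d}\setminus\overline{\cal O})$; in general $\partial{\cal O}$ can be strictly larger than $\partial\overline{\cal O}$ (e.g.\ ${\cal O}=B(0,1)\setminus\{0\}$, where the point $0$ belongs to $\partial{\cal O}$ but not to $\partial\overline{\cal O}$, and $u$ need not vanish there). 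The fix is to carry out the case analysis with $\overline{\cal O}$ instead of ${\cal O}$: if both $x,y\in\overline{\cal O}$ use density of ${\cal O}$ in $\overline{\cal O}$ plus continuity of $\partial^{\alpha}u$; if both are outside $\overline{\cal O}$ the quotient is $0$; and in the mixed case $x\in\overline{\cal O}$, $y\notin\overline{\cal O}$, let $z$ be the last point of $[x,y]$ lying in $\overline{\cal O}$, so that $z\in\partial\overline{\cal O}$ and hence $\partial^{\alpha}u(z)=0$, and conclude as you do via $\|x-z\|\le\|x-y\|$ and the first case. With this modification the argument is complete for an arbitrary bounded open ${\cal O}$. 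In the concrete applications of the paper (${\cal O}$ a ball, an annulus, or $\mathrm{Int}(\widetilde{\Gamma_0})$) one has $\partial{\cal O}=\partial\overline{\cal O}$ anyway, so your version already suffices there; still, the proposition as stated is for a general bounded open set, and the $\overline{\cal O}$-version of the segment argument is the one that actually closes the proof.
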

\begin{proof}{ of (\ref{J(a)-K(a)})}
Define: 
$$\forall t\in B_R,\quad \cR(t) := \sum_{n=1}^{+\infty}R_n(t) \quad \text{and}\quad \forall t\in\R^d\setminus\{0\},\quad \cE(t) := \sum_{n=1}^{+\infty}E_n(t),$$
$$\forall t\in\R^d,\quad F(t) : =  1_{B_R}(t)\, \hat h(t)\, \chi(t)\, \cR(t) \quad \text {and}\quad 
G(t):= \left \{
    \begin{array}{ll}
      1_{K_{\rho,b}}(t)\, \hat h(t)\, (1-\chi(t))\, \cE(t) \quad \text{if } t\neq 0\\[0.12cm]
       0 \quad \text{if } t=0.
    \end{array}
\right. 
$$
We have 
$$\forall a\in\R^d,\quad E(a) = \hat{F}(a) + \hat{G}(a).$$ 
Note that $\chi\hat h\in\cC_c^{\infty}(\R^d,\C)$ is compactly supported in the closed ball $\overline{B}_R$, and that, by Hypothesis {\bf (H)}, we have $\cR\in \cC_b^{m}(B_R,\C)$ with $m > m_d$. By applying Proposition~\ref{Cm-elementaire-infini} to $u:=F$, we obtain $\|a\|^{m}|\hat{F}(a)| = O(1)$, thus 
$\hat{F}(a) = o(\|a\|^{-(d-1)/2})$ when $\|a\| \r+\infty$ since $m>(d-1)/2$. The same result holds for $\hat{G}(a)$ (replace $B_R$ with $K_{\rho,b}$). 
\end{proof}
\subsection{Study of the second error term $E_1(a)$} 
In this subsection we prove that we have when $\|a\|\to +\infty$: 
\begin{equation} \label{I1(a)-I3(a)} 
E_1(a) = o(\|a\|^{-(d-1)/2}). 
\end{equation}
Let $E_{11}(a)$ and $E_{12}(a)$ denote the two integrals in the right hand side of (\ref{E1}), so that we have: $E_1(a) := E_{11}(a) + E_{12}(a)$. Define: $\forall t\in B_R,\ \theta_1(t) = \chi(t)\big(\hat h(t)\lambda(t)\, L(t) - \hat h(0)\, L(0)\big)$. Then  
\begin{equation} \label{I1}
E_{11}(a) = \widehat{q_1}(a)\ \ \ \mbox{with}\ \ q_1 := 1_{B_R}\, \theta_1/v_0,
\end{equation} 
where  $v_0(t)=1-\lambda(t)$. Next define: $\forall t\in B_R,\ \theta_2(t) := \chi(t)\left(\lambda(t) - 1-i\|\vec{m}\|t_1 + \frac{1}{2} \langle \Sigma t,t\rangle\right)$ and $\tilde{v}_0(t) := -i\|\vec{m}\|t_1+\frac{1}{2}\langle \Sigma t,t\rangle$. 
Then   
\begin{equation} \label{I3}
E_{12}(a) = \widehat{q_2}(a)\ \ \ \mbox{with}\ \ q_2 := 1_{B_R}\, \theta_2/(v_0\, \tilde{v}_0).  
\end{equation} 

Unfortunately, since $q_1$ and $q_2$ are not defined at 0, $\widehat{q_1}(a)$ and $\widehat{q_2}(a)$ cannot be studied by the elementary arguments of  Subsection~\ref{sub-Ja-Ka}. This fact constitutes the main difficulty of the proof in  \cite{bab}. Below, we present the two key results  (Propositions~\ref{estimat-fourrier-q} and \ref{estimat-fourrier-q-bis}) to obtain (\ref{I1(a)-I3(a)}). In the next subsection, these two propositions are also used to obtain the desired result for the main part $I_1(a)$ (by difference with the Gaussian case, see Lemma~\ref{equivalent-particulier}). 

\noindent Recall that $w(\cdot)$ is defined in (\ref{w}). In the two next propositions, we consider any real numbers 
$m > m_d$ and $r>0$.  
\begin{pro} \label{estimat-fourrier-q}
Let $\theta$ and $v$ be complex-valued functions on $B_{r}$ such that: \\[0.12cm]
\indent {\scriptsize $\bullet$} $\theta\in\cC_b^m\big(B_{r},\C\big)$ with compact support in $B_r$ and $\theta(0)=0$, \\
\indent {\scriptsize $\bullet$} $v\in\cC_b^m\big(B_{r},\C\big)$ and  
\begin{equation} \label{(i)}
\forall j\in\{2,\ldots,d\}, (\partial_j v)(0)=0
\end{equation}  
\begin{equation} \label{(ii)}
\exists (a,b)\in(\R_+^*)^2,\ \forall x\in B_{r},\ \ a|w(x)|\leq |v(x)|\leq b|w(x)|.
\end{equation}

Then $q := 1_{B_{r}}\, \theta/v$ is integrable on $\R^d$ and $\lim_{\|a\|\to+\infty}\|a\|^{(d-1)/2}\, \hat{q}(a)=0$. 
\end{pro}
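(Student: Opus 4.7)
The integrability of $q$ is immediate: since $\theta$ is bounded and $|v|\geq a|w|$ by \eqref{(ii)}, we have $|q|\leq \|\theta\|_{0,\R^d}/(a|w|)$ on $B_r$, and $1/|w|$ is integrable at $0$ by Remark~\ref{intdeunsurw}; as $q$ is compactly supported, $q\in L^1(\R^d)$ and $\hat q$ is continuous. For the decay I would separate the regular and singular parts of $q$. Fix a cut-off $\psi\in\cC_b^\infty(\R^d,\R)$ with $\psi\equiv 1$ on $B_{r/4}$ and $\supp\psi\subset B_{r/2}$, and write $\hat q(a)=\widehat{(1-\psi)q}(a)+\widehat{\psi q}(a)$. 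Since $\theta$ is compactly supported in $B_r$, the function $(1-\psi)q$ vanishes outside $\overline{B_r}\setminus B_{r/4}$; on this support $|v|$ is bounded below by a positive constant, so the quotient rule combined with $\theta,v\in\cC_b^m(B_r,\C)$ shows $(1-\psi)q\in\cC_b^m(\R^d,\C)$. Proposition~\ref{Cm-elementaire-infini}(ii) then gives $\|a\|^m\,|\widehat{(1-\psi)q}(a)|=O(1)$, and since $m>m_d\geq(d-1)/2$ this yields $\|a\|^{(d-1)/2}\,\widehat{(1-\psi)q}(a)\to 0$.

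The core of the argument is the singular piece $\psi q$, for which I would carry out the anisotropic dyadic decomposition announced in the introduction. Since $|w(x)|$ behaves like $|x_1|+\|x'\|^2$, the natural scale is $|x_1|\sim 2^{-k}$, $\|x'\|\sim 2^{-k/2}$; I would introduce a smooth partition of unity $(\phi_k)_{k\geq k_1}$ adapted to the anisotropic annuli $A_k:=\{x\in B_r:|w(x)|\sim 2^{-k}\}$ and write $\psi q=\sum_{k\geq k_1}\phi_k\psi q$. The key step is the change of variables $y=(2^kx_1,2^{k/2}x')$, with Jacobian $2^{-k(d+1)/2}$, which sends $A_k$ onto a fixed reference annulus $A\subset\R^d$ independent of $k$. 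Under this rescaling, the hypothesis $\theta(0)=0$ together with condition \eqref{(i)} ensure that the rescaled functions
$$V_k(y):=2^k\,v(2^{-k}y_1,2^{-k/2}y'),\qquad \Theta_k(y):=2^{k/2}\,\theta(2^{-k}y_1,2^{-k/2}y')$$
are uniformly bounded in $\cC_b^m(A,\C)$: the factor $2^{k/2}$ produced by each transverse derivative $\partial_{y_j}$ ($j\geq 2$) is absorbed by the linear vanishing at $0$ of $\theta$ and of $\partial_j v$, which forces $|\theta(x)|,|\partial_j v(x)|\lesssim\|x\|\lesssim 2^{-k/2}$ on $A_k$. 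Condition \eqref{(ii)} furnishes $|V_k|\gtrsim|w(y)|\gtrsim 1$ on $A$, so $\Theta_k/V_k$ together with the rescaled cut-off lies uniformly in $\cC_b^m(A,\C)$.

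Combining the scaling identity with Proposition~\ref{Cm-elementaire-infini}(ii) applied to each rescaled piece yields a uniform bound
$$\bigl|\widehat{\phi_k\psi q}(a)\bigr|\leq C\,2^{-kd/2}\min\bigl(1,\|\tilde a_k\|^{-m}\bigr),\qquad \tilde a_k:=\bigl(2^{-k}a_1,\,2^{-k/2}a'\bigr),$$
with $C$ independent of $k$ and $a$. Summing over $k$ and splitting at the critical index $k_0(a)$ defined by $2^{k_0(a)}\sim\max(|a_1|,\|a'\|^2)$ -- below which the $\|\tilde a_k\|^{-m}$ decay is effective, above which the trivial bound applies -- produces a total of order $o(\|a\|^{-(d-1)/2})$. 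The main obstacle is the uniformity in $k$ of the $\cC_b^m$ norms of $\Theta_k/V_k$: this requires a careful chain of Taylor and Leibniz estimates exploiting $m>2$, the vanishing at $0$ of $\theta$ and of the transverse derivatives of $v$, and the H\"older regularity of the top-order derivatives. The hypothesis $m>m_d$ is precisely the threshold that allows the two-regime summation to beat $\|a\|^{(d-1)/2}$.
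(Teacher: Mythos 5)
Your plan follows the paper's route essentially identically: a smooth cutoff isolates the singularity at $0$, a parabolic (anisotropic) dyadic decomposition is performed, each rescaled piece is shown uniformly controlled in $\cC_b^m$, and Proposition~\ref{Cm-elementaire-infini} is applied piece by piece. The paper's dilations $D_k(x) = (x_1/4^k, x_2/2^k,\ldots,x_d/2^k)$ are your $(2^{-k}y_1, 2^{-k/2}y')$ up to the reindexing $k \leftrightarrow 2k$, the annuli $\Gamma_k=\{4^{-(k+1)}\le|w(x)|\le 4^{-k}\}$ with the partition of unity $\rho\circ D_{-k}$ are your $(\phi_k)$, and Proposition~\ref{deri-theta-sur-v} together with Lemmas~\ref{deri-theta} and~\ref{deri-1/v} is exactly the uniform $\cC_b^m$ estimate you single out as the main obstacle. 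The one place where you diverge is the final summation: rather than splitting at a critical index $k_0(a)$ and tracking the anisotropic quantity $\|\tilde a_k\|$, the paper bounds $\|a\|^{(d-1)/2}\, 2^{-k(d+1)}|\widehat{\psi_k}(D_k a)|\lesssim 2^{-k}$ uniformly in $a$ (so the tail is uniformly small) and then uses $\|D_k a\|^m|\widehat{\psi_k}(D_k a)| = O(2^k)$ with $m>(d-1)/2$ to make each fixed-$k$ term vanish as $\|a\|\to\infty$; this two-step argument is a bit lighter than the two-regime bookkeeping you propose, though both close. One small imprecision: you attribute the need for $m>2$ to the Taylor/Leibniz chain here, but Remark~\ref{rk-m} notes that the present Proposition already holds for $m>\max(1,(d-1)/2)$; the threshold $m>2$ is only forced by Proposition~\ref{estimat-fourrier-q-bis}, where the second-order vanishing of $\theta$ must compensate the double singularity $1/(v\tilde v)$.
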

\begin{pro} \label{estimat-fourrier-q-bis}
In addition to the hypotheses of Proposition~\ref{estimat-fourrier-q}, we consider another complex-valued function $\tilde{v}$ on $B_{r}$ satisfying the same hypotheses as $v(\cdot)$. Moreover we assume that all the first and second partial derivatives of $\theta$ vanish at 0. Then $q := 1_{B_{r}}\, \theta/(v\,\tilde{v})$ is integrable on $\R^d$ and $\lim_{\|a\|\to +\infty}\|a\|^{(d-1)/2}\hat{q}(a)=0$.
\end{pro}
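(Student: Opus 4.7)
The natural first attempt is to reduce Proposition~\ref{estimat-fourrier-q-bis} to Proposition~\ref{estimat-fourrier-q} by setting $\tilde\theta := \theta/\tilde v$, extended by $0$ at the origin, so that $q = 1_{B_r}\tilde\theta/v$. Continuity of $\tilde\theta$ at $0$ follows easily from the order $\geq 2$ vanishing of $\theta$, but the $\cC_b^{m'}$ smoothness needed to apply Proposition~\ref{estimat-fourrier-q} fails already at order~$1$: taking $\tilde v(x) = -ix_1 + \|x'\|^{2}$ and a $\theta$ that agrees with $x_1\|x'\|^{2}$ near $0$ (both data satisfying every assumption of the proposition), a direct computation gives $\partial_1\tilde\theta(0,x')\to 1$ as $x'\to 0$ with $x'\neq 0$, while $\partial_1\tilde\theta(0,0)=0$. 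So $\tilde\theta\notin\cC_b^{1}(B_r,\C)$ in general and a direct reduction is not available.

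Instead, the plan is to adapt the dyadic analysis used to prove Proposition~\ref{estimat-fourrier-q} and apply it directly to
$$\hat q(a) \;=\; \int_{B_r}\frac{\theta(x)}{v(x)\,\tilde v(x)}\,e^{-i\langle a,x\rangle}\,dx.$$
Choose an asymmetric dyadic partition of unity $(\psi_{j,k})_{(j,k)\in\N^{2}}$ on $B_r\setminus\{0\}$ adapted to the parabolic anisotropy of $w$, so that $\psi_{j,k}$ is supported on the cell $\{|x_1|\asymp 2^{-j}\}\cap\{\|x'\|\asymp 2^{-k}\}$, together with a remainder supported away from $0$ on which the integrand is already in $\cC_b^{m}$ and Proposition~\ref{Cm-elementaire-infini} applies directly. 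On each dyadic cell combine three inputs: a Taylor-type bound $|\theta(x)|\leq C\|x\|^{2+\tau}$ with $\tau = m-\lfloor m\rfloor > 0$ (respectively the corresponding bound with Hölder remainder if $m\in\N$), obtained from the vanishing of the partial derivatives of $\theta$ of order $\leq 2$ at $0$ together with the Hölder regularity of the derivatives of order $\lfloor m\rfloor$; the lower bound $|v(x)\,\tilde v(x)|\gtrsim |w(x)|^{2}\asymp(2^{-j}+2^{-2k})^{2}$ from hypothesis~(\ref{(ii)}) applied to both $v$ and $\tilde v$; and one or several integrations by parts along whichever of the coordinates $x_1$ or $x_j$, $j\geq 2$, carries the dominant component of $a$, each integration by parts gaining a factor $1/|a_1|$ or $1/\|a'\|$ at the cost of a factor $2^{j}$ or $2^{k}$ when the derivative hits $\psi_{j,k}$.

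Summing the resulting estimates over $(j,k)\in\N^{2}$ should yield $|\hat q(a)|=o(\|a\|^{-(d-1)/2})$ as $\|a\|\to+\infty$. The main obstacle is precisely this summation along the parabolic diagonal $|x_1|\asymp\|x'\|^{2}$, where $v$ and $\tilde v$ are simultaneously small and $|v\tilde v|$ saturates its $(2^{-j}+2^{-2k})^{2}$ lower bound: this double pole would overwhelm a $\theta$ vanishing only at order~$1$ at the origin (the setting of Proposition~\ref{estimat-fourrier-q}), and the two additional orders of vanishing assumed here are exactly what is needed for the dyadic sum to converge and to deliver strict $o(\cdot)$ rather than a mere $O(\cdot)$ decay. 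Once this book-keeping is made precise, the end of the argument runs parallel to that of Proposition~\ref{estimat-fourrier-q}, with the exponents simply shifted by $2$.
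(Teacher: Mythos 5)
Your counterexample showing that $\tilde\theta := \theta/\tilde v$ fails to be $\cC_b^1$ (with $\tilde v=w$ and $\theta\sim x_1\|x'\|^2$) is correct, and this is indeed why the paper does not reduce Proposition~\ref{estimat-fourrier-q-bis} to Proposition~\ref{estimat-fourrier-q}. The dyadic scheme you then sketch is sound, but it follows a genuinely different route from the paper's. You propose a \emph{double}-indexed family of anisotropic rectangles $\{|x_1|\asymp 2^{-j}\}\cap\{\|x'\|\asymp 2^{-k}\}$ with explicit integrations by parts and a two-parameter summation. The paper instead works with \emph{single}-indexed parabolic annuli $\Gamma_k=\{|w(x)|\asymp 4^{-k}\}$ together with the anisotropic scalings $D_k(x)=(x_1/4^k,x'/2^k)$; the self-similarity $D_k(\Gamma_0)=\Gamma_k$ collapses all the bookkeeping into one index, the pieces $\widetilde\psi_k=\rho\cdot\theta_k/(v_k\tilde v_k)$ live on the \emph{fixed} compact annulus $\widetilde\Gamma_0$, and the Fourier decay on each piece is then supplied once and for all by Proposition~\ref{Cm-elementaire-infini} rather than by hand-crafted integrations by parts. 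The two extra orders of vanishing of $\theta$ at $0$ enter there via Lemma~\ref{deri-sec-theta}, which yields $\|\theta_k\|_{m,\widetilde\Gamma_0}\lesssim 2^{-k(2+\nu)}$ with $\nu=\min(m-2,1)$, and hence Proposition~\ref{deri-theta-sur-v-bis}: $\|\widetilde\psi_k\|_{m,\widetilde\Gamma_0}\lesssim (2^k)^{2-\nu}$, compensating the double pole $1/(v_k\tilde v_k)$ exactly as you correctly anticipated. Your route is more transparent geometrically but requires summing over $(j,k)$ including the parabolic diagonal $j\approx 2k$ where $|v\tilde v|$ saturates; the paper's rescaling absorbs that diagonal from the outset. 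Both routes work; yours is not carried through to the final summation and integration-by-parts counts, but the plan is plausible and the key mechanism (order-$2$ vanishing beating the order-$2$ pole) is identified correctly.
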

The proofs of Propositions~\ref{estimat-fourrier-q}-\ref{estimat-fourrier-q-bis}, based on dyadic decompositions, are partially presented in \cite{bab2,bab}. Since dyadic decomposition is not familiar to probabilistic readers, 
these proofs are detailed in Appendix~\ref{B} for the sake of completeness. 

\noindent \begin{proof}{ of (\ref{I1(a)-I3(a)})} 
Proposition~\ref{estimat-fourrier-q} applied with $\theta:=\theta_1$ and $v:=v_0$ (see Rk.~\ref{fonction-v}) gives 
$\widehat{q_1}(a) = o(\|a\|^{-(d-1)/2})$ when $\|a\|\to+\infty$. Similarly Proposition~\ref{estimat-fourrier-q-bis} applied with $\theta:=\theta_2$,  $v:=v_0$ and $\tilde{v}:=\tilde{v}_0$ gives $\widehat{q_2}(a) = o(\|a\|^{-(d-1)/2})$. Then (\ref{I1(a)-I3(a)}) follows from (\ref{I1}) (\ref{I3}).  
\end{proof}

\subsection{Study of the main part $I_1(a)$ of $I(a)$} \label{sub-I(a)}
Let $\mathfrak{a} : \R^+\r \R^d$ be a measurable function satisfying (\ref{atau}). Here we prove that  
\begin{equation} \label{I2(a)}
\mbox{$\lim_{\tau\to+\infty} \tau^{(d-1)/2}I_1(\mathfrak{a}(\tau))=(2\pi)^{(d+1)/2}\, C(L,\vec{m},\Sigma,\mathfrak{A})\,\, \hat{h}(0)$}. 
\end{equation} 
The proof of (\ref{I2(a)}) in \cite{bab} involves the modified Bessel functions and some related computations partially made in the book \cite{wat}.   
Here we present a direct and simpler proof of (\ref{I2(a)}) based on the next proposition. We denote by $S(\R^d)$ the so-called Schwartz space. 
\begin{pro}\label{equivalent}
Let $\vec{w}\in\R^d\setminus\{0\}$,  and let $\mathfrak{p}\,:\,\R^+\to \R^{d}$ such that $\mathfrak{P} := \lim_{\tau\to +\infty}\mathfrak{p}(\tau)/\sqrt{\tau}$ exists in $\R^{d}$. Then we have for all function $k\in S(\R^d)$ 
$$
\lim_{\tau\to+\infty}\tau^{\frac{d-1}{2}}\int_{\R^d}\frac{k(u)\,e^{-i\langle u,\tau\vec{w}+\mathfrak{p}(\tau)\rangle}}{-i\langle \vec{w},u\rangle+\|u\|^2}\,du 
= \frac{2\,\pi^{\frac{d+1}{2}}}{\|\vec{w}\|}\,k(0) \\ 
\exp\bigg(-\frac{\|\mathfrak{P}\|^2\|\vec{w}\|^2-\langle\mathfrak{P},\vec{w}\rangle^2}{4\|\vec{w}\|^2}\bigg).
$$
\end{pro}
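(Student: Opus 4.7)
The plan is to reduce to the canonical case $\vec{w}=\|\vec{w}\|\vec{e}_1$ by an isometric change of variables in $u$, and then to transform the integrand via the Laplace-type identity
$$\frac{1}{-i\|\vec{w}\|u_1+\|u\|^2}=\int_0^{+\infty}e^{-s(\|u\|^2-i\|\vec{w}\|u_1)}\,ds\qquad(u\neq 0),$$
which is legitimate since $\Re(-i\|\vec{w}\|u_1+\|u\|^2)=\|u\|^2>0$. Writing $u=(u_1,u')$, $\mathfrak{p}(\tau)=(p_1(\tau),p'(\tau))$, $\mathfrak{P}=(\mathfrak{P}_1,\mathfrak{P}')$, and inserting this identity in the integral (Fubini being justified by a cutoff away from $u=0$, the only sensitive point since $1/\|u\|^2$ is borderline non-integrable at the origin when $d=2$), one gets
$$J(\tau):=\int_{\R^d}\frac{k(u)\,e^{-i\langle u,\tau\vec{w}+\mathfrak{p}(\tau)\rangle}}{-i\langle\vec{w},u\rangle+\|u\|^2}\,du=\int_0^{+\infty}I(s,\tau)\,ds,$$
with $I(s,\tau):=\int_{\R^d}k(u)\,e^{-s\|u\|^2-i\langle u,(\tau-s)\vec{w}+\mathfrak{p}(\tau)\rangle}\,du$.

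The decisive step is then the double rescaling $s=\tau-\sqrt{\tau}\sigma$ and $u=v/\sqrt{\tau}$. The first change centers the contribution of $I(\cdot,\tau)$ near its stationary-phase value $s=\tau$ on the natural scale $\sqrt{\tau}$, and the second normalizes the Gaussian variable. A direct computation yields
$$\tau^{(d-1)/2}\,J(\tau)=\int_{-\infty}^{\sqrt{\tau}}H(\sigma,\tau)\,d\sigma,$$
where
$$H(\sigma,\tau):=\int_{\R^d}k(v/\sqrt{\tau})\,e^{-(1-\sigma/\sqrt{\tau})\|v\|^2-i\sigma\|\vec{w}\|v_1-i\langle v,\mathfrak{p}(\tau)/\sqrt{\tau}\rangle}\,dv.$$
For each fixed $\sigma$, dominated convergence in $v$ alone gives $H(\sigma,\tau)\to H_\infty(\sigma):=k(0)\,\pi^{d/2}\,\exp\!\bigl(-[(\sigma\|\vec{w}\|+\mathfrak{P}_1)^2+\|\mathfrak{P}'\|^2]/4\bigr)$. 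The elementary one-dimensional Gaussian integration $\int_\R e^{-(\sigma\|\vec{w}\|+\mathfrak{P}_1)^2/4}d\sigma=2\sqrt{\pi}/\|\vec{w}\|$ then produces $\int_\R H_\infty(\sigma)d\sigma=\frac{2\pi^{(d+1)/2}}{\|\vec{w}\|}k(0)\,e^{-\|\mathfrak{P}'\|^2/4}$, and the identity $\|\mathfrak{P}'\|^2=\|\mathfrak{P}\|^2-\mathfrak{P}_1^2=(\|\mathfrak{P}\|^2\|\vec{w}\|^2-\langle\mathfrak{P},\vec{w}\rangle^2)/\|\vec{w}\|^2$ rewrites the result in the form announced.

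The main obstacle is the interchange of limit and integral in the outer $\sigma$-integration: the trivial bound $|H(\sigma,\tau)|\leq\|k\|_\infty(\pi/(1-\sigma/\sqrt{\tau}))^{d/2}$ is uniform only on $\sigma\leq 3\sqrt{\tau}/4$ and is not $\sigma$-integrable on $\R$. Decay in $|\sigma|$ must be extracted from the oscillating factor $e^{-i\sigma\|\vec{w}\|v_1}$ by iterating integration by parts in $v_1$: since $k\in S(\R^d)$, each IBP converts a factor $(-i\sigma\|\vec{w}\|)^{-1}$ for an extra derivative of $k(v/\sqrt{\tau})\cdot G(v,\sigma,\tau)$, and the resulting $L^1(dv)$ norm stays uniformly bounded thanks to the Gaussian weight (whose rate is $\geq 1/4$ on $\sigma\leq 3\sqrt{\tau}/4$) and the boundedness of $\mathfrak{p}(\tau)/\sqrt{\tau}$. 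This produces $|H(\sigma,\tau)|\leq C_N(1+|\sigma|)^{-N}$ on that range, supplying the needed dominator. The complementary range $\sigma\in[3\sqrt{\tau}/4,\sqrt{\tau}]$, which corresponds to $s\in[0,\tau/4]$ in the original $s$-variable, is handled separately: there $\|(\tau-s)\vec{w}+\mathfrak{p}(\tau)\|\gtrsim\tau$, so the rapid decay of $\hat{k}$ at infinity (combined with Leibniz' rule to bound $\partial^\alpha(kg_s)$) yields $I(s,\tau)=O(\tau^{-M})$ uniformly in $s$, whence the corresponding contribution to $\tau^{(d-1)/2}J(\tau)$ vanishes as $\tau\to+\infty$. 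Together these estimates close the dominated convergence argument.
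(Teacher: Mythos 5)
Your proposal is correct and takes a genuinely different route from the paper. The paper's proof also reduces to $\vec w=\|\vec w\|\vec e_1$, but it then uses the Laplace identity $\frac{1}{-i\mu v_1+\|v'\|^2}=\int_0^\infty e^{(i\mu v_1-\|v'\|^2)u}\,du$ (note: $\|v'\|^2$, not $\|v\|^2$) only for the special non-Gaussian test function $H(v)=e^{-(\mu^2 v_1^2+\|v'\|^4)/2}$, computes the resulting Gaussian integrals explicitly (Lemma~\ref{equivalent-particulier}), and then reduces the general $k\in S(\R^d)$ to this case by subtraction: writing $\Delta(v)$ as in (\ref{Delta}), the error is controlled by invoking Propositions~\ref{estimat-fourrier-q} and~\ref{estimat-fourrier-q-bis}, i.e.\ the dyadic-decomposition machinery of the appendix. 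Your approach instead applies the Laplace identity to the actual denominator $-i\|\vec w\|u_1+\|u\|^2$, which gives a genuine Gaussian in all of $u$, and the double rescaling $s=\tau-\sqrt\tau\,\sigma$, $u=v/\sqrt\tau$ makes the leading asymptotics transparent by a single dominated-convergence step. This buys you self-containment (no appeal to the dyadic estimates for this proposition) and arguably more conceptual clarity about where the $\tau^{(d-1)/2}$ scaling comes from; the paper in exchange gets to externalize all the hard uniform estimates into propositions it has already proved for other purposes.

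Two points in your sketch deserve more care if you were to write this out. First, the Fubini interchange: since $\int_0^\infty e^{-s\|u\|^2}\,ds=\|u\|^{-2}$ is not integrable against $|k(u)|$ near $u=0$ when $d=2$, a cutoff in $u$ alone does not yield a dominated convergence in $s$ afterwards; the cleaner fix is to truncate the $s$-integral at $T$, use $\int_0^T e^{-s(\|u\|^2-i\mu u_1)}\,ds=\frac{1-e^{-T(\|u\|^2-i\mu u_1)}}{\|u\|^2-i\mu u_1}$ together with the integrability of $|k(u)|\,|{-i\mu u_1+\|u\|^2}|^{-1}$ (compare Remark~\ref{intdeunsurw}) to pass $T\to\infty$ on the left, and conclude $\int_0^T I(s,\tau)\,ds\to J(\tau)$; absolute convergence of $\int_0^\infty I(s,\tau)\,ds$ then follows from the same $\sigma$-decay estimates you use for the dominator. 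Second, in the range $\sigma\in[3\sqrt\tau/4,\sqrt\tau]$ (i.e.\ $s\in[0,\tau/4]$), the cleanest way to realize your "rapid decay of $\hat k$" claim is the convolution identity $\widehat{k\,e^{-s\|\cdot\|^2}}(a)=c\int\hat k(a-2\sqrt s\,\zeta)e^{-\|\zeta\|^2}\,d\zeta$ and a split at $\|\zeta\|\le\tau^{1/4}$: on the inner region $|a-2\sqrt s\,\zeta|\gtrsim\tau$ so $\hat k$ is $O(\tau^{-N})$, and on the outer region the Gaussian gives $O(e^{-\sqrt\tau})$. This is sharper than a naive Leibniz bound on $\partial^\alpha(k\,e^{-s\|\cdot\|^2})$, whose $L^1$-norms grow in $s$ and do not by themselves give $O(\tau^{-M})$. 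With these two points tightened, the argument closes.
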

The proof of Proposition~\ref{equivalent} (again based on Propositions~\ref{estimat-fourrier-q}-\ref{estimat-fourrier-q-bis}) is presented below. Let us first apply Proposition~\ref{equivalent} to establish (\ref{I2(a)}). 

\noindent \begin{proof}{ of (\ref{I2(a)})} Since $\vec{m}=\|\vec{m}\|\vec{e_1}$, one can rewrite (\ref{atau}) as follows: $\mathfrak{a}(\tau)=\tau\,\|\vec{m}\|\vec{e_1}+\sqrt{\tau}\, \mathfrak{b}(\tau)$  with $\mathfrak{b} :[0,+\infty)\to\R^d$ such that $\lim_{\tau\to +\infty}\mathfrak{b}(\tau)=\mathfrak{A}$. Denote by $\lambda_1,\ldots,\lambda_d$ the (positive) eigenvalues of  $\Sigma$. Let 
$\Delta := \diag(\sqrt{\lambda_1},\ldots,\sqrt{\lambda_d})$ and let $P$ be any orthogonal $d\times d$-matrix such that $$P^{-1}\Sigma P = \Delta^2 = \diag(\lambda_1,\ldots,\lambda_d).$$
Observe that $\langle \Sigma t,t\rangle = \langle \Delta^2 P^{-1}t ,  P^{-1}t\rangle = \|\Delta  P^{-1}t\|^2$. Set  $\vec{\ell} := \Delta^{-1}\,P^{-1}\,\vec{e_1}$. By using the variable $t=P\Delta^{-1}u$, one obtains (use $t_1=\langle P\Delta^{-1}u,\vec{e_1}\rangle=\langle u,\vec{\ell}\rangle$) 
$$I_1(a) = 2\hat{h}(0)\, L(0)\,(\det\Delta)^{-1} \int \, \frac{\chi(P\Delta^{-1}u)e^{-i\langle \Delta^{-1}u,P^{-1}\,a\rangle}}{-i\langle 2\|\vec{m}\|\vec{\ell},u\rangle +\|u\|^2} \, du.$$
Set $\zeta(x) := \chi(P\Delta^{-1} x)\, $ ($x\in\R^d$) and $\mathfrak{p}(\tau) := \sqrt{2\tau} \Delta^{-1} P^{-1} \mathfrak{b}(2\tau)\, $ ($\tau>0$). From the equality $\langle \Delta^{-1}u,P^{-1}\vec{e_1}\rangle = \langle u,\vec{\ell}\rangle$, it follows that 
\begin{eqnarray*}
I_1(\mathfrak{a}(\tau))&=&2\hat{h}(0)\, L(0)\,(\det\Delta)^{-1}\int \,\frac{\chi(P\Delta^{-1}u)e^{-i\big\langle \Delta^{-1}u\, ,\, P^{-1}\big(\tau\,\|\vec{m}\|\vec{e_1}+\sqrt{\tau}\,\mathfrak{b}(\tau)\big)\big\rangle}}{-i\langle 2\|\vec{m}\|\vec{\ell},u\rangle +\|u\|^2} \, du\\
&=& 2\hat{h}(0)\, L(0)\,(\det\Delta)^{-1}\,\int \,\frac{\zeta(u)e^{-i\big\langle u\, ,\, 2(\frac{\tau}{2})\|\vec{m}\|\vec{\ell}+\mathfrak{p}(\frac{\tau}{2})\big\rangle}}{-i\langle 2\|\vec{m}\|\vec{\ell},u\rangle +\|u\|^2} \, du. 
\end{eqnarray*} 
Now $\Delta^{-1}=P^{-1}\Sigma^{-\frac{1}{2}}P$ gives $\vec{\ell}=P^{-1}\Sigma^{-\frac{1}{2}}\vec{e_1}$, so  $\|\vec{\ell}\|=\|\Sigma^{-\frac{1}{2}}\vec{e_1}\|$ and $\|\Sigma^{-\frac{1}{2}}\vec{m}\|=\|\vec{m}\|\|\vec{\ell}\|$. Moreover, we have  $\zeta(0)=\chi(0)=1$ and 
$$\lim_{\tau\to+\infty} \mathfrak{p}(\tau)/\sqrt{\tau} = \mathfrak{P} \quad \text{with}\quad \mathfrak{P} := \sqrt{2}\,\Delta^{-1}P^{-1}\, \mathfrak{A}=\sqrt{2}\,P^{-1}\Sigma^{-\frac{1}{2}}\, \mathfrak{A}.$$ 
From Proposition~\ref{equivalent}, applied with $\mathfrak{P}$ previously defined, $\vec{w}:= 2\|\vec{m}\|\vec{\ell}=2P^{-1}\Sigma^{-\frac{1}{2}}\vec{m}$, and finally with the function $k := \zeta$, one obtains:
\begin{eqnarray*}
\lim_{\tau\to +\infty}(\frac{\tau}{2})^{\frac{d-1}{2}}I_1(\mathfrak{a}(\tau)) 
&=& 2\hat{h}(0)\, L(0)\,(\det\Sigma)^{-\frac{1}{2}}\,\frac{2\pi^{\frac{d+1}{2}}}{2\|\Sigma^{-\frac{1}{2}}\vec{m}\|} \\ 
&\ & \quad \quad \quad \quad \times\quad \exp\bigg(-\frac{\|\Sigma^{-\frac{1}{2}}\vec{m}\|^2\|\Sigma^{-\frac{1}{2}}\mathfrak{A}\|^2 -\langle\Sigma^{-\frac{1}{2}}\vec{m}, \Sigma^{-\frac{1}{2}}\mathfrak{A}\rangle^2}{2\|\Sigma^{-\frac{1}{2}}\vec{m}\|^2}\bigg),
\end{eqnarray*}
from which we easily deduce (\ref{I2(a)}). 
\end{proof}
\begin{proof}{ of Proposition \ref{equivalent}}
Let $U$ be an isometric linear map on $\R^d$ such that $U(\vec{w})=\|\vec{w}\|\vec{e_1}$. Let $\tau>0$. The change of  variable $v=(v_1,v')=U(u)$ in the integral of Proposition \ref{equivalent} gives 
$$\displaystyle\int_{\R^d}\frac{k(u)\,e^{-i\langle u,\tau \vec{w}+\mathfrak{p}(\tau)\rangle}}{-i\langle \vec{w},u\rangle+\|u\|^2}\,du=
\int_{\R^d}\frac{k(U^{-1}(v))\,e^{-i\tau\|\vec{w}\|v_1}e^{-i\langle v,U(\mathfrak{p}(\tau))\rangle}}{-i\|\vec{w}\|v_1+\|v\|^2}\,dv,$$
and by hypothesis we know that $\lim_{\tau\to +\infty}U(\mathfrak{p}(\tau))/\sqrt{\tau}=U(\mathfrak{P})$. 
Set $U(\mathfrak{P}):=(\ell_1,\ell')$ with $\ell_1\in\R$ et $\ell'\in\R^{d-1}$. As $U$ is isometric, we have $\|U(\mathfrak{P})\|=\|\mathfrak{P}\|$ and  $\ell_1=\langle U(\mathfrak{P}),\vec{e_1}\rangle=\langle \mathfrak{P},U^{-1}(\vec{e_1})\rangle = \langle \mathfrak{P},\vec{w}\rangle/\|\vec{w}\|$. Thus 
$$\|\ell'\|^2=\|\mathfrak{P}\|^2 - \langle \mathfrak{P},\vec{w}\rangle^2/\|\vec{w}\|^2 = \big(\|\vec{w}\|^2\|\mathfrak{P}\|^2-\langle \mathfrak{P},\vec{w}\rangle^2\big)/\|\vec{w}\|^2.$$
Next, let us set $U(\mathfrak{p}(\tau)):=\mathfrak{u}(\tau)=(\mathfrak{u}_{1}(\tau),\cdots,\mathfrak{u}_d(\tau))$, and $\mathfrak{u}'(\tau)=(\mathfrak{u}_2(\tau),\cdots,\mathfrak{u}_d(\tau))$. Then 
$$\displaystyle\int_{\R^d}\frac{k(u)e^{-i\langle u,\tau \vec{w}+\mathfrak{p}(\tau)\rangle}}{-i\langle \vec{w},u\rangle+\|u\|^2}\,du=
\int_{\R^d}\frac{k(U^{-1}(v))\,e^{-i(\tau\|\vec{w}\|+\mathfrak{u}_1(\tau))v_1}e^{-i\langle \mathfrak{u}'(\tau),v'\rangle}}{-i\|\vec{w}\|v_1+\|v\|^2}\,dv.$$
Observe that the function $k\circ U^{-1}$ is in $S(\R^d)$ and that 
\begin{equation} \label{u-v-tau}
\lim_{\tau\to +\infty} \mathfrak{u}_1(\tau)/\tau = 0 \quad \text{and} \quad \lim_{\tau\to +\infty} \mathfrak{u}'(\tau)/\sqrt{\tau} = \ell'.
\end{equation} 
Consequently Proposition~\ref{equivalent} will be established if we prove that, for any $\mu\in\R$, $\mu\neq0$, and any $h\in S(\R^d)$, we have:  
\begin{equation} \label{equivalent-bis}
\lim_{\tau\to+\infty}\tau^{(d-1)/2}\int_{\R^d}\frac{h(v)\,e^{-i(\tau \mu+\mathfrak{u}_1(\tau)) v_1}e^{-i\langle \mathfrak{u}'(\tau),v'\rangle}}{-i\mu v_1+\|v\|^2}\,dv\, = \frac{2\pi^{(d+1)/2}\, e^{- \|\ell\, '\|^2/4}}{|\mu|}\,h(0).
\end{equation}
\begin{lem} \label{equivalent-particulier} 
Property~(\ref{equivalent-bis}) holds with $h(v)=H(v) := e^{-(\mu^2 v_1^2 + \|v'\|^4)/2}$, namely: 
\begin{equation} \label{equivalent-cas-part}
J_{\mu}(\tau):=\int_{\R^d}\frac{e^{-(\mu^2 v_1^2 + \|v'\|^4)/2}\,e^{-i(\tau \mu+\mathfrak{u}_1(\tau)) v_1}e^{-i\langle \mathfrak{u}'(\tau),v'\rangle}}{-i\mu v_1+\|v'\|^2}\,dv\, \sim_{\tau\to +\infty}  \frac{2\pi^{(d+1)/2}
e^{-\|\ell\, '\|^2/4}}{|\mu|\, \tau^{(d-1)/2}}.
\end{equation}
\end{lem}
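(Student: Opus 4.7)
The plan is to compute $J_\mu(\tau)$ essentially in closed form, up to a rescaling, and then to pass to the limit $\tau\to+\infty$ by dominated convergence. The central idea is to linearise the denominator $-i\mu v_1+\|v'\|^2$ by the integral representation
$$
\frac{1}{-i\mu v_1+\|v'\|^2}=\int_{0}^{+\infty}e^{-s\|v'\|^2+is\mu v_1}\,ds\qquad(v'\neq 0),
$$
perform the Gaussian integration in $v_1$, and only afterwards rescale the remaining $(d-1)$-dimensional variable $v'$.

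First, I will verify that $J_\mu(\tau)$ is an absolutely convergent Lebesgue integral. The very same argument as in Remark~\ref{intdeunsurw} gives $|-i\mu v_1+\|v'\|^2|\geq |\mu v_1|^{3/4}\|v'\|^{1/2}$, so $1/|-i\mu v_1+\|v'\|^2|$ is integrable near $0$ in $\R^d$ for $d\geq 2$, while $e^{-(\mu^2 v_1^2+\|v'\|^4)/2}$ ensures decay at infinity. For each $v'\neq 0$, Fubini on $\R\times(0,+\infty)$ (where the integrand is genuinely absolutely integrable since $\|v'\|^2>0$) followed by the $v_1$-Gaussian integration yields, with $\alpha:=\tau\mu+\mathfrak{u}_1(\tau)$,
$$
\int_{\R}\frac{e^{-\mu^2 v_1^2/2-i\alpha v_1}}{-i\mu v_1+\|v'\|^2}\,dv_1=\frac{\sqrt{2\pi}}{|\mu|}\int_{0}^{+\infty}e^{-s\|v'\|^2}\,e^{-(s-\alpha/\mu)^2/2}\,ds.
$$
Plugging this back into $J_\mu(\tau)$ and then performing the two substitutions $v'=w/\sqrt{\tau}$ and $s=\tau+\mathfrak{u}_1(\tau)/\mu+\sigma$, which are designed precisely to expose the factor $\tau^{-(d-1)/2}$, one obtains
$$
\tau^{\frac{d-1}{2}}J_\mu(\tau)=\frac{\sqrt{2\pi}}{|\mu|}\int_{\R^{d-1}}F_\tau(w)\,G_\tau(w)\,dw,
$$
where $F_\tau(w):=e^{-\|w\|^4/(2\tau^2)-(1+\mathfrak{u}_1(\tau)/(\mu\tau))\|w\|^2-i\langle\mathfrak{u}'(\tau)/\sqrt{\tau},w\rangle}$ and $G_\tau(w):=\int_{-\tau-\mathfrak{u}_1(\tau)/\mu}^{+\infty}e^{-\sigma^2/2-\sigma\|w\|^2/\tau}\,d\sigma$.

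By (\ref{u-v-tau}), for each fixed $w$ one has $F_\tau(w)\to e^{-\|w\|^2-i\langle\ell',w\rangle}$ and $G_\tau(w)\to\sqrt{2\pi}$ as $\tau\to+\infty$. The passage to the limit under the $w$-integral will then be justified by dominated convergence: completing the square in $\sigma$ gives the cheap bound $G_\tau(w)\leq\sqrt{2\pi}\,e^{\|w\|^4/(2\tau^2)}$, which cancels exactly against the factor $e^{-\|w\|^4/(2\tau^2)}$ present in $F_\tau$; taking $\tau$ large enough so that $|\mathfrak{u}_1(\tau)/(\mu\tau)|\leq 1/2$ yields the uniform integrable majorant $|F_\tau(w)G_\tau(w)|\leq\sqrt{2\pi}\,e^{-\|w\|^2/2}$. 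The limit then is
$$
\tau^{(d-1)/2}J_\mu(\tau)\;\longrightarrow\;\frac{2\pi}{|\mu|}\int_{\R^{d-1}}e^{-\|w\|^2-i\langle\ell',w\rangle}\,dw=\frac{2\pi^{(d+1)/2}}{|\mu|}\,e^{-\|\ell'\|^2/4},
$$
which is exactly (\ref{equivalent-cas-part}).

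The main obstacle is not conceptual but a matter of finding the right two substitutions $v'=w/\sqrt{\tau}$ and $s=\tau+\mathfrak{u}_1(\tau)/\mu+\sigma$, as they must simultaneously extract the decay factor $\tau^{-(d-1)/2}$, convert the $v_1$-scale to a proper Gaussian density in $\sigma$ that concentrates mass on all of $\R$ in the limit, and produce a $\tau$-uniform majorant for dominated convergence. Once the rescaling is identified, every other step is routine.
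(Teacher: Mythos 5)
Your proof is correct and follows essentially the same route as the paper: linearise the denominator by the Laplace-type integral representation, perform the Gaussian integration in $v_1$, shift/rescale to extract the $\tau^{-(d-1)/2}$ factor, and conclude by dominated convergence plus the Gaussian identity. The only (cosmetic) differences are that you keep $\mu$ general instead of reducing to $\mu=1$ at the outset, and you split the paper's single change of variables into a shift in the auxiliary variable $s$ followed by the rescaling $v'=w/\sqrt{\tau}$.
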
  

Let us first assume that Lemma~\ref{equivalent-particulier} is valid, and let us deduce (\ref{equivalent-bis}) from (\ref{equivalent-cas-part}). To that effect, we shall proceed by difference and use Propositions~\ref{estimat-fourrier-q} and \ref{estimat-fourrier-q-bis}. For any $\tau>0$, we define 
$\mathfrak{d}(\tau) := (\tau\mu+\mathfrak{u}_1(\tau),\mathfrak{u}'(\tau))$. Moreover we set 
\begin{equation} \label{Delta}
\forall v\in\R^d\setminus\{0\}, \quad 
\Delta(v):=\frac{h(v)-h(0)H(v)}{-i\mu v_1+\|v\|^2}-h(0)\frac{v_1^2H(v)}{(-i\mu v_1+\|v\|^2)(-i\mu v_1+\|v'\|^2)}. 
\end{equation}
We have:   
\begin{eqnarray*}
\int_{\R^d}\frac{h(v)\,e^{-i(\tau \mu+\mathfrak{u}_1(\tau)) v_1}e^{-i\langle \mathfrak{u}'(\tau),v'\rangle}}{-i\mu v_1+\|v\|^2}\,dv-h(0)J_{\mu}(\tau)
&=&\int_{\R^d}e^{-i(\tau \mu+\mathfrak{u}_1(\tau)) v_1}e^{-i\langle \mathfrak{u}'(\tau),v'\rangle}\,\Delta(v)\, dv \\
&=& \int_{\R^d}e^{-i\langle \mathfrak{d}(\tau),\, v\rangle}\,\Delta(v)\, dv. 
\end{eqnarray*}
Thanks to (\ref{equivalent-cas-part}), Property~(\ref{equivalent-bis}) will be established provided that we prove the following:  
\begin{equation} \label{enonce}
\lim_{\tau\to +\infty} \tau^{\frac{d-1}{2}} \int_{\R^d}e^{-i\langle \mathfrak{d}(\tau),\, v\rangle}\,\Delta(v)\, dv=0.\\
\end{equation} 
Let us consider the functions ${G}_1$ and ${G}_2$ in $S(\R^d)$ defined by ${G}_1(v)=h(v)-h(0)H(v)$ and ${G}_2(v)=v_1^2H(v)$. One can easily check that ${G}_1(0)={G}_2(0)=0$, and 
\begin{subequations}
\begin{eqnarray*}
& & \forall j\in\{1,\ldots,d\},\quad (\partial_j {G}_2)(0)=0\quad \text{and}\quad (\partial_1^2 {G}_2)(0)=2, \\
& & \forall (j,\ell)\in\{1,\ldots,d\}^2\setminus\{(1,1)\},\quad (\partial^2_{j\ell}\,  {G}_2)(0)=0. 
\end{eqnarray*}
\end{subequations}
with $\partial^2_{j\ell} := \frac{\partial^2}{\partial x_j\partial x_\ell}$.
Next, let $g\in S(\R^d)$ such that $g(0)=- 1/\mu^2$, and define 
$$\forall v=(v_1,\ldots,v_d)\in\R^d,\quad s(v) := \big(-i\mu v_1+\|v\|^2\big)\, \big(-i\mu v_1+\|v'\|^2\big)\, g(v),$$
where $v'=(v_2,\ldots,v_d)$. One can easily check that $s(0)=0$, $\forall j\in\{1,\ldots,d\},\ (\partial_js)(0)=0$, $(\partial_1^2s)(0)=-2\mu^2g(0)=2$ and 
$\forall (j,l)\in\{1,\ldots,d\}^2\setminus\{(1,1)\},\ (\partial^2_{j\, l}s)(0)=0$. 
Therefore the first and second derivatives of the difference $ {G}_3:= {G}_2-s$ vanish at $0$. Rewrite $\Delta(v)$ as: 
$$\forall v\in\R^d\setminus\{0\}, \quad \Delta(v) = \frac{ {G}_1(v)}{-i\mu v_1+\|v\|^2}\ -\ h(0)\frac{ {G}_3(v)}{(-i\mu v_1+\|v\|^2)(-i\mu v_1+\|v'\|^2)}\ -\ h(0)g(v).$$
Let $\gamma\in \cC_b^{\infty}(\R^d,[0,1])$ be compactly supported and such that $\gamma_{|B}=1$ for some closed ball $B$ of $\R^d$ centered at 0. Since $\widehat{\gamma g}\in S(\R^d)$ and 
$\lim_{\tau\to +\infty}\|\mathfrak{d}(\tau)\|=+\infty$, we have  
$$\lim_{\tau\to +\infty} \|\mathfrak{d}(\tau)\|^{\frac{d-1}{2}}\int_{\R^d} 
e^{-i\langle \mathfrak{d}(\tau),\, v\rangle}\,\gamma(v)\, g(v)\, dv=0.$$
Moreover Propositions~\ref{estimat-fourrier-q} and \ref{estimat-fourrier-q-bis} yield the following properties: 
\begin{subequations}
\begin{eqnarray*}
& & \lim_{\tau\to +\infty} \|\mathfrak{d}(\tau)\|^{(d-1)/2}\int_{\R^d} 
e^{-i\langle \mathfrak{d}(\tau),\, v\rangle}\,\frac{\gamma(v) {G}_1(v)}{{-i\mu v_1+\|v\|^2}}\, dv=0, \\
& & \lim_{\tau\to +\infty} \|\mathfrak{d}(\tau)\|^{(d-1)/2}\int_{\R^d} 
e^{-i\langle \mathfrak{d}(\tau),\, v\rangle}\,\frac{\gamma(v) {G}_3(v)}{(-i\mu v_1+\|v\|^2)(-i\mu v_1+\|v'\|^2)}\, dv=0.
\end{eqnarray*}
\end{subequations}
Note that we have $\|\mathfrak{d}(\tau)\|\sim_{\tau\to+\infty} |\mu|\tau$ from $\mathfrak{d}(\tau) = (\tau\mu+\mathfrak{u}_1(\tau),\mathfrak{u}'(\tau))$ and (\ref{u-v-tau}). Hence:   
\begin{equation} \label{gammadelta}
\lim_{\tau\to +\infty} \tau^{(d-1)/2}\int_{\R^d}e^{-i\langle v,\mathfrak{d}(\tau)\rangle}\,\gamma(v)\, \Delta(v)\, dv=0. 
\end{equation}
Now observe that all the derivatives of $\gamma(\cdot)$ are bounded on $\R^d$ and that $\Delta$ is defined on $\R^d\setminus\{0\}$ as the quotient of a function of $S(\R^d)$ by a polynomial function (cf.~(\ref{Delta})). 
Since $1-\gamma(\cdot)$ vanishes on the closed ball $B$ (centered at 0), we deduce that $(1-\gamma)\Delta\in S(\R^d)$. So the Fourier transform of $(1-\gamma)\Delta$ is in $S(\R^d)$ and we have  
\begin{equation} \label{gammadelta2}
\lim_{\tau\to +\infty} \tau^{(d-1)/2}\int_{\R^d} 
e^{-i\langle \mathfrak{d}(\tau),\, v\rangle}\,(1-\gamma(v))\Delta(v)dv=0.
\end{equation}
Hence (\ref{enonce}) follows from (\ref{gammadelta}) and (\ref{gammadelta2}). As already mentioned, we have    
(\ref{enonce}) $\Rightarrow$ (\ref{equivalent-bis}), so that Proposition~\ref{equivalent} is proved.  
\end{proof}  
\noindent\begin{proof}{ of Lemma~\ref{equivalent-particulier}} It suffices to prove Lemma~\ref{equivalent-particulier} in case $\mu=1$ (if not, set $w_1=\mu v_1,\,w'=v'$). Since $\int_0^{+\infty} v_1^{-3/4}\, e^{- v_1^2/2} \,dv_1<\infty$ and $\int_{\R^{d-1}} \|v'\|^{- 1/2}\,  e^{-\|v'\|^4/2} \,dv'<\infty$ (use $\frac{1}{2}<d-1$ for the second integral), it follows from the Fubini-Tonelli theorem that $\int_{\R^d} \frac{e^{-(v_1^2 + \|v'\|^4)/2}}{|-iv_1+\|v'\|^2|}\,dv < \infty$ (cf.~Rk.~\ref{intdeunsurw}). Next we have:  
\begin{subequations}
\begin{eqnarray}
& & \mbox{$\forall n\in\N^*,\ \forall b\in\R^n,\quad (2\pi)^{-n/2}\int_{\R^n}e^{-\|x\|^2/2}\, 
e^{-i\langle x,b\rangle}dx=e^{- \|b\|^2/2}$} \label {Gauss} \\
& &  \mbox{$\forall v'\not=0,\quad \int_0^{+\infty}e^{(iv_1-\|v'\|^2)u}du=\frac{1}{-iv_1+\|v'\|^2}$}. \label{départ}
\end{eqnarray}
\end{subequations}
From (\ref{départ}), Fubini's theorem and $(\ref{Gauss})$, it follows that 
\begin{eqnarray*}J_{1}(\tau)&=&\int_{\R^{d-1}}e^{-i\langle \mathfrak{u}'(\tau),v'\rangle}e^{-\|v'\|^4/2}\left(\int_{\R}\frac{e^{-i(\tau +\mathfrak{u}_1(\tau)) v_1}e^{-v_1^2/2}}{-iv_1+\|v'\|^2}dv_1 \right)dv' \\   
&=&\int_{\R^{d-1}}e^{-i\langle\mathfrak{u}'(\tau),v'\rangle}e^{-\|v'\|^4/2}\left(\int_0^{+\infty}e^{-\|v'\|^2u}\,\big[\int_{\R}e^{-i(\tau +\mathfrak{u}_1(\tau) -u)v_1}e^{-v_1^2/2}dv_1\big]\,du\right)\,dv' \\
&=&\sqrt{2\pi}\int_{\R^{d-1}}e^{-i\langle \mathfrak{u}'(\tau),v'\rangle}e^{-\|v'\|^4/2}\left(\int_0^{+\infty}e^{-\|v'\|^2u}e^{-(\tau +\mathfrak{u}_1(\tau) -u)^2/2}du\right)dv'.
\end{eqnarray*}
By using the following obvious equality 
$$\|v'\|^2u + \big(\tau +\mathfrak{u}_1(\tau)-u\big)^2/2 = \frac{1}{2} \bigg[\big(u+\|v'\|^2-(\tau +\mathfrak{u}_1(\tau))\big)^2-\|v'\|^4 + 2\big(\tau +\mathfrak{u}_1(\tau)\big)\|v'\|^2\bigg],$$ 
and by setting $y'=\sqrt{2(\tau +\mathfrak{u}_1(\tau))}\,v'$ (for $\tau$ large enough), we obtain: 
\begin{eqnarray*}
J_{1}(\tau)/\sqrt{2\pi} &=&\int_{\R^{d-1}}e^{-i\langle \mathfrak{u}'(\tau),v'\rangle}e^{-(\tau +\mathfrak{u}_1(\tau))\|v'\|^2}\left(\int_0^{+\infty}e^{-\frac{1}{2}\big[u+\|v'\|^2-(\tau +\mathfrak{u}_1(\tau))\big]^2}du\right)dv'\\
&=& \int_{\R^{d-1}}e^{-i\langle \mathfrak{u}'(\tau),v'\rangle}e^{-(\tau +\mathfrak{u}_1(\tau))\|v'\|^2}\left(\int_{\|v'\|^2-(\tau +\mathfrak{u}_1(\tau))}^{+\infty}e^{-\frac{x^2}{2}}dx\right)dv'\\
&=& \big(2(\tau +\mathfrak{u}_1(\tau)\big)^{-\frac{d-1}{2}} \ \times \\
&\ & \quad \quad \quad 
\int_{\R^{d-1}} e^{-i\langle \frac{\mathfrak{u}'(\tau)}{\sqrt{2(\tau +\mathfrak{u}_1(\tau))}},y'\rangle \, - \, \frac{\|y'\|^2}{2}}\left(\int_{\frac{|y'|^2}{2(\tau +\mathfrak{u}_1(\tau))}-(\tau +\mathfrak{u}_1(\tau))}^{+\infty}e^{-\frac{x^2}{2}}dx\right)dy'.
\end{eqnarray*}
Finally, since $\tau +\mathfrak{u}_1(\tau)\sim_{\tau\to+\infty}\tau$ by (\ref{u-v-tau}), Lebesgue's theorem and $(\ref{Gauss})$ give 
\begin{eqnarray*}
\lim_{\tau\to +\infty}\tau^{(d-1)/2}J_{1}(\tau) &=& \sqrt{2\pi}\, 2^{-(d-1)/2}\int_{\R^{d-1}}e^{-i\langle 
\frac{\ell\,'}{\sqrt{2}},y'\rangle}\,e^{-\frac{\|y'\|^2}{2}}\left(\int_{-\infty}^{+\infty}e^{-\frac{x^2}{2}}dx\right)dy'\\
&=&\sqrt{2\pi}\, 2^{-(d-1)/2}\sqrt{2\pi}\,(\sqrt{2\pi})^{d-1}e^{-\|\ell\, '\|^2/4} \\
&=& 2\pi^{(d+1)/2}
e^{- \|\ell\, '\|^2/4}. 
\end{eqnarray*}
Hence the desired property for $J_{1}(\tau)$. 
\end{proof}
%
\section{Applications to additive functionals of Markov chains} \label{sect-markov}
In this section, $(X_n)_{n\geq 0}$ denotes a Markov chain with state space $(E,\cE)$, transition kernel $Q(x,dy)$ and initial distribution $\mu$. We assume that $Q$ admits an invariant probability measure, denoted by $\pi$. We denote by $\P_\mu$ the probability distribution of $(X_n)_{n\geq 0}$ with respect to the initial distribution $\mu$. The associated expectation is denoted by $\E_\mu$. Finally we consider a measurable function $\xi : E\r\R^d$ and we define the associated additive functional:  
\begin{equation} \label{Sn} 
\forall n\geq 1,\quad S_n = \xi(X_1)+\cdots+\xi(X_n).
\end{equation}
The next moment conditions on $\xi$ will ensure that $\xi$ is $\pi$-integrable. We can then define the first moment vector $\vec m$ of $\xi$ w.r.t.~to $\pi$. We assume that $\vec m$ is nonzero, that is  
\begin{equation} \label{moyenne}
\mbox{$\vec m=\E_\pi[\xi(X_0)] = \int_E\xi\, d\pi \neq 0.$}
\end{equation}
Set $\xi_c := \xi-\vec m$, and for each $n\geq1$ define the following random variable, taking values in the set of nonnegative symmetric $d\times d$-matrices:
$$\mbox{$S_{n,c}^{\otimes 2} = \sum_{k,\ell=1}^n \xi_c(X_k)\, \xi_c(X_\ell)^*$},$$ 
where $^*$ stands for the transposition. The next conditions on $\xi$ will also enable us to define the following nonnegative symmetric $d\times d$-matrix: 
\begin{equation} \label{sigma}
\Sigma = \vec m\cdot \vec m^* + \lim_n\frac{1}{n}\, \E_\mu[S_{n,c}^{\otimes 2}]. 
\end{equation}
Finally we use the following standard (Markov) nonlattice condition:     

\noindent {\bf Nonlattice Condition.}  \\ {\it 
We say that $\xi$ is nonlattice if there do not exist any $(b,H,A,\theta$) with $b\in\R^d$, $H\neq \R^d$ a closed subgroup in $\R^d$, $A\in\cE$ a $\pi$-full $Q$-absorbing \footnote{Recall that $A\in\cE$ is said to be $\pi$-full if $\pi(A)=1$, and $Q$-absorbing if $Q(a,A)=1$ for all $a\in A$.}set, and finally $\theta\, :\, E\r\R^d$ a bounded measurable function, such that we have }
$$\forall x\in A, \quad \xi(y)+\theta(y)-\theta(x)\in b+H\ \ Q(x,dy)-p.s.$$ 

Our first application concerns $\rho$-mixing Markov chains, namely: the strong ergodicity property (\ref{K1}) holds on the usual Lebesgue space $\L^2(\pi)$ (see \cite{rosen}). For instance, this condition is fulfilled if 
$(X_n)_{n\geq0}$ is ergodic, aperiodic and satisfies the so-called Doeblin condition. 

\noindent{\bf Hypothesis H$_\rho$.} {\it The Markov chain $(X_n)_{n\geq0}$ is $\rho$-mixing and  $\mu=\pi$.\footnote{The stationarity condition $\mu = \pi$ may be replaced with $d\mu = \phi d\pi$ provided that the density function $\phi$ is in $\L^{p}(\pi)$ for some $p>p_{\, \varepsilon_0}$, with  
$p_{\, \varepsilon_0}\in(1,+\infty)$ depending on $\varepsilon_0$. See \cite[cor.~4]{guiher} for details.} 
Moreover $\xi$ is nonlattice and satisfies the following moment condition  }
\begin{equation} \label{mom-rho} 
\exists \varepsilon_0>0,\quad \E_\pi[\|\xi(X_0)\|^{m_d+\varepsilon_0}] < +\infty.
\end{equation}

Our  second application concerns $V$-geometrically ergodic Markov chains, namely: given some unbounded function $V : E\r [1,+\infty[$, Property (\ref{K1}) holds on the  weighted supremum-normed space 
$(\cB_V,\|\cdot\|_V)$ composed of all the measurable functions $f : E\r\C$ satisfying : $\, \|f\|_V : = \sup_{x\in E} |f(x)|/V(x) <\infty$, see \cite{mey}. In particular we have $\pi(V)<\infty$. 

\noindent{\bf Hypothesis H$_V$.} {\it The Markov chain $(X_n)_{n\geq0}$ is $V$-geometrically ergodic and $\mu(V)<\infty$.\footnote{For instance this condition holds when $\mu=\pi$ or $\mu$ is the Dirac distribution $\delta_x$ at some $x\in E$.} Moreover $\xi$ is nonlattice and satisfies the following domination condition: }
\begin{equation} \label{mom-v-geo}  
\exists\, \varepsilon_0>0,\ \sup_{x\in\R^d}\frac{\|\xi(x)\|^{m_d+\varepsilon_0}}{V(x)} < \infty.
\end{equation}

Our last application concerns Lipschitz iterative models. Here we suppose that $(E,d)$ is a complete metric space in which every closed ball is compact. The space $E$ is equipped with its borel $\sigma$-algebra $\cE$. Consider a measurable space  $(G,\cG)$ and a sequence $\{\vartheta_n\}_{n\geq 1}$ of $G$-valued i.i.d.~random variables. Let $F : (E\times G,\cE\otimes \cG)\r (E,\cE)$ be jointly measurable and Lipschitz continuous in the first variable. Then, given $X_0$ an $E$-valued r.v.~independent of the sequence $\{\vartheta_n\}_{n\geq 1}$, the associated Lipschitz iterative model (LIM) is the sequence $(X_n)_{n\in\N}$ of r.v.~recursively defined as follows (starting from $X_0$):   
\begin{equation} \label{ifs-formule} 
\forall n\geq 1,\ \ \ X_n := F(X_{n-1},\vartheta_n).
\end{equation}
The LIM $(X_n)_{n\in\N}$ is said to be strictly contractive when 
\begin{equation} \label{ifs-cont}
\cC : = \sup_{(x,y)\in E^2,\, x\neq y} \frac{d\big(F(x,\vartheta_1),F(y,\vartheta_1)\big)}{d(x,y)} < 1\ \ \text{ almost surely.} 
\end{equation}
\noindent{\bf  Hypothesis H$_{Lim}$.} {\it The Markov chain $(X_n)_{n\in\N}$ is a strictly contractive LIM  satisfying 
\begin{equation} \label{moment-ite} 
\exists s\geq0,\ \exists \varepsilon_0>0,\quad \E\left[d\big(F(x_0,\vartheta_1),x_0\big)^{(s+1)m_d+\varepsilon_0}\, \right] < \infty 
\end{equation}
where $x_0$ is some point in $E$. Moreover we assume that $\E\big[(d(X_0,x_0)^{(s+1)m_d + \varepsilon_0}\big] < \infty$\footnote{This condition holds when $X_0\sim\delta_x$. Under Assumptions~(\ref{ifs-cont}) (\ref{moment-ite}), it holds for $X_0\sim\pi$, see \cite{duf,benda}.} and that $\xi$ is nonlattice and satisfies  }
\begin{equation} \label{xi-iterative} 
\exists S\geq 0,\ \forall (x,y)\in E\times E,\ \ \|\xi(x)-\xi(y)\| \leq S\, d(x,y)\, \big[1+d(x,x_0)+d(y,x_0)\big]^s.
\end{equation} 
\begin{cor} \label{cor-ex}
Assume that one of the set of Hypotheses {\bf H$_\rho$}, {\bf H$_V$} or {\bf H$_{Lim}$} holds. Then $\vec m$ in (\ref{moyenne}) is well-defined, the limit in (\ref{sigma}) exists, and $\Sigma$ is invertible. Moreover, for any set $A\in B(\R^d)$ whose boundary has zero Lebesgue-measure, we have: 
\begin{equation} \label{ren-f=1} 
\lim_{\tau \to +\infty}\,
(2\pi \tau)^{\frac{d-1}{2}}\sum_{n=1}^{+\infty}\P_\mu\big(S_n - \tau\, \vec m \in A\big) = 
\frac{L_d(A)}{(\det\Sigma)^{\frac{1}{2}}\|\Sigma^{-\frac{1}{2}}\,\, \vec m\|}.  
\end{equation}
\end{cor}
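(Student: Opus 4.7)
The plan is to apply Theorem \ref{ren-theo} to $(X_n,S_n)_{n\geq0}$ with $f=1_E$ and $\mathfrak{a}(\tau)=\tau\vec{m}$, so that $\mathfrak{A}=0$ and the constant $C$ collapses to $L(0)(\det\Sigma)^{-1/2}\|\Sigma^{-1/2}\vec{m}\|^{-1}$. Once Hypothesis $(\mathbf{H})$ is verified with $L(0)=1$, Theorem \ref{ren-theo} yields weak convergence of $V_\tau$ to $(\det\Sigma)^{-1/2}\|\Sigma^{-1/2}\vec{m}\|^{-1}L_d$, and the portmanteau theorem then upgrades this to the pointwise formula (\ref{ren-f=1}) on every $A\in B(\R^d)$ whose boundary has zero Lebesgue measure. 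The real content is therefore the verification of Hypothesis $(\mathbf{H})$ for the MRW $(X_n,S_n)$ with $f=1_E$.

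The tool is the weak spectral method of \cite{fl}. Introduce the Fourier operators
\[
(Q_1(t)f)(x) := \int_E e^{i\langle t,\xi(y)\rangle}\,f(y)\,Q(x,dy), \quad t\in\R^d,
\]
so that $\E_\mu[e^{i\langle t,S_n\rangle}]=\mu(Q_1(t)^n 1_E)$. These operators act on a Banach space $\cB$ chosen according to the hypothesis: an $\L^p$-type space under $\mathbf{H}_\rho$, the weighted supremum space $\cB_V$ under $\mathbf{H}_V$, or one of the weighted Lipschitz spaces of \cite{fl} under $\mathbf{H}_{Lim}$. By (\ref{K1}), $Q_1(0)=Q$ is quasi-compact on $\cB$ with $1$ as a simple dominant eigenvalue and rank-one projector $\Pi_0 f=\pi(f)1_E$. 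A Keller-Liverani style perturbation argument, cast in the weak-spectral framework of \cite{fl}, gives on some ball $B_R$ a decomposition
\[
Q_1(t)^n = \lambda(t)^n\,\Pi(t) + N(t)^n
\]
with $\lambda(0)=1$, $\|N(t)^n\|\leq C\kappa^n$ for some $\kappa<1$, and $t\mapsto\lambda(t),\Pi(t),N(t)$ of class $\cC_b^m$ for some $m>m_d$. Applying $\mu$ yields (\ref{X_n-S-n}) with $L(t)=\mu(\Pi(t)1_E)$, $L(0)=\pi(1_E)=1\neq0$, and $R_n(t)=\mu(N(t)^n 1_E)$ summing uniformly to a $\cC_b^m$ function. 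Differentiating $\lambda$ at $0$ simultaneously proves that (\ref{moyenne}) and the limit in (\ref{sigma}) are well-defined and gives $\vec{m}=-i\nabla\lambda(0)$, $\Sigma=-Hess\,\lambda(0)$.

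Part (ii) of $\cR(m)$ on annuli $K_{r,b}$ is where the nonlattice condition enters: a Guivarc'h-Hardy type argument, adapted in \cite{bab,guiher}, shows that the spectral radius of $Q_1(t)$ on $\cB$ is strictly less than $1$ for every $t\neq0$, uniformly on compact subsets of $\R^d\setminus\{0\}$; hence $\sum_n Q_1(t)^n$ converges in $\cL(\cB)$ uniformly on $K_{r,b}$, with $\cC_b^m$ regularity transferred from $t\mapsto Q_1(t)$. Positive-definiteness of $\Sigma$ is the final use of the nonlattice hypothesis: the vanishing of $\langle\Sigma v,v\rangle$ for a unit $v\in\R^d$ would, via the spectral representation, yield a coboundary decomposition of $\langle v,\xi\rangle$ contradicting the nonlattice condition, giving invertibility of $\Sigma$.

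The principal obstacle is the regularity step $t\mapsto Q_1(t)\in\cC_b^m(B_R,\cL(\cB))$ with $m>m_d$ under only $m_d+\varepsilon_0$-order moments on $\xi$, rather than the $\lfloor m_d\rfloor+1$ that naive strong-operator differentiation would require. This is precisely what the weak framework of \cite{fl} achieves: successive derivatives of $Q_1(\cdot)$ are controlled not on a single space $\cB$ but as continuous maps $\cB\to\widetilde{\cB}$ between two spaces, a setup that remains compatible with Keller-Liverani perturbation and delivers the $m_d+\varepsilon_0$ moment exponent advertised in the introduction. The three hypotheses $\mathbf{H}_\rho$, $\mathbf{H}_V$, $\mathbf{H}_{Lim}$ differ only in the specific choice of the pair $(\cB,\widetilde{\cB})$ and in the concrete moment/domination estimate used to bound the operator-valued differences, but the overall scheme is identical.
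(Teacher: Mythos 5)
Your outline is correct in substance and would deliver the result, but it is organized quite differently from the paper's actual argument. You propose to apply the weak spectral method of \cite{fl} \emph{directly} to the non-centered additive functional $S_n$, building the decomposition $Q_1(t)^n=\lambda(t)^n\Pi(t)+N(t)^n$ from scratch and reading off $\vec m$, $\Sigma$, the annulus estimate and the positive-definiteness from that construction. The paper instead avoids redoing any of this work: it centers the functional, setting $\xi_c:=\xi-\vec m$ and $S_{n,c}=\sum_k\xi_c(X_k)$, observes the exact algebraic identity $\E_\mu[e^{i\langle t,S_n\rangle}]=e^{in\langle t,\vec m\rangle}\E_\mu[e^{i\langle t,S_{n,c}\rangle}]$, and then simply \emph{cites} \cite[Sect.~4]{guiher} — where Hypothesis $\cR(m)$ was already verified for the centered case under exactly these three Markov hypotheses — to obtain $\lambda_c,L_c,R_{n,c}$. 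The data $\lambda(t)=e^{i\langle t,\vec m\rangle}\lambda_c(t)$, $L=L_c$, $R_n(t)=e^{in\langle t,\vec m\rangle}R_{n,c}(t)$ then inherit $\cR(m)$ immediately, and the identifications $-i\nabla\lambda(0)=\vec m$, $-Hess\,\lambda(0)=\vec m\vec m^*+\Sigma_c=\Sigma$, $L(0)=1$ drop out of the product rule using $\nabla\lambda_c(0)=0$. The positive-definiteness of $\Sigma_c$ (hence of $\Sigma$) is likewise quoted from \cite[p.~437]{fl} rather than re-derived by a coboundary argument as you suggest. In short: your route re-establishes the spectral facts for the non-centered kernel, while the paper transfers them from the centered case already in the literature; the centering trick is what makes the paper's proof a short reduction rather than a fresh verification, and it is also what gives the clean identification of $\Sigma$ in (\ref{sigma}) with $-Hess\,\lambda(0)$ without further computation.
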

\begin{proof}{} We apply Theorem~\ref{ren-theo} with $f=1_E$. The fact that $\vec m$ in (\ref{moyenne}) is well-defined is obvious under Hypotheses {\bf H$_\rho$} or  {\bf H$_V$}. Under Hypothesis {\bf H$_{Lim}$}, we have $\|\xi(\cdot)\| \leq \|\xi(x_0)\| + S(1+d(\cdot,x_0))^{s+1}$, so that $\|\xi(\cdot)\|$ is $\pi$-integrable under Conditions~(\ref{ifs-cont}) (\ref{moment-ite}), see \cite{duf,benda}. The others conditions of Hypothesis~{\bf (H)} follow from the results proved in \cite{guiher} (centered case). Indeed, note that $\xi_c = \xi-\vec m$ is $\pi$-centered. Then the existence of $\Sigma_c : = \lim_n\frac{1}{n}\, \E_\mu[S_{n,c}^{\otimes 2}]$ is proved in \cite[Sect.~4]{guiher} under the three above  hypotheses. Moreover, since $\xi$ is nonlattice, so is $\xi_c$. The matrix $\Sigma_c$ is then definite from  \cite[p.~437]{fl}, thus so is $\Sigma$ in (\ref{sigma}). Now define $S_{n,c} = \xi_c(X_1)+\cdots+\xi_c(X_n)$. We have 
\begin{equation} \label{fc-cas-cent-dec}
\forall n\geq 1,\ \forall t\in\R^d,\quad \E_\mu[e^{i\langle t,S_n\rangle}] = e^{in\langle t,\, \vec m\rangle}\, \E_\mu[e^{i\langle t,S_{n,c}\rangle}].
\end{equation}
From \cite[Sect.~4]{guiher}, under anyone of Hypotheses {\bf H$_\rho$}, {\bf H$_V$} or {\bf H$_{Lim}$}, the sequence $(X_n,S_{n,c})_{n\geq0}$ satisfies Hypothesis~$\cR(m)$ for some $m>m_d$. Denote by $\lambda_c(\cdot)$, $L_c(\cdot)$, and $R_{n,c}(\cdot)$ the associated complex-valued functions\footnote{They are derived from the perturbation theorem due to Keller and Liverani \cite{keli}: $\lambda_c(t)$ is the perturbed eigenvalue of the Fourier operators associated with $(X_n,S_{n,c})_{n\geq0}$, while $L_c(\cdot)$ and $R_{n,c}(\cdot)$ are linked to some spectral projections.}. 
Then the sequence $(X_n,S_{n})_{n\geq0}$ satisfies Hypothesis~$\cR(m)$ with $\lambda(\cdot)$, $L(\cdot)$, and $R_{n}(\cdot)$ given by: 
\begin{equation} \label{spect-cas-cent-dec}
\lambda(t) := e^{i\langle t,\, \vec m\rangle}\lambda_c(t),\quad L(t) = L_c(t),\quad R_{n}(t) := e^{in\langle t,\, \vec m\rangle}\,R_{n,c}(t).
\end{equation}
The fact that $\sum_{n\geq1}R_{n}(\cdot)$ (resp.~$\sum_{n\geq1}\E[e^{i\langle t, S_{n}\rangle}]$) converges on the ball $B_R$ (resp.~on the annulus $K_{r,b}$) and defines a function in $\cC_b^m(B_R,\C)$ (resp.~in $\cC_b^m(K_{r,b},\C)$) can be easily derived from (\ref{spect-cas-cent-dec}) (resp.~(\ref{fc-cas-cent-dec})) and the spectral formulas given in \cite{guiher}. Finally we know that $\lambda_c(0)=1$, $\nabla\lambda_c(0) = 0$ (since $\xi_c$ is $\pi$-centered), and that $L_c(0)=1$ in case $f=1_E$. Thus $-i\nabla\lambda(0) = \vec m$, $Hess\, \lambda(0) = - (\Sigma_c + \vec m\cdot \vec m^*)$, and $L(0)=1$. Consequently Hypothesis~{\bf (H)} is fulfilled. 
\end{proof}

Actually, under each of hypotheses {\bf H$_\rho$}, {\bf H$_V$} or {\bf H$_{Lim}$},  Theorem~\ref{ren-theo} applies for a large class of nonnegative functions $f$ (use the refinements stated in \cite{guiher}). Moreover Corollary~\ref{cor-ex} can be extended to the lattice case, see \cite{denis-these}. 

The domination or moment condition on $\xi$ in Hypotheses {\bf H$_\rho$}, {\bf H$_V$} or {\bf H$_{Lim}$} involves the optimal order $m_d$ of the i.i.d.~case \cite{stam} (up to $\varepsilon_0>0$). Corollary~\ref{cor-ex} can be derived from the results of \cite{bab} but under much stronger moment conditions (the comparisons with \cite{bab} in terms of moment conditions are the same as in \cite{guiher}). 

For example, consider in $\R^3$ the autoregressive model $X_n = A X_{n-1} +\vartheta_n$, where $X_0,\vartheta_1,\vartheta_2,\ldots$ are $\R^3$-valued i.i.d.~random variables, and $A$ is a contractive matrix of order~3. Clearly, taking $d(x,y) = \|x-y\|$, the sequence $(X_n)_{n\geq0}$ is a strictly contractive LIM. Now let us consider $S_n = X_1+\ldots+X_n$ (i.~e.~$\, \xi(x)=x$). Condition~(\ref{xi-iterative}) is fulfilled with $S=1$ and $s=0$. Consequently, if we have $\E\big[\, \|X_0\|^{2 + \varepsilon_0} + \|\vartheta_1\|^{2+\varepsilon_0}\big] <+\infty$ for some $\varepsilon_0>0$,  
then (\ref{moment-ite}) holds and $X_0$ satisfies the moment condition stated in Hypothesis {\bf H$_{Lim}$}. 
Then, if $\xi(x)=x$ is non-lattice, we have (\ref{ren-f=1}) with $\vec m =  \E_\pi[X_0]$ and $\Sigma$  defined in (\ref{sigma}).

\noindent{\it Additional remarks in Markov setting.} \\
\indent Theorem~\ref{ren-theo} may be applied to general Markov random walks $(X_n,S_n)_{n\geq0}$, see (\ref{def-mrw}). Above we have only considered the special instance of additive functionals $S_n = \xi(X_1)+\cdots+\xi(X_n)$. For general MRWs, Hypothesis~$\cR(m)$ involves the increments $Y_n := S_n-S_{n-1}$ in place of the function $\xi$, e.g.~see \cite{DLJ} for MRWs associated with $\rho$-mixing driving Markov chains. In particular, for the three above Markov models, Hypothesis~$\cR(m)$ is investigated in \cite{DLJ, JLV} for bivariate additive functionals of type $S_n = \sum_{k=1}^n \psi(X_{k-1},X_k)$. \\
\indent Under some strong non-lattice conditions, asymptotic refinements of (\ref{ren-f=1}) have been obtained under the $V$-geometrical ergodicity assumption in \cite{fuhlai}, and under the uniform ergodicity assumption in  \cite{uchi}. By using the weak spectral method \cite{fl}, some results of \cite{fuhlai,uchi} could be improved in terms of moment conditions.  \\
\indent The notion of convergence cone (see \cite{bab}) is not investigated here. In fact, this study requires to define the Laplace kernels (in place of the Fourier kernels). The definition of such kernels needs some exponential (operator-type) moment conditions. Consequently the usual spectral method applies as stated in \cite{bab}.  

\appendix 

\section{Proof of Propositions~\ref{estimat-fourrier-q}-\ref{estimat-fourrier-q-bis}} \label{B}
This appendix completes and details some arguments summarized in \cite{bab}. Roughly speaking, the dyadic decomposition consists in writing an integral on $\R^d$ as the sum of integrals on the dyadic annuli $D_n:=\{2^n \leq \|x\| < 2^{n+1} \}$, $n\in\Z$. Here some asymmetric annuli (in place of $D_n$) must be considered to take  into account the mean direction $\vec{m}=\|\vec m\|\vec e_1$. Let us first specify some notations. 

For any real number $m>0$, we set $\tau := m - \lfloor m\rfloor$, and for any open subset ${\cal O}$ of $\R^d$, any $K\subset \cal O$ and $f\in \cC_b^m({\cal O},\C)$, we define 
$$\big\|f\big\|_{m,K}=\sum_{|\beta|\leq \lfloor m\rfloor}\big\|\partial^{\beta}f\big\|_{0,K}+\sum_{|\beta|=\lfloor m\rfloor}\big[\partial^{\beta}f\big]_{\tau,K}.$$
Let us recall that, for $x=(x_1,x_2,\ldots,x_d)\in\R^d$, we have set $x'=(x_2,\ldots,x_d)\in\R^{d-1}$ and  $w(x)=-ix_1+\|x'\|^2$. Next define the following: 
\begin{subequations}
\begin{eqnarray}
& & \text{for}\ 0<\omega_0<\omega'_0:\quad \Gamma_{0,\omega_0,\omega'_0}:=\big\{x\in\R^d\ : \ \omega_0\leq |w(x)|\leq \omega'_0\big\} \label{Gamma0}  \\
& & \forall k\in\Z,\ \forall x=(x_1,\ldots,x_d) \in\R^d,\quad 
D_k(x) := (\frac{x_1}{4^k},\frac{x_2}{2^k},\ldots,\frac{x_d}{2^k}) \label{D-k} \\
& & \forall k\in\Z,\ \Gamma_{k,\omega_0,\omega'_0}:=D_k(\Gamma_{0,\omega_0,\omega'_0})=\big\{x\in\R^d\ :\ \frac{\omega_0}{4^{k}}\leq |w(x)|\leq \frac{\omega'_0}{4^k}\big\}, \label{Gamma-k} \\
& & \forall k\in\Z,\ \Gamma_k:=\Gamma_{k,\frac{1}{4},1} = \big\{x\in\R^d\ :\ \frac{1}{4^{k+1}}\leq |w(x)|\leq \frac{1}{4^k}\big\} \label{defGammak} \\
& & \widetilde{\Gamma_0}:=\Gamma_{0,\frac{1}{8},2}\subset \Gamma_{-1}\cup\Gamma_0\cup\Gamma_1.
\end{eqnarray}
\end{subequations}
We will repeatedly use the following obvious inclusions: 
\begin{subequations}
\begin{eqnarray}
& & \forall k\in\N,\quad \Gamma_{k,\omega_0,\omega'_0}\subset B\big(0,(\omega'^2_0+\omega'_0)^{1/2}/2^k\big)
\label{inclusion Gammak} \\
& & \forall k\in \Z,\quad  D_k(\widetilde{\Gamma_0})\subset \Gamma_{k-1}\cup \Gamma_k\cup \Gamma_{k+1}. 
\label{inclusion DkGamma0tilde}
\end{eqnarray}
\end{subequations}

Now let us fix a real number $r>0$ and $k_0\in\N^*$ such that $\frac{\sqrt{2}}{2^{k_0-1}}< r$. For each $k\geq k_0-1$ we clearly have $\Gamma_{k}\subset B(0,\frac{\sqrt{2}}{2^{k}})\subset B(0,\frac{\sqrt{2}}{2^{k_0-1}})\subset B_{r}$. Moreover we have   
\begin{equation} \label{bis-Gamma-k-inclusion} 
\mbox{$\forall k\geq k_0,\quad D_k(\widetilde{\Gamma_0})\subset B\big(0,\sqrt{2}/2^{k-1}\big) \subset B_{r}$}. 
\end{equation}
For any function $u : B_{r} \r \C$, we define the following functions: 
$$\forall k\in \N,\ \forall x\in B_{r},\quad u_k(x)=u(D_kx).$$
Observe that, if $k\geq k_0$, then $u$ is defined on $D_k(\widetilde{\Gamma_0})$, so that $u_k$ is well-defined on $\widetilde{\Gamma_0}$. 
%
\subsection{Construction of a partition of the unity on  $\R^d\setminus\{0\}$} \label{b2}
Starting with $\widetilde {\Gamma_0}=\big\{x
\in\R^d\ :\ \frac{1}{8}\leq |w(x)|\leq 2\big\}$, let us define for all $k\in \Z$  
\begin{equation} \label{tile-gamma-k}
\mbox{$\widetilde{\Gamma_k}:=D_k(\widetilde{\Gamma_0})=\big\{x
\in\R^d\ :\ \frac{1}{8}\, \frac{1}{4^{k}}\leq |w(x)|\leq 2\, \frac{1}{4^k}\big\}$}.
\end{equation}
$\widetilde{\Gamma_k}$  contains $\Gamma_k$ (cf. (\ref{defGammak})). Now let $\gamma\in\cC_b^{\infty}(\R^d,\R^+)$ be  compactly supported in $\widetilde{\Gamma_0}$, such that 
$$\forall x\in \Gamma_0,\ \ \gamma(x)=1.$$
Note that $\gamma\circ D_{-k}\in\cC_b^{\infty}(\R^d,\R^+)$ is compactly supported in  $\widetilde{\Gamma_k}$. Next we set
$$\forall x\in\R^d,\quad \phi(x)=\sum_{k\in\Z}\gamma(D_{-k}x).$$ 
We have $\phi(0)=0$, and for $x\neq0$: 
$$\mbox{$x\in\widetilde{\Gamma_k}\ \ \Leftrightarrow\ \ k\in[\frac{-\ln8-\ln |w(x)|}{\ln4},\frac{\ln2-\ln |w(x)|}{\ln4}]$}.$$ 
Since the length of the previous interval is $2$, a point $x\neq0$ belongs at most to three sets among the $\widetilde{\Gamma_k}$'s, and since $\R^d\setminus\{0\}=\cup_{p\in\Z}\Gamma_p=\cup_{p\in\Z}\widetilde{\Gamma_p}$, we have: $1\leq\phi(x)<+\infty$. Notice also that we have by definition of $\phi$ 
\begin{equation}\label{propdephi}
\forall \ell\in\Z,\,\, \phi\circ D_\ell=\phi.
\end{equation}  
\begin{apro} \label{partition}
The positive function $\rho:= \gamma/\phi$ is infinitely differentiable on $\R^d\setminus\{0\}$. Moreover $\rho$ vanishes on $\R^d\setminus \big(\widetilde{\Gamma_0}\cup\{0\}\big)$, and we have: $\forall x\in\R^d\setminus\{0\},\  \sum_{k\in\Z}\rho(D_{-k}x) = 1$. 
\end{apro}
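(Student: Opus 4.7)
The statement has three conclusions: smoothness of $\rho$ on $\R^d\setminus\{0\}$, its support property, and the partition-of-unity identity. The support claim is immediate from the definition $\rho=\gamma/\phi$ and the fact that $\gamma$ is supported in $\widetilde{\Gamma_0}$, so the real work lies in showing that $\phi$ is smooth and bounded below by a positive constant on $\R^d\setminus\{0\}$.

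My plan is first to exploit the key identity $w(D_{-k}x)=4^{-k}w(x)$, which follows directly from the definition of $D_k$ in (\ref{D-k}) and of $w$ in (\ref{w}): under the anisotropic scaling $D_{-k}$, the modulus $|w|$ is multiplied by $4^{-k}$. Since $\gamma$ is compactly supported in $\widetilde{\Gamma_0}=\{y:\tfrac18\le|w(y)|\le 2\}$, the term $\gamma(D_{-k}x)$ can be nonzero only when $\tfrac18\le 4^{-k}|w(x)|\le 2$, that is, only for those integers $k$ lying in an interval of length exactly $2$ depending on $|w(x)|$. Hence on any open set where $|w|$ stays between two positive constants (such sets cover $\R^d\setminus\{0\}$), at most three terms in the series defining $\phi$ are nonzero. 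The sum is therefore locally finite, and since each term $\gamma\circ D_{-k}$ is $\cC^\infty$, the function $\phi$ itself is $\cC^\infty$ on $\R^d\setminus\{0\}$.

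Next I would establish the lower bound $\phi(x)\ge 1$ for $x\ne 0$. The decomposition $\R^d\setminus\{0\}=\bigcup_{k\in\Z}\Gamma_k$ (a consequence of the definition (\ref{defGammak})) means that every $x\ne 0$ lies in at least one $\Gamma_k$; applying $D_{-k}$ then gives $D_{-k}x\in\Gamma_0$, on which $\gamma\equiv 1$ by hypothesis. Thus at least one term in the sum equals $1$, giving $\phi(x)\ge 1$. Combined with smoothness this shows $\rho=\gamma/\phi\in\cC^\infty(\R^d\setminus\{0\})$, and its support is inherited from $\gamma$.

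Finally, the partition identity is a one-line computation using (\ref{propdephi}): for any $x\ne 0$,
$$\sum_{k\in\Z}\rho(D_{-k}x)=\sum_{k\in\Z}\frac{\gamma(D_{-k}x)}{\phi(D_{-k}x)}=\frac{1}{\phi(x)}\sum_{k\in\Z}\gamma(D_{-k}x)=\frac{\phi(x)}{\phi(x)}=1,$$
where the middle equality uses the invariance $\phi\circ D_{-k}=\phi$. The only mildly delicate step in the whole argument is the local-finiteness verification, which reduces cleanly to the scaling formula $w(D_{-k}x)=4^{-k}w(x)$; everything else is bookkeeping.
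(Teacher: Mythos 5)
Your proposal follows essentially the same route as the paper: exploit the dyadic-like scaling of $w$ to show the sum defining $\phi$ is locally finite (hence $\phi\in\cC^\infty$ on $\R^d\setminus\{0\}$), use $\R^d\setminus\{0\}=\bigcup_k\Gamma_k$ to get $\phi\ge 1$, and then obtain the partition identity from the invariance $\phi\circ D_\ell=\phi$. The paper simply makes the local finiteness completely explicit by writing $\R^d\setminus\{0\}=\bigcup_p\mathop{\rm Int}(\Gamma_p\cup\Gamma_{p+1})$ and checking that $\phi=\sum_{k=p-1}^{p+2}\gamma\circ D_{-k}$ on each piece, while you argue more abstractly via ``at most three indices $k$ per point''; the two are the same idea.

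One small but worth-fixing slip: the scaling identity you quote has the wrong sign. From $D_{-k}x=(4^kx_1,2^kx_2,\dots,2^kx_d)$ one gets $w(D_{-k}x)=4^{k}w(x)$, not $4^{-k}w(x)$ (equivalently $w(D_kx)=4^{-k}w(x)$, which is the form implicit in (\ref{Gamma-k})). Accordingly, the condition for $\gamma(D_{-k}x)\neq 0$ is $\tfrac18\le 4^{k}|w(x)|\le 2$. This does not affect your conclusion --- the admissible $k$ still fill an interval of length $2$, so at most three integers --- but the stated identity should be corrected. Also, the phrase ``at most three terms on any open set where $|w|$ stays between two positive constants'' is a bit loose: this holds pointwise, and on small enough neighborhoods (where $|w|$ varies by less than a factor of $4$); for arbitrary such open sets the number of nonvanishing terms is finite but may exceed three, which is all that is needed for smoothness.
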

\begin{proof}{}
For $p\in \Z$, we denote by $Int\, (\Gamma_p\cup\Gamma_{p+1})$ the interior of $\Gamma_p\cup\Gamma_{p+1}$. Since $\displaystyle \R^d\setminus\{0\}=\cup_{p\in\Z}\, Int\, (\Gamma_p\cup\Gamma_{p+1})$, it suffices to check that $\phi$ is $\cC^{\infty}$ on the open subset $Int(\Gamma_p\cup\Gamma_{p+1})$. 
Let $x\in\, Int(\Gamma_p\cup\Gamma_{p+1})$, namely: $\frac{1}{4^{p+2}}<|w(x)|<\frac{1}{4^p}$, and let $k\leq p-2$. Since  $\frac{1}{4^p}\leq\frac{1}{16}\frac{1}{4^k}$, we have $|w(x)|<\frac{1}{8}\frac{1}{4^k}$. Thus $x\not\in \widetilde{\Gamma_k}$, and so $\gamma(D_{-k}(x))=0$. Similarly, if $k\geq p+3$, then $4\frac{1}{4^k}\leq\frac{1}{4^{p+2}}$, thus $|w(x)|>2\frac{1}{4^k}$, and so $x\not\in \widetilde{\Gamma_k}$ and  $\gamma(D_{-k}(x))=0$. It follows that the restriction of $\phi$ to $Int(\Gamma_p\cup\Gamma_{p+1})$ is the finite sum  $\sum_{k=p-1}^{p+2}\gamma\circ D_{-k}$. This proves the first point of Proposition~\ref{partition}. The two last assertions are obvious. 
\end{proof}
\subsection{Proof of Proposition~\ref{estimat-fourrier-q}} \label{b3}
Recall that $r>0$, that $k_0\in\N^*$ is such that $\frac{\sqrt{2}}{2^{k_0-1}}< r$, and that $\Gamma_k\subset B_{r}$ for all $k\geq k_0-1$. Define 
$${\cal C} := \cup_{j\geq k_0+1}\Gamma_j = \big\{x\in\R^d, 0<|w(x)|\leq 4^{-(k_0+1)}\big\}$$
and set $r'=2^{-1/2}\, 4^{-(k_0+1)}$. Then it can be easily seen that 
$B(0,r')\setminus\{0\}\subset {\cal C} \subset \overline{\cal C} \subset B_{r}$. Let us consider $\eta\in \cC_b^{\infty}(\R^d,\R)$, compactly supported in $\overline{\cal C}$, such that we have: 
$$\forall x\in B(0,r'),\quad \eta(x)=1.$$ 
Recall that $m_d:=\max(2,(d-1)/2)$. Let $m>m_d$ and let $\theta$ and $v$ be complex-valued functions on $B_{r}$ satisfying the hypotheses of Proposition~\ref{estimat-fourrier-q}. Recall that $q := 1_{B_{r}}\, \theta/v$. The function $q_1:=(1-\eta)q$ is in $\cC_b^{\lfloor m\rfloor}(\R^d,\C)$ and is compactly supported in $\overline{K}_{r',r}$. Moreover its restriction to $K_{r',r}$ is in $\cC_b^m(K_{r',r},\C)$. It follows from Proposition~\ref{Cm-elementaire-infini} applied with ${\cal O}=K_{r',r}$ and $u=q_1$ that $\lim_{\|a\|\to +\infty}\|a\|^{(d-1)/2}\hat{q}_1(a)=0$ (use $m>(d-1)/2$). 
 
{\it From the previous remark and the fact that $\eta\theta$ satisfies the same hypotheses as $\theta$, it suffices to prove Proposition~\ref{estimat-fourrier-q} in the case  when $q$ is compactly supported in $\overline {\cal C}$. From now on, in addition to the assumptions of Proposition~\ref{estimat-fourrier-q}, we assume that $q$ is compactly supported in $\overline{\cal C}$.} 

To prove that $q$ is integrable and to estimate $\hat{q}$, we use Proposition~\ref{partition}. Observe that $$D_{-k}({\cal C}) = \cup_{j\geq k_0+1}D_{-k}(\Gamma_j) = \cup_{\ell\geq k_0+1-k}\Gamma_\ell.$$ 
Hence, if $k\leq k_0-1$, then $D_{-k}({\cal C}) \subset \cup_{\ell\geq 2} \Gamma_\ell$. Since $\widetilde{\Gamma_0}$ and $\cup_{\ell\geq 2} \Gamma_\ell$ are disjoint, it follows from Proposition~\ref{partition} that, for each $k\leq k_0-1$, the function $\rho\circ D_{-k}$ vanishes on $\cal C$ . Moreover we have $q(x)=0$ if $x\not\in {\cal C}\cup\{0\}$. Thus  
$$\mbox{$\forall x\in\R^d\setminus\{0\},\ \ \ q(x) = \sum_{k= k_0}^{+\infty}\rho(D_{-k}x)\, q(x)$}.$$
For each $k\geq k_0$, we set: 
$$\forall x\in\R^d,\quad \psi_{k}(x) := \left \{
    \begin{array}{ll}
      \rho(x)\, q(D_{k}x) \quad \text{if } \ x\in\widetilde{\Gamma_0}\\[0.12cm]
       \ \quad 0 \quad \quad \quad \quad \  \text{if } \ x\notin\widetilde{\Gamma_0}.
    \end{array}
\right. 
= \ \left \{
    \begin{array}{ll}
      \rho(x)\, \frac{\theta_k(x)}{v_k(x)} \quad \text{if } x\in\widetilde{\Gamma_0}\\[0.12cm]
       \ \quad 0 \quad \quad \quad \ \text{if } x\notin\widetilde{\Gamma_0}.
    \end{array}
\right. 
$$
The following proposition is the key statement to prove Proposition~\ref{estimat-fourrier-q}. 
\begin{apro} \label{deri-theta-sur-v} 
For each $k\geq k_0$, we have $\psi_{k}\in\cC_b^m(\R^d,\C)$, and there exists $K>0$ such that: $\forall k\geq k_0,\ \|\psi_{k}\|_{m,\widetilde{\Gamma_0}}\leq K\, 2^k$.
\end{apro}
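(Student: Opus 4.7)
The plan is to reduce the problem to a uniform $\cC_b^m$ estimate by rescaling. Define on (a neighborhood of) $\widetilde{\Gamma_0}$
$$\theta^{(k)}(y) := 2^k\,\theta(D_k y), \qquad v^{(k)}(y) := 4^k\,v(D_k y),$$
well-defined for $k\geq k_0$ by (\ref{bis-Gamma-k-inclusion}). The identity $4^k\,w(D_k y)=w(y)$, combined with hypothesis (\ref{(ii)}), yields $a|w(y)|\leq |v^{(k)}(y)|\leq b|w(y)|$, hence the uniform bounds $a/8\leq |v^{(k)}|\leq 2b$ on $\widetilde{\Gamma_0}$ since $|w|\in[1/8,2]$ there. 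A direct computation then gives
$$\psi_k = 2^k\,\rho\,\frac{\theta^{(k)}}{v^{(k)}},$$
so the estimate $\|\psi_k\|_{m,\widetilde{\Gamma_0}}\leq K\cdot 2^k$ reduces to showing that $\theta^{(k)}$ and $v^{(k)}$ have $\cC_b^m$-norms bounded uniformly in $k$.

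The key scaling formulas, using $\delta^\alpha=(1/4^k)^{\alpha_1}(1/2^k)^{|\alpha|-\alpha_1}=2^{-k(\alpha_1+|\alpha|)}$, are
$$\partial^\alpha \theta^{(k)}(y) = 2^{k(1-\alpha_1-|\alpha|)}\,(\partial^\alpha\theta)(D_k y), \qquad \partial^\alpha v^{(k)}(y) = 2^{k(2-\alpha_1-|\alpha|)}\,(\partial^\alpha v)(D_k y).$$
Whenever the prefactor exponent is $\leq 0$, uniform boundedness follows trivially from $\|\partial^\alpha\theta\|_{0,B_r},\|\partial^\alpha v\|_{0,B_r}<\infty$; this covers every case except three critical ones. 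For $\theta^{(k)}$ at $\alpha=0$, the exponent $k$ is absorbed because $\theta(0)=0$, so Taylor yields $|\theta(D_k y)|=O(\|D_k y\|)=O(1/2^k)$. For $v^{(k)}$ at $\alpha=0$, one uses $v(0)=0$ (a consequence of (\ref{(ii)}) at the origin) and the same first-order Taylor. Most importantly, the exponent $k$ appearing in $\partial_j v^{(k)}$ for $j\geq 2$ is precisely absorbed by hypothesis (\ref{(i)}), giving $(\partial_j v)(D_k y)=O(\|D_k y\|)=O(1/2^k)$. For the Hölder seminorm at order $|\alpha|=\lfloor m\rfloor$, the contraction $\|D_k(y_1-y_2)\|\leq 2^{-k}\|y_1-y_2\|$ contributes an extra factor $2^{-k\tau}$, so the total exponent becomes $2-\alpha_1-m\leq 0$ (since $m\geq m_d\geq 2$), and the Hölder constants stay bounded as well.

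Combining these uniform bounds with $|v^{(k)}|\geq a/8$, a standard Banach-algebra argument (or a direct Leibniz/Faà di Bruno expansion) shows that $1/v^{(k)}$, and hence $\theta^{(k)}/v^{(k)}$, has $\cC_b^m(\widetilde{\Gamma_0},\C)$-norm bounded uniformly in $k$. Since $\rho\in\cC_c^\infty(\R^d,\R^+)$ is compactly supported in the interior of $\widetilde{\Gamma_0}$ (its support avoids both $\partial\widetilde{\Gamma_0}$ and $0$), multiplying by $\rho$ and extending by $0$ outside $\widetilde{\Gamma_0}$ produces a function in $\cC_b^m(\R^d,\C)$ — here I invoke Proposition~\ref{Cm-elementaire-infini}(i) to identify the Hölder seminorms on $\R^d$ and on the interior of $\widetilde{\Gamma_0}$. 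The remaining $2^k$ prefactor then delivers $\|\psi_k\|_{m,\widetilde{\Gamma_0}}\leq K\cdot 2^k$.

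The main obstacle lies in the asymmetry of the parabolic dilation $D_k$: along $x_1$ it contracts by $4^{-k}$ while transversally it contracts only by $2^{-k}$, so differentiating $v(D_k y)$ in a transverse direction produces an outer factor $2^k$ that only the hypothesis $(\partial_j v)(0)=0$ for $j\geq 2$ can absorb. It is precisely the matching between the parabolic geometry built into $w$ and the vanishing of the transverse first partials of $v$ at the origin that forces the bound $2^k$ rather than the weaker $4^k$ that a naive scaling would give; the case $\alpha_1=0, |\alpha|=1$ in $\partial^\alpha v^{(k)}$ is the unique place where this input is genuinely used, and everything else is bookkeeping around it.
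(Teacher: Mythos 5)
Your proof is correct and follows essentially the same route as the paper. The normalization $\theta^{(k)}=2^k\theta_k$, $v^{(k)}=4^kv_k$ is a clean repackaging of the paper's Lemmas~\ref{deri-theta}, \ref{deri-v} and \ref{deri-1/v}: the key inputs ($\theta(0)=0$; $(\partial_j v)(0)=0$ for $j\geq2$; the parabolic scaling $4^k w(D_k y)=w(y)$ together with (\ref{(ii)}); Leibniz to control $1/v^{(k)}$) are identical, and collecting the power of $2^k$ once at the end, rather than tracking the factors $2^{-k}$ and $4^k$ through $\theta_k$ and $1/v_k$ separately, is a notational tidying rather than a different argument.

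One narrative slip is worth correcting. Your parenthetical remark that the critical case ``$v^{(k)}$ at $\alpha=0$'' is handled by $v(0)=0$ plus first-order Taylor does not close on its own: Taylor only gives $|v(D_k y)|=O(2^{-k})$, hence $|v^{(k)}(y)|=O(2^k)$, which is not uniformly bounded. The correct uniform bound $|v^{(k)}|\leq 2b$ on $\widetilde{\Gamma_0}$ comes from the $w$-scaling $4^k w(D_k y)=w(y)$ and hypothesis (\ref{(ii)}), exactly as you state earlier in the same paragraph; the subsequent Taylor remark for $v^{(k)}$ should simply be deleted (the analogous Taylor argument is valid for $\theta^{(k)}$, where the outer prefactor is $2^k$, but not for $v^{(k)}$, where it is $4^k$).
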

The proof of Proposition~\ref{deri-theta-sur-v} is postponed in Subsection~\ref{b5}. 

\noindent \begin{proof}{ of Proposition~\ref{estimat-fourrier-q}}
Let $k\geq k_0$. Setting  $t=D_{-k}x$ and using Proposition~\ref{deri-theta-sur-v}, we obtain 
\begin{equation} \label{q-int-estimation}
\int_{\R^d}\rho(D_{-k}x)\,|q(x)|dx = (\frac{1}{2^k})^{d+1}\int_{\R^d}\rho(t)\,|q(D_kt)|dt 
\leq \frac{KL_d(\widetilde{\Gamma_0})}{2^{kd}}. 
\end{equation}
Thus $\sum_{k\geq k_0}\int_{\R^d}\rho(D_{-k}x)\,|q(x)|dx < \infty$. So $q$ is integrable, and we have for all $a\in\R^d$: 
\begin{eqnarray} \label{hat-q}
\hat{q}(a) := \int_{\R^d} q(x)\, e^{-i\langle x,a \rangle}\, dx &=&\sum_{k=k_0}^{+\infty}\int_{\R^d}\rho(D_{-k}x)\,q(x)e^{-i\langle x,\,a\rangle}dx \nonumber \\
&=&\sum_{k=k_0}^{+\infty}(\frac{1}{2^k})^{d+1}\int_{\R^d}\rho(t)\,q(D_{k}t)e^{-i\langle t,\,D_ka\rangle}dt \nonumber\\
&=&\sum_{k=k_0}^{+\infty}(\frac{1}{2^k})^{d+1}\widehat{\psi_{k}}(D_ka). 
\end{eqnarray}
From Proposition~\ref{deri-theta-sur-v} and Proposition~\ref{Cm-elementaire-infini} applied with $u:=\psi_{k}$ and ${\cal O} := Int(\widetilde{\Gamma_0})$, 
one can deduce the following property. 
\begin{equation} \label{psi-k-rho}
\forall k \geq k_0,\ \forall b\in\R^d,\quad \|b\|^{m}|\widehat{\psi_{k}}(b)|\leq CK\, 2^k.
\end{equation}
Moreover Proposition~\ref{deri-theta-sur-v} gives with $K' := K L_d(\widetilde{\Gamma_0})$: $\forall k \geq k_0,\ \forall b\in\R^d,\ |\widehat{\psi_{k}}(b)|\leq K'\, 2^k$. Then, from (\ref{psi-k-rho}) and $m> \frac{d-1}{2}$, we obtain  
$$\mbox{$\forall k \geq k_0,\ \forall b\in\R^d,\quad \|b\|^{\frac{d-1}{2}}|\widehat{\psi_{k}}(b)|\leq \big(\|b\|^m +1\big)|\widehat{\psi_{k}}(b)|\leq (CK+K')\, 2^k$}.$$
Let $a\in\R^d$. By using the fact that $\|a\| \leq 4^k\|D_ka\|$, we obtain for all $k\geq k_0$  
\begin{equation} \label{hat-psi}
\|a\|^{\frac{d-1}{2}}(\frac{1}{2^k})^{d+1}|\widehat{\psi_{k}}(D_ka)| \leq  
(4^k)^{\frac{d-1}{2}}\, (\frac{1}{2^k})^{d+1}\, (CK+K')\, 2^k = \frac{CK+K'}{2^k}, 
\end{equation}
from which we deduce that 
$$\lim_{k_1\r+\infty} 
\mbox{$ \|a\|^{\frac{d-1}{2}}\,\big|\sum_{k=k_1}^{+\infty}(\frac{1}{2^k})^{d+1}\widehat{\psi_{k}}(D_ka)\, \big| = 0\ $ uniformly in $a\in\R^d$}.$$
By (\ref{psi-k-rho}), we have $\forall a\in\R^d\setminus\{0\},\ |\widehat{\psi_{k}}(D_ka)|\leq \frac{4^{km}C2^k}{\|a\|^{m}}$, and since $m>\frac{d-1}{2}$, we obtain 
$$\forall k_1 > k_0,\quad \lim_{\|a\|\to+\infty}
\mbox{$\|a\|^{\frac{d-1}{2}}\sum_{k=k_0}^{k_1-1}(\frac{1}{2^k})^{d+1}\widehat{\psi_{k}}(D_ka)=0$}.$$
From (\ref{hat-q}) and from the two previous properties, it follows that $\lim_{\|a\|\to +\infty}\|a\|^{\frac{d-1}{2}}\hat{q}(a)=0$, as claimed in Proposition~\ref{estimat-fourrier-q}. 
\end{proof}

\subsection{Proof of Proposition~\ref{estimat-fourrier-q-bis}} \label{b4}
Here the function $q$ is defined by $q := 1_{B_{r}}\, \theta/(v\,\tilde{v})$, with $\theta$, $v$ and $\tilde{v}$ satisfying the hypotheses of Proposition~\ref{estimat-fourrier-q-bis}. As above one may suppose that $q$ is compactly supported in $\overline {\cal C}$. The proof of Proposition~\ref{estimat-fourrier-q-bis} is then similar to the previous one, up to the following changes. First the function $\psi_{k}(\cdot)$ is replaced with:  
	$$\forall x\in\R^d,\quad \widetilde\psi_{k}(x):= \left \{
    \begin{array}{ll}
      \rho(x)\, \theta_k(x)/(v_k(x)\, \tilde{v}_k(x)) \quad \text{if } x\in\widetilde{\Gamma_0}\\[0.2cm]
       \ \quad 0 \quad \quad \text{if } x\notin\widetilde{\Gamma_0}. 
    \end{array}
\right. 
$$
Second, Proposition~\ref{deri-theta-sur-v} is replaced with the following one (proved in Subsection~\ref{b5}): 
\begin{apro} \label{deri-theta-sur-v-bis}
For each $k\geq k_0$, we have $\widetilde\psi_{k}\in\cC_b^m(\R^d,\C)$, and  there exists $L>0$ such that, for all $k\geq k_0$, we have $\|\widetilde\psi_{k}\|_{m,\widetilde{\Gamma_0}}\leq L\, (2^k)^{2-\nu}$ with $\nu := \min(m-2,1)$. 
\end{apro}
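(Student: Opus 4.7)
The strategy mirrors that of Proposition~\ref{deri-theta-sur-v}, but with a finer accounting that exploits both the higher-order vanishing of $\theta$ at $0$ (namely $\theta(0)=0$, $\nabla\theta(0)=0$ and $\mathrm{Hess}\,\theta(0)=0$) and the extra denominator factor $\tilde v$. Since~(\ref{(ii)}) forces $|v|$ and $|\tilde v|$ to behave like $|w|$, and $|w(D_kx)|=|w(x)|/4^k$ is of order $4^{-k}$ on $\widetilde{\Gamma_0}$, the natural renormalisation is
\[
\hat v_k(x):=4^k v(D_kx),\qquad \hat{\tilde v}_k(x):=4^k\tilde v(D_kx),\qquad \hat\theta_k(x):=16^k\theta(D_kx),
\]
so that $\widetilde\psi_k=\rho\cdot\hat\theta_k/(\hat v_k\hat{\tilde v}_k)$ on $\widetilde{\Gamma_0}$. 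Hypothesis~(\ref{(ii)}) then yields $a/8\le|\hat v_k(x)|\le 2b$ on $\widetilde{\Gamma_0}$, and likewise for $\hat{\tilde v}_k$.

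I would first show that $\hat v_k$ and $\hat{\tilde v}_k$ have $\cC_b^m(\widetilde{\Gamma_0},\C)$-norms bounded uniformly in $k\ge k_0$. The chain rule gives
\[
\partial^\alpha\hat v_k(x)=2^{k(2-|\alpha|-\alpha_1)}\,(\partial^\alpha v)(D_kx),
\]
where $\alpha_1$ denotes the first component of $\alpha$. When $2-|\alpha|-\alpha_1\le 0$, mere boundedness of $\partial^\alpha v$ on $B_r$ suffices; the only remaining case is $\alpha=e_j$ with $j\ge 2$, where $(\partial_j v)(0)=0$ (hypothesis~(\ref{(i)})) combined with the mean-value inequality gives $|(\partial_j v)(y)|\le C\|y\|\le C/2^k$ on $D_k(\widetilde{\Gamma_0})$, absorbing the extra factor $2^k$. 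The Hölder seminorm at $|\alpha|=\lfloor m\rfloor$ uses $[\partial^\alpha v]_{m-\lfloor m\rfloor,B_r}$ and the contraction $\|D_kx-D_ky\|\le\|x-y\|/2^k$. Combined with the uniform lower bound $|\hat v_k\hat{\tilde v}_k|\ge a^2/64$, a Faà~di~Bruno argument then yields a uniform $\cC_b^m(\widetilde{\Gamma_0})$-bound for $1/(\hat v_k\hat{\tilde v}_k)$.

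The heart of the proof is the estimate $\|\hat\theta_k\|_{m,\widetilde{\Gamma_0}}\le C(2^k)^{2-\nu}$. By the chain rule,
\[
\partial^\alpha\hat\theta_k(x)=2^{k(4-|\alpha|-\alpha_1)}(\partial^\alpha\theta)(D_kx).
\]
For $|\alpha|\le 2$, the vanishing of $\theta$, $\nabla\theta$ and $\mathrm{Hess}\,\theta$ at $0$ together with either the $(m-2)$-Hölder regularity of the second derivatives (when $2<m\le 3$) or the boundedness of the third derivatives (when $m>3$) gives, via iterated Taylor integration, $|(\partial^\alpha\theta)(y)|\le C\|y\|^{2+\nu-|\alpha|}$ in a neighbourhood of $0$. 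Since $\|D_kx\|\le C/2^k$ on $\widetilde{\Gamma_0}$, this yields $|\partial^\alpha\hat\theta_k(x)|\le C(2^k)^{2-\nu-\alpha_1}\le C(2^k)^{2-\nu}$. For $|\alpha|\in[3,\lfloor m\rfloor]$, one has $\nu=1$ and $4-|\alpha|-\alpha_1\le 1=2-\nu$, so mere boundedness of $\partial^\alpha\theta$ already suffices. The Hölder seminorm at $|\alpha|=\lfloor m\rfloor$ combines the Hölder constant of $\partial^\alpha\theta$ with $\|D_kx-D_ky\|^\tau\le\|x-y\|^\tau/(2^k)^\tau$ ($\tau=m-\lfloor m\rfloor$), producing the exponent $4-m-\alpha_1\le 2-\nu$.

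The product rule applied to $\widetilde\psi_k=\rho\cdot\hat\theta_k\cdot[1/(\hat v_k\hat{\tilde v}_k)]$, together with the fact that $\rho\in\cC_b^\infty(\R^d,\R)$ is compactly supported in $\widetilde{\Gamma_0}$, gives $\widetilde\psi_k\in\cC_b^m(\R^d,\C)$ and $\|\widetilde\psi_k\|_{m,\widetilde{\Gamma_0}}\le L(2^k)^{2-\nu}$. The main obstacle is the case-by-case bookkeeping in the previous paragraph: one must verify, for every $\alpha$ with $|\alpha|\le\lfloor m\rfloor$ (and for the corresponding Hölder seminorm when $m\notin\N$), that the potentially very large chain-rule factor $2^{k(4-|\alpha|-\alpha_1)}$, as big as $16^k$ at $\alpha=0$, is exactly absorbed either by the vanishing of $\partial^\alpha\theta$ at $0$, or, once that vanishing is exhausted at $|\alpha|=3$, by the exponent itself being at most $1=2-\nu$. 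The asymmetric dyadic scaling $D_k$ is calibrated precisely to the anisotropy of $w$, which is what makes every case collapse onto the uniform bound $(2^k)^{2-\nu}$.
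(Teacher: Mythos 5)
Your proof is correct and is essentially the paper's argument in renormalized clothing: the paper works directly with $\theta_k$, $v_k$, $\tilde v_k$ and proves $\|\theta_k\|_{m,\widetilde{\Gamma_0}}=O(2^{-k(2+\nu)})$ (Lemma~\ref{deri-sec-theta}) and $\|1/v_k\|_{m,\widetilde{\Gamma_0}}=O(4^k)$ (Lemma~\ref{deri-1/v}, via Sublemma~\ref{sub-lem-1/v} and an explicit Leibniz induction), then multiplies; you instead rescale to $\hat\theta_k=16^k\theta_k$ and $\hat v_k=4^kv_k$, $\hat{\tilde v}_k=4^k\tilde v_k$ so that the denominator becomes uniformly bounded above and below in $\cC_b^m$ and the whole $k$-dependence sits in $\|\hat\theta_k\|_{m,\widetilde{\Gamma_0}}=O((2^k)^{2-\nu})$. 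The chain-rule exponent bookkeeping, the Taylor/Hölder bound $|\partial^\alpha\theta(y)|\le C\|y\|^{2+\nu-|\alpha|}$ for $|\alpha|\le 2$, and the treatment of the H\"older seminorm at order $\lfloor m\rfloor$ are all exactly those of the paper. Your renormalization buys cleaner bookkeeping: the inversion of $\hat v_k\hat{\tilde v}_k$ becomes a one-shot Fa\`a di Bruno statement with constants independent of $k$, rather than an induction tracking factors of $4^k$ (and the auxiliary Lipschitz estimate of Sublemma~\ref{sub-lem-1/v}) as in the paper.
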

Note that $\nu>0$ since $m>m_d\geq2$ by hypothesis. Next the term $O(2^{-kd})$ in (\ref{q-int-estimation}) is replaced with $O(2^{-k(d-1+\nu)})$: this yields the integrability of $q$ and a formula analogous to (\ref{hat-q}). The term $O(2^k)$ of (\ref{psi-k-rho}) is replaced with $O(2^{k(2-\nu)})$: this gives the property analogous to (\ref{hat-psi}) with $O(2^{-k\nu})$ (in place of $O(2^{-k})$). We can then conclude as above. 
\fdem

\subsection{Proof of Propositions~\ref{deri-theta-sur-v} and \ref{deri-theta-sur-v-bis}} \label{b5}
For any $\alpha=(\alpha_1,\ldots,\alpha_d)\in\N^d$, we set $\alpha! := \alpha_1!\ldots\alpha_d!$. If $\gamma=(\gamma_1,\ldots,\gamma_d)\in\N^d$, the notation $\alpha\leq \gamma$ means: $\forall i=1,\ldots,d,\ \alpha_i\leq\gamma_i$.
We shall use Leibniz's formula, namely:  if $\Omega$ is an open subset of $\R^d$ and if  $\gamma\in\N^d$ is such that $\vert\gamma\vert := \gamma_1+\ldots+\gamma_d \leq k\, $ ($k\in\N^*$), then we have for all $f,g\in  \cC_b^k(\Omega,\,\C)$ 
\begin{equation} \label{def-leibniz}
\mbox{$\partial^{\gamma}(f\cdot g)=\sum_{\beta\leq\gamma} \binom{\gamma}{\beta}\ \partial^{\beta}f \ \partial^{\gamma-\beta}g$},
\end{equation}
where $\binom{\gamma}{\beta} =\frac{\gamma!}{\beta!(\gamma-\beta)!}$  We shall also use repeatedly the following lemma. 
\begin{alem} \label{preli} 
Let $\sigma\in]0,1]$ and let ${\cal O}$ be an open subset of $\R^d$. For any $f,g\in \cC_b^{\sigma}(\cal O,\C)$, we have  $fg\in \cC_b^{\sigma}(\cal O,\C)$ and $\big[fg\big]_{\sigma,\cal O}\leq \big\|f\big\|_{0,\cal O}\, \big[g\big]_{\sigma,\cal O} + \big\|g\big\|_{0,\cal O}\, \big[f\big]_{\sigma,\cal O}$. 
\end{alem}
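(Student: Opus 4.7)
The plan is to prove this by a direct computation using the standard add-and-subtract trick that is classical for products of Hölder functions; no auxiliary machinery is needed.

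First I would verify membership of $fg$ in $\cC_b^{\sigma}(\cO,\C)$. Since $f$ and $g$ are in $\cC_b^{\sigma}(\cO,\C)\subset \cC_b^{0}(\cO,\C)$, their product is continuous and bounded on $\cO$ with $\|fg\|_{0,\cO}\leq \|f\|_{0,\cO}\,\|g\|_{0,\cO}$, so the only nontrivial point is the $\sigma$-Hölder regularity, which is exactly what the estimated seminorm bound establishes.

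Next I would fix $(x,y)\in \cO^2$ with $x\neq y$ and write
\[
f(x)g(x)-f(y)g(y) = f(x)\bigl(g(x)-g(y)\bigr) + g(y)\bigl(f(x)-f(y)\bigr),
\]
from which the triangle inequality yields
\[
|f(x)g(x)-f(y)g(y)| \leq \|f\|_{0,\cO}\,|g(x)-g(y)| + \|g\|_{0,\cO}\,|f(x)-f(y)|.
\]
Dividing by $\|x-y\|^{\sigma}$ and taking the supremum over $(x,y)\in \cO^2$, $x\neq y$, gives directly
\[
[fg]_{\sigma,\cO}\leq \|f\|_{0,\cO}\,[g]_{\sigma,\cO} + \|g\|_{0,\cO}\,[f]_{\sigma,\cO},
\]
which in particular shows $[fg]_{\sigma,\cO}<\infty$ and confirms $fg\in\cC_b^{\sigma}(\cO,\C)$.

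There is no real obstacle here; the only point to be careful about is to split the difference symmetrically by pairing $f$ with the increment of $g$ and $g$ with the increment of $f$, rather than using the triangle inequality on $|f(x)||g(x)|+|f(y)||g(y)|$, so that the bound comes out with the sup-norm of each factor multiplying the Hölder seminorm of the other. This gives exactly the stated estimate.
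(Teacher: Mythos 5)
Your proof is correct and is exactly the standard add-and-subtract argument; the paper states this lemma without proof (treating it as classical), and the computation you give is precisely the one it implicitly relies on. Nothing to add.
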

Recall that $\widetilde {\Gamma_0}=\big\{x\in\R^d:1/8\leq |w(x)|\leq 2\big\}$ and that, for any function $u : B_{r} \r \C$, we have set: $\forall k\in \N,\, \forall x\in B_{r},\ u_k(x)=u(D_kx)$. Observe that all the partial derivatives of the function $\rho(\cdot)$ are bounded on $\widetilde{\Gamma_0}$. From this fact and from (\ref{def-leibniz}), 
Propositions~\ref{deri-theta-sur-v} and \ref{deri-theta-sur-v-bis} easily follow from the next Lemmas~\ref{deri-theta}, \ref{deri-sec-theta} and \ref{deri-1/v}. 

Below we denote by $\theta, v, \tilde{v}$ three functions from $B_{r}$ into $\C$, we consider a real number $m>2$, and we set $\tau := m - \lfloor m\rfloor$. 
\begin{alem} \label{deri-theta} 
If $\theta\in \cC_b^m(B_{r},\C)$ satisfies $\theta(0)=0$, then there exist a constant $C>0$ such that, for all $k\geq k_0$, we have
\begin{subequations}
\begin{eqnarray}
|\beta|\leq \lfloor m\rfloor\quad &\Rightarrow& \quad  \big\|\partial^{\beta}\theta_k\big\|_{0,\widetilde{\Gamma_0}}\leq C\, 2^{-k}  \label{deri-theta1} \\
|\beta|=\lfloor m\rfloor\quad &\Rightarrow& \quad \big[\partial^{\beta}\theta_k\big]_{\tau,\widetilde{\Gamma_0}}\leq C\, 2^{-k} \label{deri-theta3}
\end{eqnarray}
\end{subequations}
\end{alem}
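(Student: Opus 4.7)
{ of Lemma~\ref{deri-theta} (plan)}
The plan is a direct chain-rule computation combined with a first-order Taylor bound to exploit the hypothesis $\theta(0)=0$. Fix $\beta=(\beta_1,\ldots,\beta_d)\in\N^d$. Since $D_k$ is the diagonal linear map $(x_1,x_2,\ldots,x_d)\mapsto(x_1/4^k,x_2/2^k,\ldots,x_d/2^k)$, the chain rule gives
$$\partial^\beta\theta_k(x) \; = \; \frac{1}{2^{k(2\beta_1+\beta_2+\cdots+\beta_d)}}\,(\partial^\beta\theta)(D_k x) \; = \; \frac{(\partial^\beta\theta)(D_k x)}{2^{k(\beta_1+|\beta|)}}.$$
By (\ref{bis-Gamma-k-inclusion}), for $k\geq k_0$ and $x\in\widetilde{\Gamma_0}$ we have $D_k x\in B_r$, so $(\partial^\beta\theta)(D_k x)$ is controlled by $\|\partial^\beta\theta\|_{0,B_r}<\infty$.

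First I treat the sup-norm bound (\ref{deri-theta1}). For $|\beta|\geq 1$ the exponent $\beta_1+|\beta|$ is $\geq 1$, so the formula above yields $\|\partial^\beta\theta_k\|_{0,\widetilde{\Gamma_0}}\leq\|\partial^\beta\theta\|_{0,B_r}\,2^{-k}$ at once. For the case $\beta=0$, one uses the hypothesis $\theta(0)=0$: since $\theta\in\cC_b^m(B_r,\C)$ with $m>2$, we have $\theta\in\cC_b^1$ and hence by the mean value inequality $|\theta(y)|\leq\|\nabla\theta\|_{0,B_r}\,\|y\|$ for all $y\in B_r$. The coordinates of $x\in\widetilde{\Gamma_0}$ are uniformly bounded (from $|w(x)|\leq 2$ one gets $|x_1|\leq 2$ and $\|x'\|^2\leq 4$), so $\|D_k x\|\leq C_0/2^k$ for some constant $C_0$, and therefore $|\theta_k(x)|=|\theta(D_k x)|\leq\|\nabla\theta\|_{0,B_r}\,C_0\,2^{-k}$. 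This completes (\ref{deri-theta1}).

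Next I handle the H\"older estimate (\ref{deri-theta3}). Fix $|\beta|=\lfloor m\rfloor$ (so in particular $|\beta|\geq 2$ since $m>2$). For $x,y\in\widetilde{\Gamma_0}$, the chain-rule formula gives
$$|\partial^\beta\theta_k(x)-\partial^\beta\theta_k(y)| \; = \; \frac{1}{2^{k(\beta_1+|\beta|)}}\,\bigl|(\partial^\beta\theta)(D_k x)-(\partial^\beta\theta)(D_k y)\bigr|.$$
Since $D_k x,D_k y\in B_r$ and $\partial^\beta\theta$ is $\tau$-H\"older on $B_r$ with constant $[\partial^\beta\theta]_{\tau,B_r}$, and since the diagonal form of $D_k$ yields $\|D_k x-D_k y\|\leq 2^{-k}\|x-y\|$, I obtain
$$[\partial^\beta\theta_k]_{\tau,\widetilde{\Gamma_0}} \; \leq \; \frac{[\partial^\beta\theta]_{\tau,B_r}}{2^{k(\beta_1+|\beta|+\tau)}}.$$
Using $|\beta|\geq 2$ (hence $\beta_1+|\beta|+\tau\geq 2>1$), this is bounded by a constant times $2^{-k}$, which gives (\ref{deri-theta3}).

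Overall the proof is routine once the scaling formula $\partial^\beta\theta_k=2^{-k(\beta_1+|\beta|)}(\partial^\beta\theta)\circ D_k$ is written down. The only mildly non-automatic step is the treatment of $\beta=0$, where the hypothesis $\theta(0)=0$ is crucial since there is no intrinsic $2^{-k}$ gain without it; everywhere else the anisotropic dilation $D_k$ contracts by at least a factor $2^{-k}$ per derivative. Taking $C$ to be the maximum of $\|\nabla\theta\|_{0,B_r}\,C_0$, $\max_{|\beta|\leq\lfloor m\rfloor}\|\partial^\beta\theta\|_{0,B_r}$ and $\max_{|\beta|=\lfloor m\rfloor}[\partial^\beta\theta]_{\tau,B_r}$ yields the uniform constant in the statement.
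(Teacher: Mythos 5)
Your proof is correct and takes essentially the same approach as the paper: chain-rule scaling of the anisotropic dilation $D_k$, plus the mean-value inequality (via $\theta(0)=0$) to handle $\beta=0$. You track the exact exponent $\beta_1+|\beta|$ and the contraction $\|D_kx-D_ky\|\leq 2^{-k}\|x-y\|$, whereas the paper settles for the cruder bounds $2^{-k|\beta|}$ and $\|D_kx-D_ky\|\leq\|x-y\|$; either way the claimed $2^{-k}$ is obtained.
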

\begin{proof}{} 
Since the first partial derivatives of $\theta$ are bounded on $B_{r}$, there exists $M>0$ such that: $\forall x\in B_{r}$, $|\theta(x)|\leq M\|x\|$. From (\ref{bis-Gamma-k-inclusion}), it follows that $\|\theta_k\|_{0,\widetilde{\Gamma_0}}\leq M\sqrt{2}/2^{k-1}$. 
Let  $\beta\in\N^d$ such that $1\leq|\beta|\leq \lfloor m\rfloor$, and let $x\in B_{r}$. We have $|\partial^{\beta}\theta_k(x)|\leq \frac{1}{2^{k|\beta|}}|\partial^{\beta}\theta(D_kx)|$. So (\ref{deri-theta1}) follows from the fact that the partial derivatives of $\theta$ of order $j=1,\ldots,\lfloor m\rfloor$ are bounded on $B_{r}$. Now assume that $|\beta|=\lfloor m\rfloor\geq2$. Then we have for all $(x,y)\in \widetilde{\Gamma_0}\times\widetilde{\Gamma_0}$:  
$$\mbox{$\big|(\partial^{\beta}\theta_k)(x)-(\partial^{\beta}\theta_k)(y)\big|\leq 4^{-k}\, \big|(\partial^{\beta}\theta)(D_kx)-(\partial^{\beta}\theta)(D_ky)\big| 
\leq 4^{-k}\, \big[\partial^{\beta}\theta\big]_{\tau,B_{r}}\, \|x-y\|^{\tau}$},$$
hence we have (\ref{deri-theta3}). 
\end{proof}
%
\begin{alem} \label{deri-sec-theta} 
If $\theta\in \cC_b^m(B_{r},\C)$ satisfies $\theta(0) = 0$ and if all the first and second partial derivatives of $\theta$ vanish at 0, then there exist a constant $D>0$ such that, for all $k\geq k_0$, we have with $\nu := \min(m-2,1)$: 
\begin{subequations}
\begin{eqnarray}
|\beta|\leq \lfloor m\rfloor\quad &\Rightarrow& \quad \big\|\partial^{\beta}\theta_k\big\|_{0,\widetilde{\Gamma_0}} 
\leq D\, 2^{-k(2+\nu)} \label{deri-sec-theta1} \\
|\beta|=\lfloor m\rfloor\quad &\Rightarrow& \quad \big[\partial^{\beta}\theta_k\big]_{\tau,\widetilde{\Gamma_0}} \leq D\, 2^{-k(2+\nu)}. \label{deri-sec-theta3}
\end{eqnarray}
\end{subequations}
\end{alem}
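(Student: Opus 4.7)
The plan is to reduce both the sup-norm and the H\"older semi-norm bounds on $\theta_k=\theta\circ D_k$ to pointwise bounds on $\partial^{\beta}\theta$ near the origin, and then track the factors of $2^{-k}$ gained from the anisotropic contraction $D_k$. Throughout, I set $|\beta|_*:=2\beta_1+\beta_2+\cdots+\beta_d$, so that the chain rule applied to $D_kx=(x_1/4^k,x_2/2^k,\ldots,x_d/2^k)$ gives
$$\partial^{\beta}\theta_k(x)=\frac{1}{2^{k|\beta|_*}}\,(\partial^{\beta}\theta)(D_kx),\qquad |\beta|_*\geq|\beta|.$$

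First I would establish the Taylor-type pointwise bounds
$$|\partial^{\beta}\theta(y)|\leq C\,\|y\|^{\max(2+\nu-|\beta|,\,0)}\qquad(y\in B_r,\ |\beta|\leq\lfloor m\rfloor),$$
using that $\theta(0)=0$ together with the vanishing of all first and second derivatives of $\theta$ at $0$. The seed case is $|\beta|=2$: if $m\geq 3$ then $\nu=1$ and the mean value inequality applied to $\partial^{\beta}\theta$ (which has bounded gradient) gives $|\partial^{\beta}\theta(y)|\leq C\|y\|$; if $m\in(2,3)$ then $\tau=m-2=\nu$ and the vanishing of $\partial^{\beta}\theta$ at $0$ combined with its $\tau$-H\"older regularity yields the same exponent $\|y\|^{\nu}$. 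The cases $|\beta|\in\{0,1\}$ are then recovered by integrating $\partial^{\beta}\theta$ along the segment $[0,y]$, using $\partial^{\beta}\theta(0)=0$ and, for $|\beta|=1$, the additional vanishing of $\nabla\partial^{\beta}\theta$ at $0$. For $|\beta|\geq 3$, the bound reduces to the boundedness of $\partial^{\beta}\theta$ on $B_r$.

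Next I would exploit the fact that on $\widetilde{\Gamma_0}$ the inequality $|w(x)|\leq 2$ forces $|x_1|\leq 2$ and $\|x'\|^2\leq 2$, whence $\|D_kx\|^2\leq 4\cdot 16^{-k}+2\cdot 4^{-k}=O(4^{-k})$, so $\|D_kx\|\leq C\,2^{-k}$ uniformly for $k\geq k_0$. Combining Step 1 with the chain rule produces
$$|\partial^{\beta}\theta_k(x)|\leq C\,2^{-k\,e(\beta)}\quad\text{with}\quad e(\beta):=|\beta|_*+\max(2+\nu-|\beta|,0).$$
When $|\beta|\leq 2$ the exponent equals $|\beta|_*+2+\nu-|\beta|\geq 2+\nu$ (using $|\beta|_*\geq|\beta|$), and when $|\beta|\geq 3$ it equals $|\beta|_*\geq|\beta|\geq 3\geq 2+\nu$; this yields (\ref{deri-sec-theta1}) in every case.

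For the H\"older semi-norm (\ref{deri-sec-theta3}), with $|\beta|=\lfloor m\rfloor$, the elementary bound $\|D_kx-D_ky\|\leq 2^{-k}\|x-y\|$ together with the $\tau$-H\"older control of $\partial^{\beta}\theta$ on $B_r$ yields $|\partial^{\beta}\theta_k(x)-\partial^{\beta}\theta_k(y)|\leq C\,2^{-k(|\beta|_*+\tau)}\|x-y\|^{\tau}$. The target rate $2^{-k(2+\nu)}$ then follows in both regimes: for $m\geq 3$ one has $|\beta|\geq 3$, hence $|\beta|_*\geq 3=2+\nu$; for $m\in(2,3)$ one has $|\beta|=2$ and $\tau=\nu$, so $|\beta|_*+\tau\geq 2+\nu$. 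The main delicate point is the case split at $m=3$, where the origin of the $\|y\|^{\nu}$ bound on the second derivatives changes from Lipschitz continuity (for $m\geq 3$) to H\"older continuity (for $m\in(2,3)$); once this is handled, the first-coordinate contraction by $4^{-k}$ encoded in $|\beta|_*\geq|\beta|$ supplies the remaining rate automatically.
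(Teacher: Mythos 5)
Your argument is correct and follows essentially the same route as the paper: use the vanishing of $\theta$ and its first and second derivatives at $0$ to obtain the pointwise Taylor-type bounds $|\partial^{\beta}\theta(y)|\leq C\|y\|^{\max(2+\nu-|\beta|,0)}$ on $B_r$, then transport them through the anisotropic scaling $D_k$. You merely fill in the Taylor estimates that the paper asserts without proof, and you track the exact exponent $|\beta|_*=\beta_1+|\beta|$ rather than the coarser lower bound $|\beta|$ used in the paper, which is a harmless refinement.
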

\begin{proof}{}
By hypothesis we have  $\theta(0)=0$, $(\partial_j\theta)(0)=0$, and $(\partial^2_{j\ell}\theta)(0)=0$ for every  $(j,\ell)\in\{1,\ldots,d\}^2$. 
If $|\beta|\leq2$, then (\ref{deri-sec-theta1}) holds since there exists a constant $K_{\beta}>0$ such that~: $\forall x\in B_{r},\ |\partial^{\beta}\theta(x)| \leq K_{\beta}\, \|x\|^{2+\nu-|\beta|}$. For  $2<|\beta|\leq \lfloor m\rfloor$, (\ref{deri-sec-theta1}) is obvious. 
Now assume that $|\beta|=\lfloor m\rfloor$. Let $(x,y)\in \widetilde{\Gamma_0}^2$. We have 
\begin{eqnarray*}
|(\partial^{\beta}\theta_k)(x)-(\partial^{\beta}\theta_k)(y)|&\leq& 2^{-k\lfloor m\rfloor}\, |(\partial^{\beta}\theta)(D_kx)-(\partial^{\beta}\theta)(D_ky)|\\
&\leq& 2^{-k\lfloor m\rfloor}\,\big[\partial^{\beta}\theta\big]_{\tau,B_{r}}(\frac{1}{2^k})^{\tau}\|x-y\|^{\tau}. 
\end{eqnarray*}
Since $\lfloor m\rfloor + \tau = m$, this gives (\ref{deri-sec-theta3}). 
\end{proof}
%
\begin{asublem} \label{deri-v} 
If $v\in \cC^m(B_{r},\C)$ satisfies (\ref{(i)}) and (\ref{(ii)}), then there exist $c,c',c''>0$ such that, for all $k\geq k_0$, we have: 
\begin{subequations}
\begin{eqnarray}  
|\beta|\leq \lfloor m\rfloor \quad &\Rightarrow& \quad \big\|\partial^{\beta}v_k\big\|_{0,\widetilde{\Gamma_0}}\leq 
c\, 4^{-k} \label{deri-v1} \\
|\beta|\leq \lfloor m\rfloor-1\quad &\Rightarrow& \quad\big[\partial^{\beta}v_k\big]_{1,\widetilde{\Gamma_0}}\leq c'\, 4^{-k}   \label{deri-v2}  \\
|\beta|=\lfloor m\rfloor\quad &\Rightarrow& \quad \big[\partial^{\beta}v_k\big]_{\tau,\widetilde{\Gamma_0}}\leq 
c''\, 4^{-k} \label{deri-v3} 
\end{eqnarray}
\end{subequations}
\end{asublem}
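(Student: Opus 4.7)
The backbone is the chain rule applied to $v_k = v\circ D_k$. With $(D_k)_1(x) = x_1/4^k$ and $(D_k)_j(x) = x_j/2^k$ for $j\geq 2$, writing $\beta = (\beta_1,\beta_2,\ldots,\beta_d)$ and $|\beta'| := \beta_2 + \cdots + \beta_d$, one computes
\begin{equation*}
\partial^{\beta}v_k(x) \,=\, 2^{-k(2\beta_1 + |\beta'|)}\,(\partial^{\beta} v)(D_k x).
\end{equation*}
The prefactor $2^{-k(2\beta_1 + |\beta'|)}$ already delivers $4^{-k}$ as soon as $\beta_1\geq 1$ or $|\beta'|\geq 2$, so hypotheses (\ref{(i)}) and (\ref{(ii)}) are only needed for the remaining small-$\beta$ cases.

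I would first prove (\ref{deri-v1}) by a case split on $\beta$. For $\beta = 0$, I use (\ref{(ii)}) together with the identity $w(D_k x) = 4^{-k}w(x)$ to obtain $|v_k(x)| \leq b\,4^{-k}|w(x)|$, and conclude via $|w(x)|\leq 2$ on $\widetilde{\Gamma_0}$. For $\beta = e_1$ the prefactor is already $4^{-k}$ and $\|\partial_1 v\|_{0,B_r}<\infty$ closes the estimate. For $\beta = e_j$ with $j\geq 2$ the prefactor is only $2^{-k}$, so an extra $2^{-k}$ must be extracted from the value $(\partial_j v)(D_k x)$: this is the unique place where (\ref{(i)}) enters. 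Combining $(\partial_j v)(0)=0$ with the Lipschitz character of $\partial_j v$ on $B_r$ (which holds because $m>2$), the Taylor-type bound $|(\partial_j v)(D_k x)| \leq L\,\|D_k x\| \leq L\,\|x\|/2^k$, together with boundedness of $\|x\|$ on $\widetilde{\Gamma_0}$, supplies the missing factor. For $|\beta|\geq 2$ the prefactor is already $\leq 4^{-k}$ (trivially if $\beta_1\geq 1$, and otherwise $2^{-k|\beta'|}\leq 4^{-k}$), and $\|\partial^{\beta}v\|_{0,B_r}<\infty$ does the rest.

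Estimate (\ref{deri-v2}) would then follow from a strengthened form of (\ref{deri-v1}) by a mean-value argument. The same case split in fact proves $\|\partial^{\gamma}v_k\|_{0,B(0,\sqrt{6})} = O(4^{-k})$ for all $|\gamma|\leq \lfloor m\rfloor$, because $\widetilde{\Gamma_0}\subset B(0,\sqrt{6})$ by (\ref{inclusion Gammak}), and for $k\geq k_0$ one has $B(0,\sqrt{6})\subset D_k^{-1}(B_r)$ (the choice $2^{k_0-1}>\sqrt{2}/r$ gives $r\cdot 2^{k_0}>2\sqrt{2}>\sqrt{6}$). Since $B(0,\sqrt{6})$ is convex, applying this to $\gamma = \beta + e_j$ with $|\beta|\leq \lfloor m\rfloor-1$ and invoking the mean-value inequality gives $[\partial^{\beta}v_k]_{1,\widetilde{\Gamma_0}} = O(4^{-k})$. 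Estimate (\ref{deri-v3}) is the most direct: the chain-rule identity together with $\|D_k x - D_k y\|\leq \|x-y\|/2^k$ and $[\partial^{\beta}v]_{\tau,B_r}<\infty$ gives
\begin{equation*}
|\partial^{\beta}v_k(x) - \partial^{\beta}v_k(y)| \,\leq\, 2^{-k(2\beta_1 + |\beta'| + \tau)}\,[\partial^{\beta}v]_{\tau,B_r}\,\|x-y\|^{\tau},
\end{equation*}
and for $|\beta| = \lfloor m\rfloor$ the exponent $2\beta_1 + |\beta'| + \tau$ is $\geq 2$ in every case: immediate if $\beta_1\geq 1$, and otherwise $|\beta'| + \tau = \lfloor m\rfloor + \tau = m > m_d\geq 2$.

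The only genuinely delicate step is the $\beta = e_j$ ($j\geq 2$) subcase of (\ref{deri-v1}); it is the unique place where (\ref{(i)}) is used, and it is precisely the bookkeeping translation of the anisotropic scaling built into $D_k$ into a pointwise estimate of the right order. Once that subcase is handled, the remainder of the sublemma is essentially chain-rule bookkeeping.
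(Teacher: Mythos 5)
Your proof is correct and follows the same case-by-case structure as the paper's: the anisotropic chain-rule identity $\partial^{\beta}v_k(x)=2^{-k(2\beta_1+|\beta'|)}(\partial^{\beta}v)(D_kx)$, hypothesis (\ref{(ii)}) for $\beta=0$, hypothesis (\ref{(i)}) plus the boundedness of second derivatives for $\beta=e_j$ with $j\geq 2$, and the automatic prefactor for $\beta=e_1$ and $|\beta|\geq 2$. The only substantive deviation is in (\ref{deri-v2}) for $\beta=0$: the paper introduces the auxiliary function $V(x)=v(x)-(\partial_1 v)(0)\,x_1$ (so that $\nabla V(0)=0$), applies the mean-value inequality to $V$ on the shrinking ball $B(0,\sqrt{2}/2^{k-1})$ and then re-adds the linear term, whereas you upgrade the pointwise derivative bounds of (\ref{deri-v1}) to hold on the fixed convex ball $B(0,\sqrt{6})\supset\widetilde{\Gamma_0}$ and apply the mean-value inequality to $v_k$ itself. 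Your organisation is slightly more economical because it exploits that the $\partial_1$-component of $\nabla v_k$ already carries the factor $4^{-k}$ from the anisotropic scaling, so (\ref{(i)}) is only needed for $j\geq 2$ and no subtraction is required; the underlying estimates coincide and both arguments are valid.
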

\begin{proof}{}
From (\ref{(ii)}), we obtain $|v_k(x)| \leq b\, |w(D_kx)| \leq b\, 4^{-k} |w(x)|$, hence  $\big\|v_k\big\|_{0,\widetilde{\Gamma_0}}\leq 2b\, 4^{-k}$. Moreover we have $\forall x\in B_{r}$, $(\partial_1 v_k)(x)= 4^{-k}(\partial_1v)(D_kx)$, thus $\|\partial_1v_k\|_{0,\widetilde{\Gamma_0}}\leq 4^{-k}\big\|\partial_1v\big\|_{0,B_{r}}$. Next we have: $\forall j\in\{2,\ldots,d\},\ \forall x\in B_{r},\ (\partial_j v_k)(x)=2^{-k}(\partial_jv)(D_kx)$. From (\ref{(i)}) and the mean value inequality (notice that the second order partial derivatives of $v$ are bounded on  $B_{r}$), there exists $M>0$ 
such that: $\forall x\in B_{r},\, | (\partial_j v)(x)|\leq M \|x\|$.  
From (\ref{bis-Gamma-k-inclusion}), it follows that $\|\partial_jv_k\|_{0,\widetilde{\Gamma_0}}\leq M\, 2\sqrt{2}\, 4^{-k}$. This proves (\ref{deri-v1}) for $|\beta|=1$. Now, if $2\leq|\beta|\leq \lfloor m\rfloor$, then  $$\mbox{$\forall x\in B_{r},\quad (\partial^{\beta}v_k)(x)=(\frac{1}{2^k})^{\beta_1}(\frac{1}{4^k})^{\beta_2+\ldots+\beta_d}(\partial^{\beta}v)(D_kx)$},$$  
and therefore we have $\big\|\partial^{\beta}v_k\big\|_{0,\widetilde{\Gamma_0}}\leq \frac{1}{4^k}\big\|\partial^{\beta}v\big\|_{0,B_{r}}$. The proof of (\ref{deri-v1}) is then complete. 

\noindent Let us first prove (\ref{deri-v2}) in case $\beta=0$. Set $V(x)=v(x)-(\partial_1v)(0)\, x_1$ for $x\in B_{r}$. We have $(\partial_1V)(0)=0$, and $(\partial_2V)(0)= \cdots=(\partial_dV)(0)=0$ thanks to (\ref{(i)}). So there exists $M>0$ such that: $\forall j\in\{1,\ldots,d\},\ \forall x\in B_{r},\ |(\partial_j V)(x)|\leq M\, \|x\|$. This fact and the mean value inequality applied on $B(0,\frac{\sqrt{2}}{2^{k-1}})$ imply that there exists $C'>0$ such that we have: $\forall (x,y)\in \widetilde{\Gamma_0}^2,\ |V(D_kx)-V(D_ky)|\leq \frac{C'}{2^k}\|D_kx-D_ky\|$. Since $v_k(x) = V(D_kx) + (\partial_1v)(0)\frac{x_1}{4^k}$, we obtain $|v_k(x)-v_k(y)|\leq \frac{C''}{4^k}\|x-y\|$ for some $C''>0$. Thus  $\big[v_k\big]_{1,\widetilde{\Gamma_0}}\leq \frac{C''}{4^k}$. 

\noindent Now we establish (\ref{deri-v2}) in case $|\beta|\in\{1,\ldots,\lfloor m\rfloor -1\}$. Since all the partial derivatives of order $|\beta| +1$ of $v$ are bounded on $B_{r}$, there exists $M>0$ such that we have for all $(x,y)\in \widetilde{\Gamma_0}^2$: 
\begin{eqnarray*}
|(\partial^{\beta}v_k)(x)-(\partial^{\beta}v_k)(y)|\leq\frac{1}{2^{|\beta|k}}|(\partial^{\beta}v)(D_kx)-(\partial^{\beta}v)(D_ky)| &\leq& M 2^{-|\beta|k} \|D_kx-D_ky\|\\
&\leq& M 4^{-k} \|x-y\|. 
\end{eqnarray*}
This yields (\ref{deri-v2}). The proof of (\ref{deri-v3}) is similar to that of (\ref{deri-theta3}). 
\end{proof}
\begin{alem} \label{deri-1/v}
If $v\in \cC_b^m(B_{r},\C)$ satisfies (\ref{(i)}) and (\ref{(ii)}), then there exist a constant $E>0$ such that, for all $k\geq k_0$, we have  
\begin{subequations}
\begin{eqnarray}
|\beta|\leq \lfloor m\rfloor \quad &\Rightarrow& \quad \big\|\partial^{\beta}(1/v_k)\big\|_{0,\widetilde{\Gamma_0}}\leq E4^k \label{deri-1/v1} \\
|\beta|=\lfloor m\rfloor \quad &\Rightarrow& \quad \big[\partial^{\beta}(1/v_k)\big]_{\tau,\widetilde{\Gamma_0}}\leq E4^k. \label{deri-1/v3}
\end{eqnarray}
\end{subequations}
\end{alem}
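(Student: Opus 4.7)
The idea is to rescale and reduce to a uniform (in $k$) estimate. Set $V_k(x):=4^k\,v_k(x)$ for $x\in\widetilde{\Gamma_0}$. Sublemma~\ref{deri-v} says exactly that $\|V_k\|_{m,\widetilde{\Gamma_0}}$ is bounded by a constant independent of $k\geq k_0$: the factors $4^{-k}$ appearing in (\ref{deri-v1})--(\ref{deri-v3}) are absorbed by the multiplicative $4^k$ in the definition of $V_k$. Moreover, for $x\in\widetilde{\Gamma_0}$ we have $D_k x\in\widetilde{\Gamma_k}$ by (\ref{tile-gamma-k}), so $|w(D_k x)|\geq 1/(8\cdot 4^k)$, and the lower bound in (\ref{(ii)}) gives
\[
|V_k(x)|=4^k\,|v(D_k x)|\geq 4^k\,a\,|w(D_k x)|\geq a/8.
\]
Thus the family $(V_k)_{k\geq k_0}$ is uniformly bounded in $\cC_b^m(\widetilde{\Gamma_0},\C)$ and uniformly bounded away from $0$.

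Given this, it suffices to prove that $1/V_k$ is uniformly bounded in $\cC_b^m(\widetilde{\Gamma_0},\C)$; then (\ref{deri-1/v1}) and (\ref{deri-1/v3}) follow at once from the identity $\partial^{\beta}(1/v_k)=4^k\,\partial^{\beta}(1/V_k)$. For $|\beta|\geq 1$, iterating the elementary rule $\partial_j(1/u)=-(\partial_j u)/u^2$ expresses $\partial^{\beta}(1/V_k)$ as a finite linear combination, with universal coefficients depending only on $\beta$, of terms
\[
\frac{1}{V_k^{N+1}}\prod_{i=1}^{N}\partial^{\gamma_i}V_k,\qquad 1\leq N\leq|\beta|,\ \ |\gamma_i|\geq 1,\ \ \sum_{i=1}^{N}|\gamma_i|=|\beta|.
\]
The sup-norm bound on each such term is immediate from $|V_k|\geq a/8$ and the uniform $\cC^{\lfloor m\rfloor}$-bounds on $V_k$, which yields (\ref{deri-1/v1}) after multiplication by $4^k$.

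For (\ref{deri-1/v3}), the $\tau$-Hölder seminorm of each such term on $\widetilde{\Gamma_0}$ is controlled by applying Lemma~\ref{preli} repeatedly to the product of $2N+1$ factors $\partial^{\gamma_1}V_k,\ldots,\partial^{\gamma_N}V_k$ and $N+1$ copies of $1/V_k$. For each factor one needs both its sup-norm and its $\tau$-Hölder seminorm. The sup-norms are already known; the $\tau$-Hölder seminorm of $\partial^{\gamma_i}V_k$ is given directly by Sublemma~\ref{deri-v}(c) when $|\gamma_i|=\lfloor m\rfloor$, and otherwise is deduced from Sublemma~\ref{deri-v}(b) via the embedding $[f]_{\tau,\widetilde{\Gamma_0}}\leq [f]_{1,\widetilde{\Gamma_0}}\,\mathrm{diam}(\widetilde{\Gamma_0})^{1-\tau}$; the $\tau$-Hölder seminorm of $1/V_k$ is controlled by $|1/V_k(x)-1/V_k(y)|\leq (8/a)^2\,[V_k]_{1,\widetilde{\Gamma_0}}\,\|x-y\|$ combined with the same embedding. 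Assembling these estimates produces a $k$-independent bound on $[\partial^{\beta}(1/V_k)]_{\tau,\widetilde{\Gamma_0}}$.

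The only obstacle is the bookkeeping in the last step: one must keep track of which factors in the product are at the top order $|\gamma_i|=\lfloor m\rfloor$ and which are at strictly lower order, since the two cases are handled by different parts of Sublemma~\ref{deri-v}. Once the rescaling $v_k\mapsto V_k$ has collapsed all the $k$-dependence into the single prefactor $4^k$, the remaining argument is mechanical and yields a constant $E>0$ independent of $k$, giving both (\ref{deri-1/v1}) and (\ref{deri-1/v3}).
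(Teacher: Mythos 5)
Your proof is correct, and it follows a genuinely different route from the paper's. The paper proceeds by induction on $|\beta|$ using the Leibniz identity $\partial^{\gamma}(v_k^{-1}\cdot v_k)=0$, carrying the factors $4^{-k}$ (from Sublemma~\ref{deri-v}) and $4^k$ (from $\|1/v_k\|_{0,\widetilde{\Gamma_0}}$) through three separate inductive steps: first sup-norms (\ref{deri-1/v1}), then an auxiliary Lipschitz estimate (Sublemma~\ref{sub-lem-1/v}), then the top-order H\"older estimate (\ref{deri-1/v3}). Your normalization $V_k:=4^k\,v_k$ collapses all of this $k$-bookkeeping into the single prefactor $4^k=\,$(the $4^k$ in $1/v_k = 4^k/V_k$): after rescaling, Sublemma~\ref{deri-v} says precisely that $\|V_k\|_{m,\widetilde{\Gamma_0}}$ is bounded uniformly in $k$, the lower bound in (\ref{(ii)}) together with $w(D_kx)=w(x)/4^k$ gives $|V_k|\geq a/8$ uniformly, and one then only needs to show that $(1/V_k)_{k\geq k_0}$ is bounded in $\cC_b^m(\widetilde{\Gamma_0},\C)$ with $k$-independent constants. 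Your replacement of the Leibniz induction by the closed-form Fa\`a-di-Bruno expansion of $\partial^{\beta}(1/u)$ is the same computation packaged differently, and your use of the elementary embedding $[f]_{\tau,\widetilde{\Gamma_0}}\leq [f]_{1,\widetilde{\Gamma_0}}\,\mathrm{diam}(\widetilde{\Gamma_0})^{1-\tau}$ replaces the paper's intermediate Sublemma~\ref{sub-lem-1/v}, so that Lemma~\ref{preli} can be applied directly in the $\tau$-H\"older seminorm. The case split you flag — whether $|\gamma_i|=\lfloor m\rfloor$ (only possible when $N=1$, handled by (\ref{deri-v3})) or $|\gamma_i|<\lfloor m\rfloor$ (handled by (\ref{deri-v2}) plus the embedding) — is exactly the bookkeeping needed and is correctly identified. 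In short: same ingredients (Sublemma~\ref{deri-v}, Lemma~\ref{preli}), but the rescaling makes the power of $4^k$ appear once and transparently rather than being tracked at every inductive step; this is a cleaner organization of the same estimate.
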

\begin{proof}{}
From (\ref{(ii)}) and (\ref{tile-gamma-k}), we have  $\|1/v_k\|_{0,\widetilde{\Gamma_0}} \leq \frac{2.4^{k+1}}{a}$. Let $j\in\{1,\ldots,d\}$. From (\ref{deri-v1}) and the previous inequality, we obtain  
$$\mbox{$\big\|\partial_j(\frac{1}{v_k})\big\|_{0,\widetilde{\Gamma_0}} \leq \big(\big\|\partial_jv_k\big\|_{0,\widetilde{\Gamma_0}}\big) \big(\big\|\frac{1}{v_k}\big\|_{0,\widetilde{\Gamma_0}}^2\big) \leq \frac{c}{4^k}\, \frac{4^{2k+3}}{a^2}=\frac{64c}{a^2}4^k$}.$$
Now, let us proceed by induction. Let $\ell\in\{1,\ldots,\lfloor m\rfloor-1\}$, and assume that, for all $\beta\in\N^d$ such that $|\beta|\leq \ell$, there exists $C_{\beta}>0$ such that 
$\big\|\partial^{\beta}(\frac{1}{v_k})\big\|_{0,\widetilde{\Gamma_0}}\leq C_{\beta}\, 4^k$. 
Let $\gamma\in\N^d$ such that $|\gamma|=\ell+1$. Since $\partial^{\gamma}(v_k^{-1}\,.\,v_k)=0$, Leibniz's formula gives:
\begin{equation} \label{Leibniz} 
\mbox{$\partial^{\gamma}(\frac{1}{v_k}) = -\frac{1}{v_k} \sum_{\beta\leq\gamma,\, \beta\neq\gamma} 
\binom{\gamma}{\beta}\partial^{\beta}(\frac{1}{v_k}).\,\partial^{\gamma-\beta}(v_k)$}.
\end{equation}
Thus we have (use the induction hypothesis and (\ref{deri-v1})): 
\begin{eqnarray*}
\mbox{$
\big\|\partial^{\gamma}(\frac{1}{v_k})\big\|_{0,\widetilde{\Gamma_0}}$} &\leq & \mbox{$\big\|\frac{1}{v_k}\big\|_{0,\widetilde{\Gamma_0}} \sum_{\beta\leq\gamma,\, \beta\neq\gamma} \binom{\gamma}{\beta} 
\big(\big\|\partial^{\beta}(\frac{1}{v_k})\big\|_{0,\widetilde{\Gamma_0}}\big) 
\big(\big\|\partial^{\gamma-\beta}(v_k)\big\|_{0,\widetilde{\Gamma_0}}\big)$} \\
&\leq& \mbox{$\frac{2.4^{k+1}}{a}\sum_{\beta\leq\gamma,\, \beta\neq\gamma} \binom{\gamma}{\beta}C_{\beta}4^k\frac{c}{4^k}$} \\
&\leq& \mbox{$\big(\frac{8c}{a}\sum_{\beta\leq\gamma,\, \beta\neq\gamma} \binom{\gamma}{\beta}C_{\beta}\big)\, 4^k.$}
\end{eqnarray*}
The proof of (\ref{deri-1/v1}) is complete. To prove (\ref{deri-1/v3}), we need the following.  
\begin{asublem} \label{sub-lem-1/v}
Under the hypotheses of Lemma~\ref{deri-1/v}, there exist a constant $c>0$ such that, for all $k\geq k_0$, we have
\begin{equation} \label{deri-1/v2}
|\beta|\leq \lfloor m\rfloor-1 \quad \Rightarrow \quad  \big[\partial^{\beta}(1/v_k)\big]_{1,\widetilde{\Gamma_0}}\leq c4^k. 
\end{equation}  
\end{asublem}

\noindent\begin{proof}{} 
If $\beta=0$, then we have for all $(x,y)\in \widetilde{\Gamma_0}^2$ (use (\ref{deri-v2}) and proceed as for (\ref{deri-1/v1})) $$|\frac{1}{v_k(x)}-\frac{1}{v_k(y)}| = \frac{|v_k(x)-v_k(y)|}{|v_k(x)||v_k(y)|} \leq\frac{1}{a^2}4^{2k+3} \big[v_k\big]_{1,\widetilde{\Gamma_0}}\|x-y\| \leq \frac{64c'}{a^2}4^{k} \|x-y\|.$$ Hence $[\frac{1}{v_k}]_{1,\widetilde{\Gamma_0}} \leq 64c'4^{k}/a^2$. Now, let us consider the case $|\beta|=1$. Let $j\in\{1,\ldots,d\}$. By using Lemma~\ref{preli}, (\ref{deri-v1}) (\ref{deri-1/v1}) and the previous inequality, we obtain  
\begin{eqnarray*}
\big[\partial_j\frac{1}{v_k}\big]_{1,\widetilde{\Gamma_0}} = \big[(\partial_jv_k)\cdot\frac{1}{v_k^2}\big]_{1,\widetilde{\Gamma_0}} 
&\leq& \big\|\partial_jv_k\big\|_{0,\widetilde{\Gamma_0}}\, \big[\frac{1}{v_k^2}\big]_{1,\widetilde{\Gamma_0}} + \big[\partial_jv_k\big]_{1,\widetilde{\Gamma_0}}\, \big\|\frac{1}{v_k^2}\big\|_{0,\widetilde{\Gamma_0}}\\
&\leq& \big\|\partial_jv_k\big\|_{0,\widetilde{\Gamma_0}}\, \big(2\, \big\|\frac{1}{v_k}\big\|_{0,\widetilde{\Gamma_0}}\, \big[\frac{1}{v_k}\big]_{1,\widetilde{\Gamma_0}}\big) + \big[\partial_jv_k\big]_{1,\widetilde{\Gamma_0}}\, \big(\big\|\frac{1}{v_k}\big\|_{0,\widetilde{\Gamma_0}}\big)^2 \\
&\leq&2\frac{c}{4^k}d4^k\frac{64c'}{a^2}4^{k}+\frac{c'}{4^k}(d4^k)^2:=c'''4^k. 
\end{eqnarray*}
This gives (\ref{deri-1/v2}) for $|\beta|=1$. To complete the proof of (\ref{deri-1/v2}), let us again proceed by induction. Assume that, for some $\ell\in\{1,\ldots,\lfloor m\rfloor-2\}$ and for all $\beta\in\N^d$ such that  $|\beta|\leq \ell$, there exists $D_{\beta}>0$ such that 
\begin{equation} \label{rec} 
\mbox{$\big[\partial^{\beta}(\frac{1}{v_k})\big]_{_{1,\widetilde{\Gamma_0}}}\leq D_{\beta}4^k$}.
\end{equation}
Let $\gamma\in\N^d$ such that $|\gamma|=\ell+1$, and let $\beta\leq\gamma$ such that $\beta\neq\gamma$. By applying Lemma~\ref{preli}, we get (below $\|\cdot\|_{0}$ and $[\cdot\,]_{1}$ stand for $\|\cdot\|_{0,\widetilde{\Gamma_0}}$ and  $[\cdot\,]_{1,\widetilde{\Gamma_0}}$ respectively) : 
\begin{eqnarray*} 
\big[\frac{1}{v_k}\cdot\,\partial^{\beta}(\frac{1}{v_k})\cdot\partial^{\gamma-\beta}(v_k)\big]_1 &\leq& \big[\frac{1}{v_k}\big]_1\,\big\|\partial^{\beta}(\frac{1}{v_k})\cdot\partial^{\gamma-\beta}(v_k)\big\|_0 + \big\|\frac{1}{v_k}\big\|_0\,\big[\partial^{\beta}(\frac{1}{v_k})\cdot\partial^{\gamma-\beta}(v_k)\big]_1\\
&\leq&\big[\frac{1}{v_k}\big]_1\,\big\|\partial^{\beta}(\frac{1}{v_k})\big\|_0\, \big\|\partial^{\gamma-\beta}(v_k)\big\|_0 \\ 
&\ & + \ \  \big\|\frac{1}{v_k}\big\|_0\bigg(\big[\partial^{\beta}(\frac{1}{v_k})\big]_1\big\|\partial^{\gamma-\beta}(v_k)\big\|_0 + 
\big\|\partial^{\beta}(\frac{1}{v_k})\big\|_0\,\big[\partial^{\gamma-\beta}(v_k)\big]_1\bigg). 
\end{eqnarray*}
From (\ref{deri-v1}) (\ref{deri-v2}) (\ref{deri-1/v1}) and (\ref{rec}) (observe that $|\beta|\leq \ell$), we get    
$\big[\frac{1}{v_k}\, \partial^{\beta}(\frac{1}{v_k})\, \partial^{\gamma-\beta}(v_k)\big]_1\leq L_\beta\, 4^k$ 
for some $L_\beta>0$. From (\ref{Leibniz}), it follows that $[\partial^{\gamma}(\frac{1}{v_k})]_{_{1,\widetilde{\Gamma_0}}}\leq D_{\gamma}4^k$ for some $D_{\gamma}>0$. 
\end{proof}

Finally we prove (\ref{deri-1/v3}). Let $\gamma\in\N^d$ be such that $|\gamma|=\lfloor m\rfloor$. If  $\beta\leq\gamma,\,\beta\neq 0, \beta\neq\gamma$ (thus $|\beta|\leq \lfloor m\rfloor-1$ and $|\gamma-\beta|\leq \lfloor m\rfloor-1$), we have $\big[\frac{1}{v_k}\, \partial^{\beta}(\frac{1}{v_k})\, \partial^{\gamma-\beta}(v_k)\big]_{1,\widetilde{\Gamma_0}} \leq E\, 4^k$ for some $E>0$ (use Lemma~\ref{preli} as above and (\ref{deri-v1}) (\ref{deri-v2}) (\ref{deri-1/v1}) (\ref{deri-1/v2})). Set 
$$s_k= -\frac{1}{v_k} \sum_{\beta\leq\gamma,\,\beta\neq 0, \beta\neq\gamma} 
\binom{\gamma}{\beta}\partial^{\beta}(\frac{1}{v_k})\,\partial^{\gamma-\beta}(v_k).$$
The previous remark shows that $[s_k]_{1,\widetilde{\Gamma_0}} \leq E'\, 4^k$ for some $E'>0$. The same inequality holds for $[s_k]_{\tau,\widetilde{\Gamma_0}}$ because $\widetilde{\Gamma_0}$ is bounded. Finally, from (\ref{Leibniz}), we get $\partial^{\gamma}(\frac{1}{v_k}) = -\frac{1}{v_k^2}\,\partial^{\gamma}v_k+s_k$, and Lemma~\ref{preli} and  (\ref{deri-1/v1}) (\ref{deri-v3}) give $[\partial^{\gamma}v_k/v_k^2]_{_{\tau,\widetilde{\Gamma_0}}} \leq E''\, 4^k$ for some $E''>0$. 
\end{proof}
\begin{arem} \label{rk-m}
Lemmas \ref{deri-theta} and \ref{deri-1/v}, and so Proposition~\ref{deri-theta-sur-v}, hold when $m=1$. Consequently the conclusion of Proposition~\ref{estimat-fourrier-q} is fulfilled as soon as $m > \max(1,(d-1)/2)$. However the condition $m > \max(2,(d-1)/2)$ (i.e.~$m > m_d$) seems to be necessary to prove Proposition~\ref{estimat-fourrier-q-bis}. 
\end{arem} 

\end{document}